\scrollmode

\documentclass{amsart}
\usepackage{amssymb}
\usepackage[all]{xy}

\theoremstyle{plain}
\newtheorem{prop}{Proposition}
\newtheorem{thm}[prop]{Theorem}
\newtheorem{cor}[prop]{Corollary}
\newtheorem{lem}[prop]{Lemma}
\newtheorem{fact}[prop]{Fact}

\newtheorem*{thmA}{Theorem A}
\newtheorem*{corB}{Corollary B}
\newtheorem*{thmC}{Theorem C}
\newtheorem*{thmD}{Theorem D}
\newtheorem*{thmE}{Theorem E}

\theoremstyle{definition}

\theoremstyle{remark}

\newtheorem{rem}[prop]{Remark}

\numberwithin{prop}{section} 
\numberwithin{equation}{section}
\numberwithin{step}{prop}

%%%%Operators-small
\DeclareMathOperator{\op}{op}
\DeclareMathOperator{\ob}{ob}
\DeclareMathOperator{\mor}{mor}
\DeclareMathOperator{\iid}{id}
\DeclareMathOperator{\nat}{nat}
\DeclareMathOperator{\ext}{ext}
\DeclareMathOperator{\Hom}{Hom}
\DeclareMathOperator{\Ext}{Ext}

\DeclareMathOperator{\quot}{quot}
\DeclareMathOperator{\rk}{rk}

\DeclareMathOperator{\gr}{gr}

\DeclareMathOperator{\sat}{sat}
\DeclareMathOperator{\tor}{tor}
\DeclareMathOperator{\hd}{hd}
\DeclareMathOperator{\soc}{soc}

\DeclareMathOperator{\burn}{B}

\DeclareMathOperator{\Tot}{Tot}
\DeclareMathOperator{\fg}{f.g.}
\DeclareMathOperator{\rad}{rad}

\DeclareMathOperator{\glatdim}{Ldim}
\DeclareMathOperator{\gldim}{gldim}
\DeclareMathOperator{\prdim}{proj.dim}

\DeclareMathOperator{\kernel}{ker}
\DeclareMathOperator{\image}{im}
\DeclareMathOperator{\coker}{coker}

\DeclareMathOperator{\rst}{res}
\DeclareMathOperator{\ifl}{inf}
\DeclareMathOperator{\dfl}{def}

\DeclareMathOperator{\Midn}{\mathbf{ind}}
%%%%%OperatorsMaths

\newcommand{\hH}{\hat{H}}

\newcommand{\ttau}{\tilde{\tau}}
\newcommand{\tttau}{\hat{\tau}}
%%%fonts
\newcommand{\ca}[1]{\mathcal{#1}}

%%%%%Fields
\newcommand{\F}{\mathbb{F}}
\newcommand{\N}{\mathbb{N}}
\newcommand{\Z}{\mathbb{Z}}

\newcommand{\RR}{\mathbb{R}}
\newcommand{\II}{\mathbb{I}}

%%%Algebras and Orders
\newcommand{\RG}{R[G]}

%%%Categories
\newcommand{\Mod}{\mathbf{mod}}
\newcommand{\lat}{\mathbf{lat}}

\newcommand{\caC}{\ca{C}}
\newcommand{\caD}{\ca{D}}
\newcommand{\RMod}{{}_R\Mod}
\newcommand{\FMod}{{}_{\F}\Mod}
\newcommand{\Rlat}{{}_R\lat}
\newcommand{\euF}{\mathfrak{F}}

\newcommand{\MC}{\mathfrak{cMF}}
\newcommand{\Gent}{\ca{G}}
\newcommand{\mf}{\ca{M}}

\newcommand{\eut}{\mathfrak{t}}
\newcommand{\eui}{\mathfrak{i}}

%%%Functors
\newcommand{\boF}{\mathbf{F}}
\newcommand{\boG}{\mathbf{G}}
\newcommand{\boP}{\mathbf{P}}
\newcommand{\boJ}{\mathbf{J}}
\newcommand{\boX}{\mathbf{X}}
\newcommand{\tboX}{\tilde{\boX}}
\newcommand{\boY}{\mathbf{Y}}
\newcommand{\boZ}{\mathbf{Z}}
\newcommand{\boS}{\mathbf{S}}
\newcommand{\boR}{\mathbf{R}}
\newcommand{\buT}{\boldsymbol{\Upsilon}}
\newcommand{\boT}{\mathbf{T}}
\newcommand{\boM}{\mathbf{M}}
\newcommand{\bSigma}{\boldsymbol{\Sigma}}
\newcommand{\boB}{\mathbf{B}}
\newcommand{\boC}{\mathbf{C}}
\newcommand{\boK}{\mathbf{K}}
\newcommand{\boQ}{\mathbf{Q}}
\newcommand{\boI}{\mathbf{I}}
\newcommand{\boh}{\mathbf{h}}
\newcommand{\caR}{\ca{R}}

\newcommand{\caL}{\ca{L}}

\newcommand{\boc}{\mathbf{c}}
\newcommand{\bok}{\mathbf{k}}

%%%%tildesandbars
\newcommand{\tchi}{\tilde{\chi}}
\newcommand{\tbeta}{\tilde{\beta}}

%%junk
\newcommand{\miniblacksquare}{\scriptscriptstyle{\blacksquare}}
\newcommand{\eps}{\varepsilon}
\newcommand{\der}{\partial}
\newcommand{\argu}{\underline{\phantom{x}}}

%%%%%%%%%%%%%%%%%%%%%%%%%%%%%%%%%%%%%%%%%
\begin{document}
\title[Lattices and cohomological Mackey functors]{Lattices and cohomological Mackey functors for finite cyclic p-groups}
\author{B. Torrecillas
and Th. Weigel}

\date{\today}

\address{B. Torrecillas\\
Departamento de Algebra y An\'alisis Matem\'atico\\
Universidad de Almer\'ia\\
04071 Almer\'ia, Spain}
\email{ btorreci@ual.es}

\address{Th. Weigel\\
Universit\`a di Milano-Bicocca\\
U5-3067, Via R.Cozzi, 53\\
20125 Milano, Italy}
\email{thomas.weigel@unimib.it}

\thanks{The first author is supported by the grants 
MTM2011-27090 from the ``Ministerio de Ciencia
e Innovacion''  and P07-FQM-0312 from the ``Junta de Andaluca'' (FEDER).}

\begin{abstract}
For a finite cyclic $p$-group $G$ and a discrete valuation domain
$R$ of characteristic $0$ with maximal ideal $pR$
the $\RG$-permutation modules are characterized
in terms
of the vanishing of first degree cohomology on all subgroups
(cf. Thm.~A). As a consequence any $\RG$-lattice can be presented
by $\RG$-permutation modules (cf. Thm.~C).
The proof of these results is based on 
a detailed analysis of the category of cohomological
$G$-Mackey functors with values in the category of $R$-modules.
It is shown that this category has global dimension $3$
(cf. Thm.~E). A crucial step in the proof of Theorem E is the fact
that a gentle $R$-order category (with parameter $p$)
has global dimension less or equal to $2$
(cf. Thm.~D).
\end{abstract}

\subjclass[2010]{18A25, 20C11, 20J06}
\keywords{Cohomological Mackey functors, gentle orders, lattices}

\maketitle

%%%%%%%%%%%%%%%%%%%
%%%% last changes 6/9/2012 %%%%
%%%%%%%%%%%%%%%%%%%

\section{Introduction}
\label{s:intro}
For a Dedekind domain $R$ and a finite group $G$ one calls
a finitely generated left $R[G]$-module $M$ an {\it $R[G]$-lattice}, if
$M$ - considered as an $R$-module - is projective.
In this paper we focus on the study of $R[G]$-lattice, where $R$
is a discrete valuation domain of characteristic $0$ with maximal ideal $pR$ for some
prime number $p$, and $G$ is a finite cyclic $p$-group.
The study of such lattices has a long history and was motivated by a 
promissing result of
F.-E.~Diederichsen (cf. \cite[Thm.~34:31]{CR:met1}, \cite{died:ham}) 
who showed that for the finite cyclic group of order $p$
there are precisely three directly indecomposable such lattices 
up to isomorphism: the trivial $R[G]$-lattice $R$,
the free $R[G]$-lattice $R[G]$, and the augmentation ideal
$\omega_{\RG}=\kernel(R[G]\to R)$.
A similar finiteness result holds for cyclic groups of order $p^2$
(cf. \cite{hr:rep1}).
However, for cyclic $p$-groups of order larger than $p^2$
there will be infinitely many isomorphism types of such lattices;
even worse, in general this classification problem is ``wild''
(cf. \cite{diet:rep1}, \cite{diet:rep2}, \cite{gud:wild}).
If the $R[G]$-lattice $M$ is isomorphic to $R[\Omega]$
for some finite left $G$-set $\Omega$, $M$ 
will be called an {\it $R[G]$-permutation lattice}.
The main purpose of this paper is to establish the following characterization
of $R[G]$-permutation lattices for finite cyclic $p$-groups
(cf. Cor.~\ref{cor:hil90lat1}, Prop.~\ref{prop:elequi}).

\begin{thmA}
Let $R$ be a discrete valuation domain of characteristic $0$ 
with maximal ideal $pR$ for some prime number $p$, 
let $G$ be a finite cyclic $p$-group, and let $M$ be an $R[G]$-lattice.
Then the following are equivalent.
\begin{itemize}
\item[(i)] $M$ is an $R[G]$-permutation lattice,
\item[(ii)] $H^1(U,\rst^G_U(M))=0$ for all subgroups $U$ of $G$,
\item[(iii)] $M_U$ is $R$-torsion free for all subgroups $U$ of $G$,
\end{itemize}
where $M_U=M/\omega_{R[U]}M$ denotes the $U$-coinvariants of $M$.
\end{thmA}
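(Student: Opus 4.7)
The plan is to establish the cycle (i) $\Rightarrow$ (ii), (ii) $\Leftrightarrow$ (iii), (ii) $\Rightarrow$ (i). The first two steps are classical in nature; the last is the substantive content of the theorem and is where the Mackey-functor machinery developed in the body of the paper intervenes.

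For (i) $\Rightarrow$ (ii), by additivity of $H^1$ I reduce to the case $M = R[G/V]$ for some $V \leq G$. Since $G$ is abelian, $\rst^G_U R[G/V]$ decomposes as a direct sum of modules of the form $R[U/W]$ with $W \leq U$, and Shapiro's lemma identifies $H^1(U, R[U/W])$ with $H^1(W, R)$; the latter is $\ker(|W| \cdot : R \to R) = 0$ because $R$ is torsion free. For the equivalence (ii) $\Leftrightarrow$ (iii), I would invoke the Tate exact sequence for the cyclic group $U = \langle g \rangle$,
$$0 \to \hat{H}^{-1}(U, M) \to M_U \xrightarrow{N_U} M^U \to \hat{H}^{0}(U, M) \to 0,$$
together with Tate periodicity $\hat{H}^{-1}(U, M) \cong H^1(U, M)$ for the cyclic group $U$. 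Under (ii), $N_U$ embeds $M_U$ into the torsion-free module $M^U \subseteq M$, giving (iii). Conversely, $H^1(U, M)$ is annihilated by $|U|=p^k$ and is therefore an $R$-torsion module, so its embedding into the torsion-free module $M_U$ (under (iii)) must be zero.

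The implication (ii) $\Rightarrow$ (i) is the heart of the theorem and, as signalled in the introduction, requires the Mackey-functor apparatus. My plan is to assign to $M$ its fixed-point cohomological $G$-Mackey functor $\underline{M}: U \mapsto M^U$ and reinterpret the cohomological vanishing (ii) as a structural condition on $\underline{M}$, namely that it lie in the subcategory generated by the Mackey functors $\underline{R[\Omega]}$ of finite $G$-sets $\Omega$. Using Theorem D (the gentle $R$-order category with parameter $p$ has global dimension at most $2$) and Theorem E (the cohomological Mackey functor category has global dimension $3$), one obtains enough control on projective resolutions to conclude that $\underline{M}$ arises from an $R[G]$-permutation lattice $P$; the lattice-Mackey-functor correspondence then gives $M \cong P$. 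The main obstacle is precisely the translation of the cohomological vanishing condition into a decomposition statement inside the cohomological Mackey functor category, and this is exactly what the technical results of the paper (culminating in Theorem E) are designed to achieve.
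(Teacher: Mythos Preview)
Your treatment of (i) $\Rightarrow$ (ii) and (ii) $\Leftrightarrow$ (iii) is correct and essentially coincides with the paper's: Remark~\ref{rem:projH0} records (i) $\Rightarrow$ (ii) via Shapiro/Nakayama, and Proposition~\ref{prop:elequi} together with Tate periodicity for cyclic $U$ gives exactly your norm-map argument for (ii) $\Leftrightarrow$ (iii).

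For (ii) $\Rightarrow$ (i) you have the right starting move---pass to the fixed-point functor $\boh^0(M)$ and interpret (ii) as the Hilbert$^{90}$ property---but your description of how the machinery is applied inverts the logical dependencies. Theorem~E (global dimension $3$ for $\mf_R(G)$) is \emph{not} an input here; it is a consequence of the same result you are trying to use, namely Theorem~\ref{thm:defcat} (see Theorem~\ref{thm:projdimmac}). So invoking Theorem~E to ``obtain enough control on projective resolutions'' is circular. The actual argument runs as follows: one shows (Proposition~\ref{prop:Ryon}) that Hilbert$^{90}$ for an $R$-lattice functor $\boX$ is equivalent to $\circledast$-acyclicity, and that $\mf_R(G)$ is $\circledast$-symmetric; one shows (Proposition~\ref{prop:defcat}) that Hilbert$^{90}$ functors are $\pi$-acyclic for the unitary projection $\pi$ onto the gentle category; Theorem~D gives the Whitehead property for $\Gent_R(n,p)$; and then the abstract transfer principle Theorem~\ref{thm:whitehead} forces $\dfl^\pi(\boX)$ to be projective. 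A separate lifting step (Propositions~\ref{prop:inj} and~\ref{prop:surmac}, using Proposition~\ref{prop:torhil}) is still needed to promote projectivity of $\dfl^\pi(\boX)$ to an isomorphism $\boX\simeq\boh^0(R[\Omega])$. Your phrase ``lie in the subcategory generated by the $\underline{R[\Omega]}$'' is too weak a conclusion and does not by itself give $M\cong R[\Omega]$; the point is that $\boX$ is genuinely projective, and the permutation-module identification comes from Corollary~\ref{cor:projgent} on the gentle side followed by the lifting argument.
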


By a result of I.~Reiner (cf.  \cite[Thm.~34.31]{CR:met1}, \cite{rei:intcyc}), one knows that there are
$\Z[C_p]$-lattices satisfying (ii),
where $C_p$ is the cyclic group of order $p$,
which are not $\Z[C_p]$-permutation lattices.
Hence the conclusion of Theorem A does not hold for the ring $R=\Z$.

Theorem A has a number of interesting consequences which we would like to
explain in more detail. 
For a finite $p$-group $G$ it is in general quite difficult to decide whether
a given $\RG$-lattice $M$ is indeed an 
$\RG$-permutation lattice.
A sufficient criterion to the just mentioned problem
was given by A.~Weiss in \cite{weiss:rig} for an arbitrary
finite $p$-group $G$ and the ring of $p$-adic integers $R=\Z_p$.
He showed that if for a normal subgroup $N$ of $G$ 
the $\Z_p[G/N]$-module $M^N$ of $N$-invariants is a
$\Z_p[G/N]$-permutation module,
and $\rst^G_N(M)$ is a free
$\Z_p[N]$-module, then $M$ is a $\Z_p[G]$-permutation
module (cf. \cite[Chap.~8, Thm.~2.6]{karp:ind}).
Theorem A extends A.~Weiss' result
for cyclic $p$-groups in the following way (cf. Prop.~\ref{prop:exweiss}).

\begin{corB}
Let $R$ be a discrete valuation domain of characteristic $0$ 
with maximal ideal $pR$ for some prime number $p$, 
let $G$ be a finite cyclic $p$-group, and let $N$ be a normal subgroup
of $G$. Suppose that the $R[G]$-lattice $M$ is satisfying the following
two hypothesis.
\begin{itemize}
\item[(i)] $\rst^G_N(M)$ is an $R[N]$-permutation module, and
\item[(ii)] $M^N$ is an $R[G/N]$-permutation module.
\end{itemize} 
Then $M$ is an $\RG$-permutation module.
\end{corB}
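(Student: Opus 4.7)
The plan is to verify condition (ii) of Theorem~A for the $\RG$-lattice $M$: that $H^1(U,\rst^G_U(M))=0$ for every subgroup $U$ of $G$. Since $G$ is a cyclic $p$-group, its subgroup lattice is a chain, so every subgroup $U$ of $G$ is comparable with $N$. I would therefore split into the two cases $U\leq N$ and $U\geq N$.

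For $U\leq N$, observe that $\rst^G_U(M)=\rst^N_U(\rst^G_N(M))$ is the restriction of an $R[N]$-permutation module and hence is itself an $R[U]$-permutation lattice. The vanishing $H^1(U,\rst^G_U(M))=0$ then follows directly from the implication (i)$\Rightarrow$(ii) of Theorem~A applied to the cyclic $p$-group $U$.

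For $U\geq N$, I would feed the Lyndon--Hochschild--Serre inflation--restriction five-term exact sequence for the normal subgroup $N\trianglelefteq U$,
$$0\to H^1(U/N,M^N)\to H^1(U,\rst^G_U(M))\to H^1(N,\rst^G_N(M))^{U/N},$$
into the two hypotheses, using that $(\rst^G_U(M))^N=M^N$ because $N\leq U$. Hypothesis (i) combined with Theorem~A (applied to $N$ itself, which is a cyclic $p$-group) kills the right-hand term, while hypothesis (ii) combined with Theorem~A (applied to the cyclic $p$-group $G/N$ and its subgroup $U/N$) kills the left-hand term, so the middle term must also vanish.

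Combining the two cases we see that $M$ satisfies condition (ii) of Theorem~A and so is an $\RG$-permutation module. The only real obstacle is the bookkeeping around the inflation--restriction sequence: one must check that Theorem~A can indeed be applied to each of $G/N$ and $N$, and for this it is essential that $G$ is cyclic so that both subgroups and quotients remain cyclic $p$-groups to which Theorem~A applies. Beyond this, no ingredient other than Theorem~A and the five-term exact sequence is required, so the essential content of Corollary~B is already carried by Theorem~A.
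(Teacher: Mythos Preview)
Your proposal is correct and follows essentially the same argument as the paper's proof (Proposition~\ref{prop:exweiss}): split into the cases $U\subseteq N$ and $N\subsetneq U$, in the first case use that the restriction of a permutation module is a permutation module, and in the second case use the inflation--restriction five-term exact sequence together with the two hypotheses to kill both outer terms, then apply Theorem~A. The only cosmetic difference is that where you cite the easy direction (i)$\Rightarrow$(ii) of Theorem~A for the vanishing of $H^1$ on permutation modules, the paper appeals to this directly as a standard fact (cf.\ Remark~\ref{rem:projH0}).
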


Although it seems impossible to describe
all isomorphism types of directly indecomposable $R[G]$-lattices,
where $R$ is a discrete valuation domain of characteristic $0$ 
with maximal ideal $pR$ and
$G$ is a finite cyclic $p$-group,
one can (re)present such lattices in a very natural way (cf. Thm.~\ref{thm:preslat}).

\begin{thmC}
Let $R$ be a discrete valuation domain of characteristic $0$ 
with maximal ideal $pR$ for some prime number $p$, 
let $G$ be a finite cyclic $p$-group, and let $M$ be an $R[G]$-lattice.
Then there exist finite $G$-sets $\Omega_0$ and $\Omega_1$,
and a short exact sequence
\begin{equation}
\label{eq:preslat}
\xymatrix{
0\ar[r]& R[\Omega_1]\ar[r]&R[\Omega_0]\ar[r]&M\ar[r]&0
}
\end{equation}
of $R[G]$-lattices.
\end{thmC}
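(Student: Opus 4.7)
The plan is to construct a surjection from a permutation lattice onto $M$ whose kernel satisfies the cohomological vanishing criterion of Theorem~A, and then invoke Theorem~A a second time to recognise this kernel as itself a permutation lattice.

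For each subgroup $V\leq G$ pick a finite $R$-generating set $S_V$ of the invariants $M^V$ (which is $R$-finitely generated because $M$ is), and put
$$\Omega_0 \;:=\; \bigsqcup_{V\leq G}\ \bigsqcup_{s\in S_V}\ G/V.$$
This is a finite $G$-set since $G$ has only finitely many subgroups. Define an $\RG$-linear map $\phi\colon R[\Omega_0]\to M$ on the summand indexed by a pair $(V,s)$ by sending the basis vector $eV\in R[G/V]$ to $s\in M$. The $(\{e\},s)$-summands, with $s$ ranging over $S_{\{e\}}$, already hit an $R$-generating set of $M$, so $\phi$ is surjective. More importantly, for each $V\leq G$ the induced map $\phi^V\colon R[\Omega_0]^V\to M^V$ is again surjective: since $G$ is abelian, $V$ acts trivially on $G/V$, so the $(V,s)$-summand contributes all of $R[G/V]$ to $R[\Omega_0]^V$, and its image in $M^V$ contains $\phi(eV)=s$. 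Letting $s$ range over $S_V$ exhibits a generating system of $M^V$ inside $\image\phi^V$.

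Let $K:=\kernel\phi$. As an $R$-submodule of the $R$-free module $R[\Omega_0]$, it is $R$-projective, hence an $\RG$-lattice. For each subgroup $V\leq G$ the long exact $V$-cohomology sequence of $0\to K\to R[\Omega_0]\to M\to 0$ contains
$$R[\Omega_0]^V\;\longrightarrow\;M^V\;\longrightarrow\;H^1(V,\rst^G_V K)\;\longrightarrow\;H^1(V,\rst^G_V R[\Omega_0]).$$
The leftmost arrow is surjective by the previous paragraph. The rightmost group vanishes because $R[\Omega_0]$ is a permutation lattice, which forces $H^1(V,\rst^G_VR[\Omega_0])=0$ by the implication (i)$\Rightarrow$(ii) of Theorem~A; this implication is elementary, following from Shapiro's lemma and the vanishing of $\Hom(V_\omega,R)$ for any finite $p$-group stabiliser $V_\omega$ acting trivially on the torsion-free $R$. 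Consequently $H^1(V,\rst^G_V K)=0$ for every subgroup $V$ of $G$, and Theorem~A applied to $K$ produces a finite $G$-set $\Omega_1$ together with an isomorphism $K\cong R[\Omega_1]$. Combined with $\phi$ this yields the short exact sequence claimed by the theorem.

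The substantive content is entirely absorbed into Theorem~A; the one point requiring care is ensuring that $\phi^V$ is surjective for \emph{every} subgroup $V$ simultaneously, which is precisely why separate $R$-generating sets of each $M^V$ must be folded into the construction of $\Omega_0$.
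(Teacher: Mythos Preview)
Your argument is correct. Both your proof and the paper's follow the same overall strategy—surject a permutation lattice onto $M$, show the kernel satisfies the hypothesis of Theorem~A, and apply Theorem~A again—but the implementations differ. The paper passes to the cohomological Mackey functor $\boh^0(M)$, which is of type $H^0$, and invokes its projective-dimension machinery (Proposition~\ref{prop:cycint}(c) together with Theorem~\ref{thm:defcat}, packaged as Theorem~\ref{thm:projdimmac}(c)) to conclude that the kernel of \emph{any} surjection from a projective $R$-lattice Mackey functor onto $\boh^0(M)$ is Hilbert$^{90}$; evaluating at the trivial subgroup then yields the module-level statement. You instead stay at the module level throughout: by building $\Omega_0$ from generators of every $M^V$ you explicitly arrange that $\phi^V$ is surjective for each $V$—which is exactly what surjectivity in the Mackey category would give—and then a single long exact sequence in ordinary group cohomology produces $H^1(V,K)=0$. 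Your route is more self-contained, needing only Theorem~A and Shapiro's lemma; the paper's route situates Theorem~C as a special case of the structural fact that $\boh^0(M)$ has projective dimension at most~$1$ in $\MC_G(\Rlat)$.
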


The proof of Theorem A and Theorem C is based on the theory of
{\it cohomological Mackey functors} for a finite group $G$.
Mackey functors were first introduced by
A.W.M.~Dress in \cite{dress:mac}. Cohomological Mackey functors
satisfy an additional identity (cf. \cite{pw:user}).
The category of 
cohomological $G$-Mackey functors $\MC_G(\RMod)$ with values in the category
of $R$-modules coincides with the category
of contravariant functors of an $R^\circledast$-order category $\mf_R(G)$ (cf. \S\ref{ss:maccat}). 
In case that $G$ is a cyclic $p$-group or order $p^n$, one has a 
{\it unitary projection functor} (cf. \S\ref{ss:funRcat})
\begin{equation}
\label{eq:unipro}
\pi\colon\MC_R(G)\longrightarrow\Gent_R(n,p)
\end{equation}
which can be used to analyze the category $\MC_G(\RMod)$.
Here $\Gent_R(n,p)$ denotes the {\it gentle $R$-order category}
supported on $n+1$ vertices and parameter $p$ (cf. \S\ref{ss:gentordcat})
which can be seen
as an $R$-order version of the gentle algebra $\Gent_\F(n)$ defined over a field $\F$.
The gentle algebra has been
subject to intensive investigations (cf. \cite{gere:gent}), e.g.,
it is well known that $\Gent_\F(n)$ is $1$-Gorenstein 
(resp. $0$-Gorenstein for $n=0$ or $n=1$), but for $n\geq 1$
it is not of finite global dimension.
Hence the following property  
of the gentle $R$-order category is somehow surprising (cf. Thm.~\ref{thm:gldimgent}).

\begin{thmD}
Let $p$ be a prime number, and let
$R$ be a principal ideal domain of characteristic $0$
such that $p.1\in R$ is a prime element. Then
$\gldim(\Gent_R(n,p))= 2$ for $n\geq 2$,
and $\gldim(\Gent_R(n,p))= 1$ for $n=1$ or $0$.
\end{thmD}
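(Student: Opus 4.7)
The plan is to compute $\gldim(\Gent_R(n,p))$ by constructing explicit projective resolutions of the simple contravariant functors and bounding their lengths. Denote by $P_0,\dots,P_n$ the representable projectives associated to the vertices $v_0,\dots,v_n$, and by $S_i$ the simple functor supported at $v_i$ with value $R/pR$ (since $R$ is a PID with $p$ prime, these are exactly the simples). By the standard criterion, $\gldim(\Gent_R(n,p)) = \sup_i \prdim(S_i)$, so it suffices to bound $\prdim(S_i)$.

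I would first dispose of the boundary cases. For $n=0$ the category has a single object whose endomorphism ring is $R$ itself, hence $\gldim = \gldim(R) = 1$. For $n=1$, one can write the category out explicitly and exhibit, for each $i\in\{0,1\}$, a short exact sequence $0 \to P_{1-i} \to P_i \to S_i \to 0$; this gives $\gldim \leq 1$, while the non-projectivity of the $S_i$ yields the opposite inequality.

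For $n\geq 2$ the main step is $\prdim(S_i)\leq 2$. Starting from the projective cover $P_i \twoheadrightarrow S_i$ with kernel $\rad(P_i)$, I would write an explicit projective cover $Q_i \twoheadrightarrow \rad(P_i)$, where $Q_i$ is a direct sum of the representables at the (at most two) neighbouring vertices, together possibly with a copy of $P_i$ to absorb the scalar $p\cdot\iid_{v_i}$. The crucial computation is then to identify the kernel of $Q_i\to \rad(P_i)$ as itself a sum of representables, giving a resolution of length $2$. The point which makes this work is that the gentle-order relations take the form $\alpha\beta = p\cdot\iid = \beta\alpha$ rather than the gentle algebra relations $\alpha\beta = 0$: each syzygetic relation can be paired with a scalar multiplication by $p$, preventing the infinite chain of syzygies that one sees for $\Gent_\F(n)$. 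For the matching lower bound $\gldim(\Gent_R(n,p)) \geq 2$ when $n\geq 2$, I would exhibit an interior simple $S_i$ ($0<i<n$) whose computed second syzygy is a non-zero projective.

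The main obstacle is the verification that the second syzygy is genuinely projective and not a non-trivial submodule of a projective. Concretely, one must check that all generators of the kernel of $Q_i\to\rad(P_i)$ are compatible with the gentle-order relations, so that the corresponding complex of representables is exact in the middle. This amounts to analyzing a small Koszul-like complex built from the two arrows incident to $v_i$ and from multiplication by $p$; the hypothesis that $p$ is a prime element of $R$ enters precisely here, since it ensures that $R/pR$ is a field and forces the relation module to split off as a free summand. The field-coefficient case (in which $p\cdot\iid$ becomes $0$) shows why this argument fails for $\Gent_\F(n)$, and highlights that the $p$-parameter of the order is exactly what produces the finite global dimension.
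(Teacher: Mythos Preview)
Your proposal contains a genuine gap at the very first step. The reduction ``$\gldim(\Gent_R(n,p))=\sup_i\prdim(S_i)$'' is \emph{not} a standard criterion in this setting: the $R$-order $\mu_{\Gent_R(n,p)}$ is not Artinian (indeed $R$ is a domain of characteristic~$0$), so finitely generated modules need not admit finite composition series and one cannot pass from simples to arbitrary modules via the Horseshoe lemma. The identity $\gldim=\sup\prdim(S)$ is standard for semiprimary (e.g.\ Artin) rings, but fails in general for Noetherian rings without further argument. A second, related issue is that your list of simples is incomplete unless $R$ is local: for a general PID $R$ there are simple functors supported at every maximal ideal of $R$, not only at $pR$.

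Your explicit resolutions of the $S_i$ are essentially correct (for interior $i$ one gets $0\to\boP^i\to\boP^{i-1}\oplus\boP^{i+1}\to\boP^i\to\boS^i\to 0$; no extra copy of $\boP^i$ is needed at the middle stage since $p\cdot\iid_i$ already lies in the image of either neighbour), and they do give the lower bound $\gldim\geq 2$ for $n\geq 2$, which is also how the paper argues that inequality. But for the upper bound the paper proceeds differently: it shows $\glatdim_R(\Gent_R(n,p))\leq 1$ by classifying rank-$1$ $R$-lattice functors via their arrow diagrams, giving each an explicit length-$1$ projective resolution, and then filtering an arbitrary $R$-lattice functor by saturated rank-$1$ subfunctors and applying the Horseshoe lemma. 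The passage from $\glatdim\leq 1$ to $\gldim\leq 2$ then uses only that the kernel of a surjection from a projective $R$-lattice functor onto an arbitrary finitely generated functor is again an $R$-lattice functor. In effect, the first syzygies $\rad(\boP^i)$ you compute are particular rank-$1$ lattice functors; what is missing from your outline is the argument that \emph{all} rank-$1$ lattice functors (hence all lattice functors) have projective dimension at most~$1$.
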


In \S\ref{s:secmac} we study {\it section cohomology groups}
which can be associated to any cohomological Mackey functor and any normal
section of a finite group. This allows us to introduce the notion of 
cohomological Mackey functors with the
{\it Hilbert$^{90}$-property}  (cf. \S\ref{ss:h0}).
Theorem~A and Theorem C are a direct consequence
of a more general result which states that 
for a discrete valuation domain $R$ of characteristic~$0$
and maximal ideal $pR$
every cohomological $G$-Mackey functor
with values in the category of $R$-lattices and with the Hilbert$^{90}$ property is projective
(cf. Thm.~\ref{thm:defcat}). 
The proof of this more general result is achieved in two steps. The first step is to show that
the deflation functor associated to $\pi$ (cf. \eqref{eq:unipro})
maps Hilbert$^{90}$ $R$-lattice functors
to projective functors of the gentle $R$-order category. 
The second step is to establish injectivity and surjectivity criteria
which ensure that a given natural transformation $\phi\colon\boX\to\boY$
envolving Hilbert$^{90}$ $R$-lattice functors is indeed an isomorphism
(cf. Prop.~\ref{prop:inj}, Prop.~\ref{prop:surmac}).

The first step is based on a sufficient criterion (cf. Thm.~\ref{thm:whitehead})
which guarantees that the deflation functor associated to a unitary projection $\pi$
is mapping $\circledast$-acyclic $R$-lattice functors to projective
$R$-lattice functors. Here $\circledast$ denotes the {\it Yoneda dual}
(cf. \S\ref{ss:yondual}) which can be seen as the standard dualizing procedure
for $R^\circledast$-categories.
Although this criterion is based on what is usually called
``abstract nonsense'', it will turn out to be quite useful:
two of the three hypothesis one has to claim
can be verified easily for the unitary projection $\pi$ and involve the
Hilbert$^{90}$ property, while the third is a direct consequence of Theorem D.

The two main results known to authors concerning the cohomology of cohomological
Mackey functors are due to S. Bouc (cf. \cite{bouc:com}) and D.~Tambara (cf. \cite{tamb:hom}),
but concern cohomological Mackey functors with values in a field of positive characteristic.
Although the just-mentioned results indicate that for cyclic groups
the theory of cohomological Mackey functors should be 
significantly easier (and different) than in the general case, the following 
consequence is nevertheless surprising.

\begin{thmE}
Let $R$ be a discrete valuation domain of characteristic $0$ 
with maximal ideal $pR$ for some prime number $p$, and 
let $G$ be a non-trivial finite cyclic $p$-group.
Then $\gldim_R(\mf_R(G))=3$.
\end{thmE}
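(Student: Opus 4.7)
The plan is to establish both inequalities $\gldim_R(\mf_R(G))\leq 3$ and $\gldim_R(\mf_R(G))\geq 3$ separately. Writing $|G|=p^n$ with $n\geq 1$, I would first prove the upper bound by exploiting the unitary projection functor $\pi\colon\MC_R(G)\to\Gent_R(n,p)$ of~\eqref{eq:unipro} together with the bound $\gldim(\Gent_R(n,p))\leq 2$ supplied by Theorem~D. Given a finitely generated $R$-module cohomological Mackey functor $\boX$, one chooses a projective cover $\boP_0\twoheadrightarrow\boX$ in $\mf_R(G)$-mod with kernel $\boK$. Projective cohomological $G$-Mackey functors correspond to permutation lattices and in particular enjoy the Hilbert$^{90}$-property, which gives enough structural control on the subfunctor $\boK$ to apply the deflation machinery attached to $\pi$. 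The next step is to argue that $\dfl_\pi(\boK)$ admits a projective resolution of length at most $2$ in $\Gent_R(n,p)$-mod — a direct consequence of Theorem~D — and that this resolution lifts along the inflation functor $\ifl_\pi$ to a projective resolution of $\boK$ in $\mf_R(G)$-mod of the same length. Concatenation with $\boP_0\twoheadrightarrow\boX$ then yields a projective resolution of $\boX$ of length at most $3$, i.e.\ $\prdim(\boX)\leq 3$.

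For the lower bound it suffices to exhibit a single cohomological Mackey functor $\boX$ with $\prdim(\boX)\geq 3$. A natural candidate is a simple residue-type Mackey functor, for instance the cokernel of multiplication by $p$ on an appropriately chosen permutation functor, or a simple functor concentrated at a specific vertex of $\mf_R(G)$. Using the long exact sequences in section cohomology developed in~\S\ref{s:secmac}, one computes the beginning of a minimal projective resolution of such $\boX$ and checks that the second syzygy is a non-projective $R$-lattice functor whose failure to be projective is detected by a non-trivial class in section cohomology on the unique subquotient of order $p$. This obstruction forces a genuine third step in the resolution and hence $\prdim(\boX)\geq 3$.

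The hardest part will be the lifting step in the upper-bound argument: transporting a projective resolution of $\dfl_\pi(\boK)$ back to a projective resolution of $\boK$ in $\mf_R(G)$-mod, without accruing extra homological length from the kernel of $\pi$. This is precisely the setting of Thm~\ref{thm:whitehead}, which identifies the $\circledast$-acyclic $R$-lattice functors whose deflation lands in the projective class of $\Gent_R(n,p)$-mod. Verifying its three hypotheses for the unitary projection $\pi$ is where the Hilbert$^{90}$-machinery of~\S\ref{ss:h0} and Theorem~D — supplying the sharp bound $\gldim(\Gent_R(n,p))\leq 2$ — combine in the essential way, with the remaining two hypotheses reducing to Hilbert$^{90}$-type vanishing statements that are intrinsic to the cyclic $p$-group setting.
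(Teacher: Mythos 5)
Your overall strategy points at the right ingredients (the unitary projection $\pi$, Theorem~D, the Whitehead machinery, the Hilbert$^{90}$ property), but the mechanism you describe for the upper bound has two genuine gaps. First, you take only \emph{one} syzygy $\boK=\kernel(\boP_0\to\boX)$ and hope that the Hilbert$^{90}$ property of $\boP_0$ gives ``enough structural control'' on $\boK$. It does not: a subfunctor of a Hilbert$^{90}$ functor need not be Hilbert$^{90}$, and $\circledast$-acyclicity (equivalently, by Proposition~\ref{prop:Ryon}, the Hilbert$^{90}$ property) is precisely the hypothesis needed to bring Theorem~\ref{thm:whitehead} to bear. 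The paper's Proposition~\ref{prop:cycint} shows that the syzygies improve one step at a time: $\kernel(\der_0)$ is an $R$-lattice functor (hence $i$-injective), $\kernel(\der_1)$ is of type $H^0$, and only $\kernel(\der_2)$ is Hilbert$^{90}$. This three-step ascent is exactly why the answer is $3$; with a single syzygy you cannot start the deflation argument. Second, the ``lifting'' step is not available: the inflation functor $\ifl^\pi$ does \emph{not} send projectives of $\Gent_R(n,p)$ to projectives of $\mf_R(G)$ (one has $\ifl^\pi(\boP^{\pi(W)})=\boQ^W$, a proper quotient of $\boP^W$), so a length-$2$ projective resolution of $\dfl^\pi(\boK)$ does not transport to one of $\boK$. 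What Theorem~\ref{thm:whitehead} actually yields is only that $\dfl^\pi$ of the (third) syzygy is projective downstairs; to conclude that the syzygy itself is projective one needs the full content of Theorem~\ref{thm:defcat}: build a comparison map $\phi\colon\boh^0(R[\Omega])\to\boX$ with $\phi^\pi$ an isomorphism, then prove $\phi$ is split-injective via the $\gr_0$/socle criterion (Prop.~\ref{prop:inj}, using Prop.~\ref{prop:torhil}) and surjective via the head/radical criterion (Prop.~\ref{prop:surmac}). None of this appears in your outline, and it is the heart of the proof.

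For the lower bound you only name candidates without verification. The paper's witness is the torsion functor $\boB^G=\coker(\buT\to\boT)$: splicing the exact sequences \eqref{eq:seccyc2} and \eqref{eq:seccyc1} gives an explicit projective resolution of length $3$, and Fact~\ref{fact:seccyc} identifies $\ext^3_G(\boB^G,\boX)\simeq\boc_0(G/\{1\},\boX)$, which is nonzero for $\boX=\boT$ since $\boc_0(G/\{1\},\boT)=R/|G|R\neq 0$. Your suggestion of a simple functor could in principle work, but you would have to carry out the syzygy computation you allude to; as written the lower bound is not established.
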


%%%%%%%%%%%%%%%%%%%%%%%%%%%
%%%last changes:  6/9/2012 %%%%%%%%%%%
%%%%%%%%%%%%%%%%%%%%%%%%%%%

\section{$R^\circledast$-categories}
\label{s:rord}
Let $R$ be a commutative ring with $1$, and let
$\RMod$ denote the abelian category of $R$-modules.
An $R$-module $M$ will be called an {\it $R$-lattice}, if $M$
is a finitely generated projective $R$-module.
We denote by $\Rlat$ the full subcategory of $\RMod$
the objects of which are $R$-lattices, and by $\RMod^{\fg}$
the full subcategory of $\RMod$ the objects of which are finitely
generated $R$-modules.
For certain applications we have to restrict our considerations to
Dedekind domains.
For such a ring $R$ one has the following property:
If $\phi\colon M\to Q$ is a surjective homomorphism of $R$-lattices, 
then $\ker(\phi)$ is an $R$-lattice
and the canonical map $\ker(\phi)\to M$ is split-injective.

Following \cite[Chap.~2. \S2]{bass:kth} one calls a category $\caC$ an {\it $R$-category},
if $\Hom_\caC(A,B)$ is an $R$-module for any pair of
objects $A,B\in\ob(\caC)$, and composition
\begin{equation}
\label{eq:bilin}
\argu\circ\argu\colon\Hom_\caC(B,C)\times\Hom_\caC(A,B)\longrightarrow\Hom_\caC(A,C)
\end{equation}
is $R$-bilinear for any three objects $A,B,C\in\ob(\caC)$. 
E.g., $\RMod$ is an $R$-category.
Note that $\caC^{\op}$ is an $R$-category for every $R$-category $\caC$.
A (covariant) functor $\phi\colon \caC\to\caD$ between $R$-categories $\caC$ and $\caD$
is called {\it $R$-linear}, if 
\begin{equation}
\label{eq:Rlin}
\phi_{A,B}\colon\Hom_{\caC}(A,B)\longrightarrow\Hom_{\caD}(\phi(A),\phi(B))
\end{equation}
is a homomorphism of $R$-modules for every pair of objects $A,B\in\ob(\caC)$.

%%%%
\subsection{$R^\circledast$-order categories}
\label{ss:Rcat}
An $R$-category $\caC$ will be called an {\it $R$-order category}, if
$\ob(\caC)$ is a finite set and $\Hom_{\caC}(A,B)$ is an $R$-lattice
for all $A,B\in \ob(\caC)$.
E.g., if $\mu$ is an $R$-order, then $\mu\bullet$,
the category with one object $\bullet$ and 
$\Hom_{\mu\bullet}(\bullet,\bullet)=\mu$, is an $R$-order category.
An $R$-category $\caC$ together with an $R$-linear functor 
$\sigma\colon\caC\to\caC^{\op}$ satisfying
$\sigma(A)=A$ for all $A\in\ob(\caC)$ and $\sigma\circ\sigma=\iid_{\caC}$
will be called an {\it $R^\circledast$-category}. E.g., if $\mu$ is an $R$-algebra
with an $R$-linear antipode $\sigma_\mu\colon \mu\to\mu^{\op}$ of order $2$,
i.e., $\sigma\circ\sigma=\iid_{\mu}$,
then $\mu\bullet$ is an $R^\circledast$-category.
An $R^\circledast$-category $(\caC,\sigma)$, where $\caC$ is an $R$-order category,
will be called an {\it $R^\circledast$-order category}.

%%%%
\subsection{Additive functors}
\label{ss:add}
Let $\caC$ be an $R$-category.
By $\euF_R(\caC^{\op},\RMod)$ we denote the category of
$R$-linear functors
from $\caC^{\op}$ to $\RMod$, i.e., $\boF\in\ob(\euF_R(\caC^{\op},\RMod))$
is a contravariant $R$-linear functor from $\caC$ to $\RMod$.
Morphisms in $\euF_R(\caC^{\op},\RMod)$ are given by the
{\it $R$-linear natural transformations}, i.e., $\eta\in\nat_R(\boF,\boG)$
is called {\it $R$-linear},
if $\eta_A\colon\boF(A)\to\boG(A)$ is $R$-linear for every $A\in\ob(\caC)$.
It is well known that $\euF_R(\caC^{\op},\RMod)$ is an abelian category
(cf. \cite[Chap.~IX, Prop.~3.1]{mcl:hom}).

A functor $\boF\in \ob(\euF_R(\caC^{\op},\RMod))$ 
will be called an {\it $R$-lattice functor} if $\boF(A)$ is an $R$-lattice 
for every object $A\in\ob(\caC)$.
By $\euF_R(\caC^{\op},\Rlat)\subseteq\euF_R(\caC^{\op},\RMod)$ 
we denote the full subcategory of $R$-lattice functors. 

Let $(\caC,\sigma)$ be an $R^\circledast$-category, and let
$\argu^\ast=\Hom_R(\argu,R)\colon\Rlat\longrightarrow\Rlat^{\op}$
denote the dualizing functor in $\Rlat$. Composition of $\argu^\ast$  with $\sigma$
yields a dualizing functor
\begin{equation}
\label{eq:equiv}
\argu^\ast\colon
\euF_R(\caC^{\op},\Rlat)\longrightarrow\euF_R(\caC^{\op},\Rlat)^{\op},
\end{equation}
where $\boF^\ast(A)=\boF(A)^\ast$ and $\boF^\ast(\phi)=\boF(\sigma(\phi))^\ast$ for
$\boF\in\ob(\euF_R(\caC^{\op},\Rlat))$ and $\phi\colon A\to B\in\Hom_\caC(A,B)$.

%%%%
\subsection{Projectives}
\label{ss:proj}
Let $\caC$ be an $R$-category, and let $A\in\ob(\caC)$. Then 
\begin{equation}
\label{eq:defP}
\boP^A=\Hom_\caC(\argu,A)\in\ob(\euF_R(\caC^{\op},\RMod))
\end{equation}
is an $R$-linear functor from $\caC^{\op}$ to $\RMod$.
Moreover, if $\caC$ is an $R$-order category, then
$\boP^A$ is an $R$-lattice functor. One has
the following property (cf. \cite[Prop.~IV.7.3]{sten:roq}).

\begin{fact}
\label{fact:Pfirst}
Let $\caC$ be an $R$-category,
let $A\in\ob(\caC)$ and $\boF\in\ob(\euF_R(\caC^{\op},\RMod))$. Then 
one has
a canonical isomorphism
\begin{equation}
\label{eq:nattrans}
\theta_{A,\boF}\colon\nat_R(\boP^A,\boF)\longrightarrow\boF(A)
\end{equation}
given by $\theta_{A,\boF}(\xi)=\xi_A(\iid_A)$, $\xi\in\nat_R(\boP^A,\boF)$.
\end{fact}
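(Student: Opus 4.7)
The plan is to prove this by constructing an explicit inverse to $\theta_{A,\boF}$, following the standard Yoneda lemma template but with care taken for the $R$-linear setting. First I would verify that $\theta_{A,\boF}$ itself is well-defined: for $\xi \in \nat_R(\boP^A, \boF)$ the component $\xi_A\colon \boP^A(A) = \Hom_\caC(A,A) \to \boF(A)$ is an $R$-module homomorphism, so $\xi_A(\iid_A)$ is a genuine element of $\boF(A)$, and $R$-linearity of $\theta_{A,\boF}$ in $\xi$ is immediate from the $R$-linearity of $\xi_A$.

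Next I would construct the candidate inverse. Given $m \in \boF(A)$, I define a family of maps $\xi^m_B\colon \boP^A(B) \to \boF(B)$ for each $B \in \ob(\caC)$ by the rule $\xi^m_B(\phi) = \boF(\phi)(m)$ for $\phi \in \Hom_\caC(B,A)$. The $R$-linearity of $\xi^m_B$ follows from the $R$-linearity of the functor $\boF$ on hom-modules, which is exactly the assumption that $\boF$ is an $R$-linear functor: the map $\phi \mapsto \boF(\phi) \in \Hom_R(\boF(A),\boF(B))$ is $R$-linear, and evaluation at $m$ preserves $R$-linearity. For naturality, given $f\colon C \to B$ in $\caC$, I must check that the square with vertical arrows $\xi^m_B, \xi^m_C$ and horizontal arrows $\boP^A(f) = (\argu)\circ f$ and $\boF(f)$ commutes; this reduces to the identity $\boF(\phi \circ f)(m) = \boF(f)(\boF(\phi)(m))$, which is simply the contravariant functoriality of $\boF$. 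Hence $\xi^m \in \nat_R(\boP^A, \boF)$, and I set $\eta_{A,\boF}(m) = \xi^m$.

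Finally I would check the two composites are identities. In one direction, $\theta_{A,\boF}(\xi^m) = \xi^m_A(\iid_A) = \boF(\iid_A)(m) = m$, since $\boF$ sends identities to identities. In the other direction, starting from $\xi \in \nat_R(\boP^A, \boF)$ and setting $m = \xi_A(\iid_A)$, I need to show $\xi^m = \xi$, i.e., for every $B$ and every $\phi\colon B \to A$, $\xi_B(\phi) = \boF(\phi)(m)$. This follows by applying naturality of $\xi$ to the morphism $\phi\colon B \to A$: the commuting square
\begin{equation*}
\xymatrix{
\boP^A(A) \ar[r]^{\xi_A} \ar[d]_{\boP^A(\phi)} & \boF(A) \ar[d]^{\boF(\phi)} \\
\boP^A(B) \ar[r]^{\xi_B} & \boF(B)
}
\end{equation*}
evaluated at $\iid_A \in \boP^A(A)$ gives $\xi_B(\phi) = \xi_B(\boP^A(\phi)(\iid_A)) = \boF(\phi)(\xi_A(\iid_A)) = \boF(\phi)(m)$, as required.

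I do not expect any serious obstacle: this is essentially the classical Yoneda lemma, and the only point requiring attention is bookkeeping of $R$-linearity, which is built into the definitions of $R$-linear functor and $R$-linear natural transformation. The canonicity of $\theta_{A,\boF}$ is automatic from the construction.
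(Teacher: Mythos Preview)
Your proof is correct and follows the standard Yoneda-lemma template; the paper does not actually prove this fact but cites \cite[Prop.~IV.7.3]{sten:roq} and then records precisely your inverse map (see \eqref{eq:yonnat}) together with the naturality square \eqref{eq:yonnat2}. Your argument is exactly the content behind those formulas.
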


The inverse of $\theta_{A,\boF}$ can be given explicit. For $f\in\boF(A)$ and
$B\in\ob(\caC)$ one has
\begin{equation}
\label{eq:yonnat}
\theta_{A,\boF}^{-1}(f)_B\colon\boP^A(B)\to\boF(B),\quad
\theta_{A,\boF}^{-1}(f)_B(\phi)=\boF(\phi)(f),\quad
\phi\in\Hom_\caC(B,A).
\end{equation}
It is straightforward to verify that $\theta_{A,\boF}^{-1}(f)\in\nat_R(\boP^A,\boF)$.
Let $\chi\colon\boF\to\boG$ be an $R$-linear natural transformation.
Then one has a commutative diagram
\begin{equation}
\label{eq:yonnat2}
\xymatrix{
\boF(A)\ar[d]_{\chi_A}\ar[r]^-{\theta_{A,\boF}^{-1}}&\nat_R(\boP^A,\boF)\ar[d]^{\chi\circ-}\\
\boG(A)\ar[r]^-{\theta_{A,\boG}^{-1}}&\nat_R(\boP^A,\boG).
}
\end{equation}
From this fact one concludes the following well known property (see \cite[Cor.~7.5]{sten:roq}).

\begin{fact}
\label{fact:yon}
Let $\caC$ be an $R$-category,
let $\boF,\boG\in\ob(\euF_R(\caC^{\op},\RMod))$, and let $\phi\in\nat_R(\boF,\boG)$
be a natural transformation such that $\phi_A\colon \boF(A)\to\boG(A)$ is surjective. 
Then for every natural transformation $\chi\in\nat_R(\boP^A,\boG)$ there exists  
$\tchi\in\nat_R(\boP^A,\boF)$ making the diagram
\begin{equation}
\label{eq:diayon}
\xymatrix{
&\boP^A\ar[d]^{\chi}\ar@{-->}[dl]_{\tchi}&\\
\boF\ar[r]^{\phi}&\boG\ar[r]&0
}
\end{equation}
commute. In particular, $\boP^A\in\ob(\euF_R(\caC^{\op},\RMod))$ is projective.
\end{fact}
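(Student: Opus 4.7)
The plan is to apply the Yoneda-type bijection of Fact~\ref{fact:Pfirst} at both $\boF$ and $\boG$, together with the naturality square \eqref{eq:yonnat2}. First, I identify the given natural transformation $\chi\in\nat_R(\boP^A,\boG)$ with the element $g:=\theta_{A,\boG}(\chi)=\chi_A(\iid_A)\in\boG(A)$. Since $\phi_A\colon\boF(A)\to\boG(A)$ is surjective by hypothesis, I can choose $f\in\boF(A)$ with $\phi_A(f)=g$ and define the candidate lift by
\[
\tchi:=\theta_{A,\boF}^{-1}(f)\in\nat_R(\boP^A,\boF).
\]

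To verify that $\phi\circ\tchi=\chi$, the cleanest route is to invoke the naturality square \eqref{eq:yonnat2} with the ambient transformation being $\phi\colon\boF\to\boG$ (playing the role of $\chi$ in that diagram), and evaluated at $f\in\boF(A)$. This yields
\[
\phi\circ\tchi=\phi\circ\theta_{A,\boF}^{-1}(f)=\theta_{A,\boG}^{-1}(\phi_A(f))=\theta_{A,\boG}^{-1}(g)=\chi,
\]
as required. A direct check using the explicit formula \eqref{eq:yonnat} is equally painless: for any $B\in\ob(\caC)$ and $\psi\in\Hom_\caC(B,A)$ one has $(\phi\circ\tchi)_B(\psi)=\phi_B(\boF(\psi)(f))=\boG(\psi)(\phi_A(f))=\boG(\psi)(g)=\chi_B(\psi)$, the second equality being naturality of $\phi$ and the last being \eqref{eq:yonnat} applied to $\chi$ with $g=\chi_A(\iid_A)$.

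To deduce that $\boP^A$ is projective, I would observe that $\euF_R(\caC^{\op},\RMod)$ is a functor category into an abelian category, so epimorphisms are detected pointwise: if $\phi\colon\boF\to\boG$ is an epimorphism, then each component $\phi_A$ is surjective in $\RMod$, and the lifting property just established applies. There is really no obstacle here beyond notational bookkeeping — the entire argument is a dressed-up version of the classical Yoneda observation that representables are projective. The only point of care is to distinguish the two uses of the symbol $\chi$, namely the generic natural transformation in the diagram \eqref{eq:yonnat2} versus the transformation to be lifted in the statement, and to read \eqref{eq:yonnat2} as precisely the assertion that the Yoneda bijection $\theta_{A,-}$ is natural in its second variable.
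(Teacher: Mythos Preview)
Your proof is correct and follows exactly the approach the paper indicates: the paper does not give an explicit proof of Fact~\ref{fact:yon} but simply says that it follows from the commutative diagram~\eqref{eq:yonnat2} (together with a reference), and you have spelled out precisely that deduction. There is nothing to add.
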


As a consequence one has the following.

\begin{fact}
\label{fact:enproj}
Let $\caC$ be an $R$-category such that $\ob(\caC)$ is a set.
Then $\euF_R(\caC^{\op},\RMod)$ is an abelian category with enough
projectives.
\end{fact}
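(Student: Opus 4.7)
The plan is to mimic the classical module-theoretic construction: for an arbitrary $\boF \in \ob(\euF_R(\caC^{\op},\RMod))$, I would build an epimorphism onto $\boF$ from a coproduct of representable functors $\boP^A$, each of which is already known to be projective by Fact~\ref{fact:yon}.

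First, for each $A \in \ob(\caC)$ and each element $f \in \boF(A)$, I would invoke Fact~\ref{fact:Pfirst} (concretely, the explicit formula \eqref{eq:yonnat}) to produce a natural transformation $\tau^A_f := \theta_{A,\boF}^{-1}(f) \colon \boP^A \to \boF$ whose $A$-component sends $\iid_A$ to $f$. Since $\ob(\caC)$ is a set and each $\boF(A)$ is a set, I may form the coproduct
\begin{equation*}
\boP \;=\; \bigoplus_{A \in \ob(\caC)}\; \bigoplus_{f \in \boF(A)} \boP^A_{(f)},
\end{equation*}
with one copy of $\boP^A$ per element $f \in \boF(A)$. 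Coproducts in $\euF_R(\caC^{\op},\RMod)$ are computed pointwise in $\RMod$, so $\boP$ is a well-defined $R$-linear contravariant functor, and the universal property of the coproduct assembles the family $\{\tau^A_f\}$ into a single natural transformation $\pi \colon \boP \to \boF$.

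Next I would verify that $\pi$ is an epimorphism. Exactness in $\euF_R(\caC^{\op},\RMod)$ is tested pointwise, so it suffices to check surjectivity of $\pi_B$ at each $B \in \ob(\caC)$, and this is immediate: every $g \in \boF(B)$ equals $\tau^B_g(\iid_B)$, and hence lies in the image of the inclusion of the summand $\boP^B_{(g)}$ composed with $\pi$.

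The last step is projectivity of $\boP$. Each summand $\boP^A_{(f)}$ is projective by Fact~\ref{fact:yon}, and in any abelian category with arbitrary coproducts a coproduct of projectives is projective: the canonical identification $\nat_R(\bigoplus_i \boP_i,\argu) \cong \prod_i \nat_R(\boP_i,\argu)$ converts a lifting problem against the coproduct into a product of independent lifting problems, each solvable by projectivity of the corresponding summand. I do not foresee any genuine obstacle here; the statement is essentially a Yoneda-theoretic packaging of the standard ``free module covers a module'' argument, and the smallness hypothesis on $\ob(\caC)$ is precisely what is needed to keep the indexing coproduct a legitimate set-indexed colimit.
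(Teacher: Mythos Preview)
Your argument is correct and is precisely the standard Yoneda-based construction the paper has in mind: the paper does not spell out a proof but simply records Fact~\ref{fact:enproj} as an immediate consequence of Fact~\ref{fact:yon}, and your coproduct-of-representables construction is exactly how one cashes that out.
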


If $\caC$ is an $R$-category and $\ob(\caC)$ is a set,
we denote for $\boF\in\ob(\euF_R(\caC^{\op},\RMod))$
the right derived functors of
$\nat_R(\argu,\boF)$ by $\ext_R^k(\argu,\boF)$, $k\geq 0$.

Let $\caC$ be an $R$-order category. Then by definition
$\boP^A$ is an $R$-lattice functor.
A projective object $\boP\in\ob(\euF(\caC^{\op},\RMod))$
which is a lattice functor will be a called a {\it projective 
$R$-lattice functor}. For these functors one concludes the following:

\begin{fact}
\label{fact:rlatproj}
Let $R$ be a Dedekind domain, and let
$\caC$ be an $R$-order category.
Then every $R$-lattice functor $\boF\in\ob(\euF(\caC^{\op},\RMod))$
has a projective resolution $(\boP_k,\partial_k^\boP,\eps_\boF)$,
where $\boP_k$ is a projective $R$-lattice functor for every $k\geq 0$.
\end{fact}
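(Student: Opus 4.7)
The plan is to build the resolution one step at a time, using the Yoneda-type functors $\boP^A$ as the building blocks and verifying that at each stage the syzygy is again an $R$-lattice functor. The Dedekind hypothesis is what makes the inductive step work: it ensures that the kernel of a surjection of $R$-lattices is itself an $R$-lattice.

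First I would construct the augmentation $\boP_0\to\boF$. Since $\caC$ is an $R$-order category, $\ob(\caC)$ is finite and each $\boF(A)$ is an $R$-lattice, hence a finitely generated $R$-module. For every $A\in\ob(\caC)$ pick a finite generating system $f_{A,1},\ldots,f_{A,n_A}\in\boF(A)$, and use Fact~\ref{fact:Pfirst} to obtain corresponding natural transformations $\theta_{A,\boF}^{-1}(f_{A,j})\colon\boP^A\to\boF$. Assembling them gives a natural transformation
\[
\eps_\boF\colon\boP_0=\bigoplus_{A\in\ob(\caC)}\bigoplus_{j=1}^{n_A}\boP^A\longrightarrow\boF,
\]
and evaluating at any object $B\in\ob(\caC)$ shows that $(\eps_\boF)_B$ is surjective because its image already contains the chosen generators of $\boF(B)$. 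Moreover $\boP_0$ is a finite direct sum of the functors $\boP^A$, each of which is projective by Fact~\ref{fact:yon} and an $R$-lattice functor by definition of an $R$-order category (and because $\Hom_\caC(B,A)$ is an $R$-lattice for every $B$). Hence $\boP_0$ is a projective $R$-lattice functor.

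Next I would verify that the kernel $\boK_0=\kernel(\eps_\boF)$ is again an $R$-lattice functor. Fix $A\in\ob(\caC)$ and consider the short exact sequence of $R$-modules
\[
0\longrightarrow\boK_0(A)\longrightarrow\boP_0(A)\stackrel{(\eps_\boF)_A}{\longrightarrow}\boF(A)\longrightarrow 0.
\]
Both $\boP_0(A)$ and $\boF(A)$ are $R$-lattices; by the property of Dedekind domains stated at the beginning of Section~\ref{s:rord}, the kernel $\boK_0(A)$ is then itself an $R$-lattice (and the sequence even splits $R$-linearly). Since this holds for every $A$, the functor $\boK_0$ lies in $\euF_R(\caC^{\op},\Rlat)$.

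Applying the construction of the previous step to $\boK_0$ in place of $\boF$ produces a projective $R$-lattice functor $\boP_1$ together with a surjection $\boP_1\twoheadrightarrow\boK_0$, whose kernel is, by the same Dedekind argument, again an $R$-lattice functor. Iterating gives an infinite projective resolution $(\boP_k,\partial_k^\boP,\eps_\boF)$ in which each $\boP_k$ is a projective $R$-lattice functor. The only real subtlety, and the step that would fail for a general commutative base ring $R$, is the closure of $R$-lattice functors under taking kernels of surjections onto $R$-lattice functors; here the Dedekind hypothesis is used essentially.
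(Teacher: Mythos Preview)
Your argument is correct and is exactly the one the paper has in mind: the statement is recorded as a Fact without proof, but it is set up by the preceding remarks (the $\boP^A$ are projective $R$-lattice functors, and for Dedekind $R$ the kernel of a surjection of $R$-lattices is again an $R$-lattice), and your iterated-syzygy construction is the intended justification.
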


\begin{rem}
\label{rem:rep} 
Let $\caC$ be an $R$-order category such that 
for all $A,B\in\ob(\caC)$, $A\not= B$, one has $A\not\simeq B$.
Let $\mu_\caC$ be the $R$-order given by
$\textstyle{\mu_\caC=\bigoplus_{A,B\in\ob(\caC)}\Hom_\caC(A,B)}$,
where the product is given by 
\begin{equation}
\label{eq:defmuC}
\alpha\cdot\beta=
\begin{cases}
\alpha\circ\beta&\ \text{for $B_1=B_2$,}\\
\hfil 0\hfil &\ \text{for $B_1\not=B_2$.}
\end{cases}
\end{equation}
for $\alpha\in\Hom_\caC(B_2,C)$, $\beta\in\Hom_\caC(A,B_1)$.
Then one has a canonical $R$-linear functor $\rho_\caC\colon\caC\to\mu_\caC\bullet$ (cf. \S\ref{ss:Rcat})
induced by the identity on morphisms.
Moreover, the category $\euF_R(\caC^{\op},\RMod)$ is naturally equivalent
to the category of right $\mu_{\caC}$-modules $\Mod_{\mu_{\caC}}$.
This equivalence is achieved by assigning a right $\mu_\caC$-module
$M$ the functor $\boF_M\in \ob(\euF_R(\caC^{\op},\RMod))$ given by
$\boF_M(A)=M\cdot\iid_A$ for $A\in\ob(\caC)$.
For $\phi\in\Hom_\caC(A,B)$ the mapping 
 $\boF_M(\phi)\colon\boF_M(B)\to\boF_M(A)$
is given by right multiplication with $\phi$.
A functor $\boF\in\ob(\euF_R(\caC^{\op},\RMod))$ can be made into a
right $\mu_\caC$-module $M_\boF$, where $M_\boF=\bigoplus_{A\in\ob(\caC)}\boF(A)$.
For $f\in\boF(B)$ and $\phi\in\Hom_{\caC}(A,B)$ one has
$f\cdot\phi=\boF(\phi)(f)$.

For $A\in\ob(\caC)$, $\iid_A$ is an idempotent in $\mu_{\caC}$.
Moreover, under the identification mentioned above $\boP^A$ corresponds
to the right $\mu_{\caC}$-module $\iid_A\cdot\mu_{\caC}$.
\end{rem}

%%%%
\subsection{Dimensions}
\label{ss:Ldim}
Let $R$ be a Dedekind domain, let
$\caC$ be an $R$-order category, and let $\boF\in\ob(\euF_R(\caC^{\op},\RMod))$. 
Then $\boF$ has {\it projective $R$-dimension less or equal to $d$} if it has a
projective resolution $(\boP_k,\partial_k^\boP,\eps_\boF)$
with $\boP_k=0$ for $k>d$.
The minimal such number $d\in\N_0\cup\{\infty\}$ 
is called the {\it projection $R$-dimension} of $\boF$ and will be denoted
by $\prdim(\boF)$. The numbers
\begin{equation}
\label{eq:gldim}
\begin{aligned}
\gldim_R(\caC)&=\sup\{\,\prdim_R(\boF)\mid \boF\in\ob(\euF_R(\caC^{\op},\RMod))\,\},\\
\glatdim_R(\caC)&=\sup\{\,\prdim_R(\boF)\mid \boF\in\ob(\euF_R(\caC^{\op},\Rlat))\,\},
\end{aligned}
\end{equation}
will be called the {\it global $R$-dimension} and the {\it global $R$-lattice dimension}
of $\caC$, respectively. 
By a result of M.~Auslander, one has
\begin{equation}
\label{eq:gldim2}
\gldim_R(\caC)=\sup\{\,\prdim_R(\boF)\mid \boF\in\ob(\euF_R(\caC^{\op},\RMod^{\fg}))\,\}
\end{equation}
(cf. \cite[Thm.~9.12]{rotman:hom}).
In particular,
\begin{equation}
\label{eq:dims}
\glatdim_R(\caC)\leq\gldim_R(\caC)\leq\glatdim_R(\caC)+1.
\end{equation}
E.g., $\glatdim_R(\caC)=0$ if, and only if,
every $R$-lattice functor is projective.
An $R$-order category satisfying $\glatdim_R(\caC)\leq 1$ will be called {\it pseudo-hereditary}.
Such a category has the following property: Any subfunctor $\boF$ of a projective
$R$-lattice functor $\boP$ such that $\boP/\boF$ is an $R$-lattice functor is projective.

%%%%%
\subsection{The Yoneda dual}
\label{ss:yondual}
Let $(\caC,\sigma)$ be an $R^\circledast$-category.
For $\phi\in\Hom_{\caC}(A,B)$ one has 
an $R$-linear natural transformation $\boP(\phi)\colon\boP^A\to\boP^B$ 
given by composition
with $\phi$. 
Hence one has a functor
\begin{equation}
\label{eq:cdash}
\argu^\circledast\colon
\euF_R(\caC^{\op},\RMod)\longrightarrow \euF_R(\caC^{\op},\RMod)^{\op},
\end{equation}
given by $\boF^\circledast(A)=\nat_R(\boF,\boP^A)$
for $\boF\in\ob(\euF_R(\caC^{\op},\RMod))$ and $A\in\ob(\caC)$,
and $\boF^\circledast(\phi)=\boP(\sigma(\phi))\circ\argu\colon\boF^\circledast(B)\to\boF^\circledast(A)$
for  $\phi\colon A\to B\in\Hom_{\caC}(A,B)$.
We call the functor $\argu^\circledast$
the {\it Yoneda dual}. 

\begin{rem}
\label{rem:yondual}
Let $\mu$ be an $R$-algebra with $R$-linear antipode $\sigma\colon\mu\to\mu^{\op}$.
Then $\euF_R(\mu\bullet^{\op},\RMod)$ can be identified with the category of right $\mu$-modules
(cf. Rem.~\ref{rem:rep}).
Under this identification, the Yoneda dual satisfies 
$\argu^\circledast=\Hom_\mu(\argu,\mu)^\times$. Here we used the symbol ${}^\times$
to express that for a right $\mu$-module $M$, the left $\mu$-module $\Hom_\mu(M,\mu)$
is considered as right $\mu$-module via the map $\sigma$.
\end{rem}

The Yoneda dual has the following property:

\begin{prop}
\label{prop:yonproj}
Let $(\caC,\sigma)$ be an $R^\circledast$-category, and let $A\in\ob(\caC)$.
Then one has a canonical natural isomorphism
$j_A\colon (\boP^A)^\circledast\to\boP^A$
which is natural in $A$, i.e., for $\psi\colon A\to D\in\Hom_\caC(A,D)$ one has 
a commutative diagram
\begin{equation}
\label{eq:isoyon0}
\xymatrix{
(\boP^D)^\circledast\ar[d]_{\boP(\psi)^\circledast}\ar[r]^{j_D}&
\boP^D\ar[d]^{\boP(\sigma(\psi))}\\
(\boP^A)^\circledast\ar[r]^{j_A}&\boP^A.
}
\end{equation}
In particular, if $R$ is a Dedekind domain and $(\caC,\sigma)$ is an $R^\circledast$-order category,
then $\argu^\circledast$ maps projective $R$-lattice functors
to projective $R$-lattice functors, and $R$-lattice functors to $R$-lattice functors.
\end{prop}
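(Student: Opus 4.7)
The plan is to construct $j_A$ explicitly via the Yoneda isomorphism from Fact~\ref{fact:Pfirst} twisted by the involution $\sigma$, and then read off the lattice/projectivity conclusions from the resulting isomorphism. Concretely, for each $B\in\ob(\caC)$ I would define
\[
j_A(B)\colon (\boP^A)^\circledast(B)=\nat_R(\boP^A,\boP^B)\longrightarrow \Hom_\caC(B,A)=\boP^A(B),\qquad \eta\longmapsto \sigma\bigl(\eta_A(\iid_A)\bigr).
\]
Since $\eta\mapsto\eta_A(\iid_A)$ is the Yoneda isomorphism $\theta_{A,\boP^B}$ and $\sigma$ is a bijection of order $2$, each $j_A(B)$ is an $R$-linear isomorphism.

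Next I would check that $\{j_A(B)\}_{B}$ assembles to a natural transformation of contravariant functors in $B$. For $\phi\colon B\to C$ in $\caC$ and $\gamma\in\nat_R(\boP^A,\boP^C)$, the definition of $\argu^\circledast$ gives $(\boP^A)^\circledast(\phi)(\gamma)=\boP(\sigma(\phi))\circ\gamma$, so $j_A(B)$ applied to this is $\sigma\bigl(\sigma(\phi)\circ\gamma_A(\iid_A)\bigr)$. Using that $\sigma$ reverses composition and is involutive, this simplifies to $\sigma(\gamma_A(\iid_A))\circ\phi$, which is exactly $\boP^A(\phi)\bigl(j_A(C)(\gamma)\bigr)$. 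Naturality in $A$ is an analogous verification: for $\psi\colon A\to D$ and $\gamma\in\nat_R(\boP^D,\boP^B)$, naturality of $\gamma$ applied to $\psi$ yields $\gamma_A(\psi)=\gamma_D(\iid_D)\circ\psi$, and applying $\sigma$ shows $j_A(B)\circ\boP(\psi)^\circledast(B)=\boP(\sigma(\psi))(B)\circ j_D(B)$, which is precisely the commutativity of \eqref{eq:isoyon0}.

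For the ``in particular'' clause, the isomorphism $j_A$ immediately transports the $R$-lattice and projectivity properties of $\boP^A$ onto $(\boP^A)^\circledast$. To treat a general $R$-lattice functor $\boF$, I would observe that $\boF^\circledast(A)=\nat_R(\boF,\boP^A)$ embeds into the finite product $\prod_{B\in\ob(\caC)}\Hom_R(\boF(B),\Hom_\caC(B,A))$, which is a finitely generated $R$-module because $\ob(\caC)$ is finite and all the involved Hom-modules are $R$-lattices; since $R$ is Dedekind and the ambient module is torsion-free, $\boF^\circledast(A)$ is itself a lattice. Finally, any projective $R$-lattice functor $\boP$ is a direct summand of a finite direct sum $\bigoplus_{i}\boP^{A_i}$ (one surjects by choosing finite $R$-generators of each $\boP(A)$, which is possible since $\boP(A)$ is a lattice and $\ob(\caC)$ is finite, then lifts by projectivity), so $\boP^\circledast$ is a direct summand of $\bigoplus_i(\boP^{A_i})^\circledast\cong\bigoplus_i\boP^{A_i}$ and hence projective.

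The bulk of the argument is really just bookkeeping around the Yoneda lemma, so the only place where care is genuinely required is keeping the contravariance of $\argu^\circledast$ and the order-reversing action of $\sigma$ aligned in the naturality squares; the rest follows formally. The mildly substantive step is the reduction of a projective $R$-lattice functor to a finite sum of representables, which uses both the Dedekind hypothesis (to ensure the needed finite generation and torsion-freeness are preserved) and finiteness of $\ob(\caC)$.
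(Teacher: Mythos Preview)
Your proposal is correct and follows essentially the same approach as the paper: define $j_A(B)$ as the composite of the Yoneda isomorphism $\theta_{A,\boP^B}$ with $\sigma$, then verify the two naturality squares by direct computation using that $\sigma$ is an order-reversing involution. The paper dismisses the ``in particular'' clause as straightforward, whereas you spell out the reduction of a projective $R$-lattice functor to a summand of a finite sum of representables and the embedding of $\boF^\circledast(A)$ into a finite product of Hom-lattices; both elaborations are correct and in the spirit of what the paper intends.
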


\begin{proof} Let $\phi\colon B\to C$ be a morphism in $\caC$. By the definition
of $\boP^{\argu}$ and Fact~\ref{fact:Pfirst}, one has
canonical isomorphisms
\begin{equation}
\label{eq:isoyon1}
\xymatrix@R=.3truecm{
(\boP^A)^\circledast(B)\ar@{=}[d]\ar[0,3]^{j_A(B)}&&&\boP^A(B)\ar@{=}[d]\\
\nat_R(\boP^A,\boP^B)\ar[r]^-{\theta_{A,\boP^B}}&\boP^B(A)\ar@{=}[r]&\Hom_\caC(A,B)\ar[r]^{\sigma}&\Hom_{\caC}(B,A).}
\end{equation}
and the diagram
\begin{equation}
\label{eq:isoyon2}
\xymatrix{
\nat_R(\boP^A,\boP^C)\ar[r]\ar[d]_{\boP(\sigma(\phi))\circ-}
&\Hom_\caC(A,C)\ar[r]^{\sigma}\ar[d]^{\sigma(\phi)\circ-}
&\Hom_{\caC}(C,A)\ar[d]^{-\circ\phi}\\
\nat_R(\boP^A,\boP^B)\ar[r]&\Hom_\caC(A,B)\ar[r]^{\sigma}&\Hom_{\caC}(B,A)\\
}
\end{equation}
commutes. This shows that $j_A$ is a natural isomorphism.
The commutativity of the diagram
\begin{equation}
\label{eq:isoyon3}
\xymatrix{
\nat_R(\boP^D,\boP^B)\ar[r]\ar[d]_{-\circ\boP(\psi)}
&\Hom_\caC(D,B)\ar[r]^{\sigma}\ar[d]^{-\circ\psi}
&\Hom_{\caC}(B,D)\ar[d]^{\sigma(\psi)\circ-}\\
\nat_R(\boP^A,\boP^B)\ar[r]&\Hom_\caC(A,B)\ar[r]^{\sigma}&\Hom_{\caC}(B,A)\\
}
\end{equation}
shows the commutativity of \eqref{eq:isoyon0}. 
The final remark is straightforward.
\end{proof}

Let $A,B\in\ob(\caC)$ and let $\boF\in\ob(\euF_R(\caC^{\op},\RMod))$. For 
any $\chi\in\nat_R(\boF,\boP^B)$ one has an $R$-linear map
\begin{equation}
\label{eq:eta0}
\sigma\circ\chi_A\colon\xymatrix{
\boF(A)\ar[r]^{\chi_A}&\boP^B(A)\ar[r]^{\sigma_{A,B}}&\boP^A(B)}.
\end{equation}
Let $f\in\boF(A)$, and let $\eta^{f,B}_{\boF,A}\colon\nat_R(\boF,\boP^B)\longrightarrow\boP^A(B)$ be given by
\begin{equation}
\label{eq:eta1}
\eta^{f,B}_{\boF,A}(\chi)=\sigma(\chi_A(f)).
\end{equation}
For $\phi\colon B\to C\in\Hom_\caC(B,C)$ one has a commutative diagram
\begin{equation}
\label{eq:eta2}
\xymatrix{
\nat_R(\boF,\boP^C)\ar[r]^-{\eta^{f,C}_{\boF,A}}\ar[d]_{\boP(\sigma(\phi))\circ-}
&\boP^A(C)\ar[d]^{-\circ\phi}\\
\nat_R(\boF,\boP^B)\ar[r]^-{\eta^{f,B}_{\boF,A}}&\boP^A(B).\\
}
\end{equation}
Hence $\eta^{f,-}_{\boF,A}\colon\boF^\circledast\to\boP^A$ is an $R$-linear natural transformation. 
The mapping
\begin{equation}
\label{eq:eta3}
\eta_{\boF,A}\colon \boF(A)\longrightarrow\nat_R(\boF^\circledast,\boP^A)
\end{equation}
is $R$-linear, and for $\psi\colon D\to A\in\Hom_\caC(D,A)$, the diagram
\begin{equation}
\label{eq:eta4}
\xymatrix{
\boF(A)\ar[d]_{\boF(\psi)}\ar[r]^-{\eta_{\boF,A}}&
\nat_R(\boF^\circledast,\boP^A)\ar[d]^{\boP(\sigma(\psi))\circ-}\\
\boF(D)\ar[r]^-{\eta_{\boF,D}}&\nat_R(\boF^\circledast,\boP^D)
}
\end{equation}
commutes. Thus it defines an $R$-linear, natural transformation 
$\eta_\boF\colon\boF\to\boF^{\circledast\circledast}$.
Let $\alpha\in\nat_R(\boF,\boG)$. For all $A\in\ob(\caC)$ one has a commutative
diagram
\begin{equation}
\label{eq:eta5}
\xymatrix{
\boF(A)\ar[d]_{\alpha_A}
\ar[r]^-{\eta_{\boF,A}}&\nat_R(\boF^\circledast,\boP^A)\ar[d]^{-\circ\alpha^\circledast}\\
\boG(A)\ar[r]^-{\eta_{\boG,A}}&\nat_R(\boG^\circledast,\boP^A).
}
\end{equation}
Hence one has the following.

\begin{prop}
\label{prop:isoeta}
Let $(\caC,\sigma)$ be an $R^\circledast$-category. Then 
\begin{equation}
\label{eq:nateta}
\eta\colon\iid_{\euF_R(\caC^{\op},\RMod)}\longrightarrow\argu^{\circledast\circledast}
\end{equation}
is a natural transformation.
For every $E\in\ob(\caC)$, $\eta_{\boP^E}\colon\boP^E\to(\boP^E)^{\circledast\circledast}$
is a natural isomorphism. In particular, if $R$ is a Dedekind domain and
$(\caC,\sigma)$ is an $R^\circledast$-order category,
then $\eta_{\boP}\colon \boP\to\boP^{\circledast\circledast}$
is an isomorphism for every projective $R$-lattice functor $\boP\in\ob(\euF_R(\caC^{\op},\RMod))$.
\end{prop}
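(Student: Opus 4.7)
The statement has three assertions, and I would attack them in the order they appear.

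First, naturality of $\eta$. The two commutative squares \eqref{eq:eta4} and \eqref{eq:eta5} that immediately precede the proposition already do the job: \eqref{eq:eta4} shows that for each fixed $\boF$ the collection $\{\eta_{\boF,A}\}_{A\in\ob(\caC)}$ is a natural transformation $\boF\to\boF^{\circledast\circledast}$, and \eqref{eq:eta5} shows that these natural transformations are themselves natural in $\boF$. So the first assertion is essentially a book-keeping verification that the squares glue correctly; no new idea is needed.

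Second, the representable case. Fix $E\in\ob(\caC)$ and $A\in\ob(\caC)$. I would unwind $\eta_{\boP^E,A}$ using the Yoneda identification of Fact~\ref{fact:Pfirst}: for $\chi\in(\boP^E)^\circledast(B)=\nat_R(\boP^E,\boP^B)$ set $g=\theta_{E,\boP^B}(\chi)=\chi_E(\iid_E)\in\Hom_\caC(E,B)$. A direct computation with \eqref{eq:eta1} then gives, for $f\in\boP^E(A)=\Hom_\caC(A,E)$,
\begin{equation*}
\eta^{f,B}_{\boP^E,A}(\chi)=\sigma(g\circ f)=\sigma(f)\circ\sigma(g),
\end{equation*}
and by the description of $j_E$ in \eqref{eq:isoyon1} the factor $\sigma(g)$ is exactly $j_E(B)(\chi)$. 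Thus $\eta_{\boP^E,A}(f)=\boP(\sigma(f))\circ j_E$, so $\eta_{\boP^E,A}$ factors as the composition of the bijections
\begin{equation*}
\Hom_\caC(A,E)\xrightarrow{\sigma}\Hom_\caC(E,A)\xrightarrow{\theta^{-1}_{E,\boP^A}}\nat_R(\boP^E,\boP^A)\xrightarrow{-\circ j_E}\nat_R((\boP^E)^\circledast,\boP^A),
\end{equation*}
where the last arrow is an iso because $j_E$ is, by Proposition~\ref{prop:yonproj}. Hence $\eta_{\boP^E}$ is a natural isomorphism. The only subtlety here is keeping the various $\sigma$'s and Yoneda identifications straight; there is no real obstacle.

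Third, extension to projective $R$-lattice functors. Since $\ob(\caC)$ is finite (by definition of an $R$-order category) and $\boP(A)$ is a finitely generated $R$-module for each $A$, Remark~\ref{rem:rep} identifies $\boP$ with a finitely generated projective right $\mu_\caC$-module; such a module is a direct summand of some $\bigoplus_{i=1}^n \iid_{A_i}\cdot\mu_\caC$, i.e., $\boP$ is a direct summand of a finite coproduct $\bigoplus_i\boP^{A_i}$. The functors $\argu^\circledast$ and hence $\argu^{\circledast\circledast}$ commute with finite coproducts (the former turns coproducts into products, but finite products and coproducts coincide in $\euF_R(\caC^{\op},\RMod)$), and $\eta$ is additive by naturality, so $\eta_{\bigoplus_i\boP^{A_i}}=\bigoplus_i\eta_{\boP^{A_i}}$ is an isomorphism by the second part. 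Since ``being an isomorphism'' is preserved by retracts, $\eta_{\boP}$ is an isomorphism as well. The only thing that is not entirely formal is the finite-generation argument, but it follows immediately from Remark~\ref{rem:rep} once one notes that $\ob(\caC)$ is finite and each $\boP(A)$ is an $R$-lattice.
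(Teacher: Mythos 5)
Your argument is correct and follows essentially the same route as the paper: the factorization $\eta_{\boP^E,A}=(-\circ j_E)\circ\theta^{-1}_{E,\boP^A}\circ\sigma$ that you derive by direct computation is exactly the commutative square \eqref{eq:eta6} on which the paper's proof rests, and your appeal to \eqref{eq:eta4}--\eqref{eq:eta5} for naturality matches the discussion preceding the proposition. The only difference is that you spell out the reduction of a projective $R$-lattice functor to a retract of a finite coproduct of representables, which the paper leaves implicit; that step is standard and your justification via Remark~\ref{rem:rep} is sound.
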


\begin{proof}
It suffices to show that $\eta_{\boP^E}\colon\boP^E\to(\boP^E)^{\circledast\circledast}$
is a natural isomorphism for every $E\in\ob(\caC)$.
For $A\in\ob(\caC)$ one has a commutative diagram
\begin{equation}
\label{eq:eta6}
\xymatrix@C=2.0truecm{
\boP^A(E)\ar[r]^-{\theta^{-1}_{E,\boP^A}}\ar[d]_{\sigma}&\nat_R(\boP^E,\boP^A)\ar[d]^{-\circ j_E}\\
\boP^E(A)\ar[r]^-{\eta_{\boP^E,A}}&\nat_R((\boP^E)^\circledast,\boP^A),
}
\end{equation}
and all maps apart from $\eta_{\boP^E,A}$ are isomorphisms (cf. \eqref{eq:yonnat}, \eqref{eq:isoyon1}).
Hence $\eta_{\boP^E,A}$ is an isomorphism, and this yields the claim.
\end{proof}

%%%%
\subsection{Derived functors of the Yoneda dual}
\label{ss:deryon}
Let $(\caC,\sigma)$ be an $R^\circledast$-category such that $\ob(\caC)$ is a set.
Then $\euF_R(\caC^{\op},\RMod)$ is an abelian category with enough
projectives (cf. Fact~\ref{fact:enproj}).

The Yoneda dual $\argu^\circledast\colon \euF_R(\caC^{\op},\RMod)\to
\euF_R(\caC^{\op},\RMod)^{\op}$ is additive and left-exact.
Let $\caR^k(\argu)^\circledast$, $k\geq 1$, denote its right-derived functors, i.e.,
one has that
\begin{equation}
\label{eq:rightyo}
\caR^k(\boF)^\circledast(A)=\ext_R^k(\boF,\boP^A),\qquad
\text{for $\boF\in\ob(\euF_R(\caC^{\op},\RMod))$,}
\end{equation}
and $\caR^k(\boF)^\circledast(\phi)=\ext_R^k(\boF,\boP(\sigma(\phi)))$
for $\phi\in\Hom_\caC(A,B)$.
A functor $\boF$ will be called {\it $\circledast$-acyclic},
if $\caR^k(\boF)^\circledast=0$ for all $k>0$. E.g., every projective functor 
is $\circledast$-acyclic.

Let $R$ be a Dedekind domain, and let $(\caC,\sigma)$ be an
$R^\circledast$-order category.
An $R$-lattice functor $\boF\in\ob(\euF_R(\caC^{\op},\RMod))$
will be called {\it $\circledast$-bi-acyclic}, if $\boF$ and
$\boF^\circledast$ are $\circledast$-acyclic.
The $R^\circledast$-order category $(\caC,\sigma)$ will be called {\it $\circledast$-symmetric},
if every $\circledast$-acyclic $R$-lattice functor is $\circledast$-bi-acyclic.

%%%%
\subsection{Gorenstein projective functors}
\label{ss:compproj}
Let $R$ be a Dedekind domain, let $(\caC,\sigma)$ be an
$R^\circledast$-order category, and let 
$\boF\in\ob(\euF_R(\caC^{\op},\Rlat))$.
A chain complex $(\boP_\bullet,\partial_\bullet^\boP)$ together with a
natural transformation $\eps\colon \boP_0\to\boF$ will be called a
{\it complete projective $R$-lattice functor resolution} of $\boF$, if
\begin{itemize}
\item[(i)] $\boP_k$ is a projective $R$-lattice functor for all $k\in\Z$;
\item[(ii)] $(\boP_\bullet,\partial_\bullet^\boP)$ is exact;
\item[(iii)] $\eps\circ\partial_1=0$ and $\eps$ induces a natural isomorphism $\tilde{\eps}\colon\coker(\partial_1)\to\boF$.
\end{itemize}
For an exact chain complex of projective $R$-lattice functors $(\boP_\bullet,\partial_\bullet^\boP)$ we
denote by $(\boQ_\bullet,\partial_\bullet^\boQ)=(\boP_\bullet,\partial_\bullet^\boP)^\circledast$ the chain complex of
projective $R$-lattice functors given by
$\boQ_k=\boP^\circledast_{-k-1}$ and $\partial_k^\boQ=(\partial_{-k}^\boP)^\circledast$.
Note that by Proposition~\ref{prop:isoeta}, $(\boP_\bullet,\partial_\bullet^\boP)^{\circledast\circledast}$
is canonically isomorphic to $(\boP_\bullet,\partial_\bullet^\boP)$.
The complete projective $R$-lattice functor resolution $(\boP_\bullet,\partial_\bullet^\boP,\eps_\boF)$
of $\boF$ will be called {\it $\circledast$-exact} if
\begin{itemize}
\item[(iv)] $(\boP_\bullet,\partial_\bullet^\boP)^\circledast$ is exact.
\end{itemize}
An $R$-lattice functor with a $\circledast$-exact complete
projective $R$-lattice functor resolution is also called
a {\it Gorenstein projective functor}.
One has the following property.

\begin{prop}
\label{prop:explores}
Let $R$ be a Dedekind domain, let $(\caC,\sigma)$ be an
$R^\circledast$-order category, and let 
$\boF\in\ob(\euF_R(\caC^{\op},\Rlat))$ be an $R$-lattice functor.
Then $\boF$ is Gorenstein projective if, and only if,
$\boF$ is $\circledast$-bi-acyclic.
\end{prop}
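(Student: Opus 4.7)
My plan is to prove the two implications separately, with the substantive work lying in the \emph{if} direction. For Gorenstein projective $\Rightarrow$ $\circledast$-bi-acyclic, I would extract from the given $\circledast$-exact complete projective lattice resolution $(\boP_\bullet,\partial_\bullet^\boP,\eps)$ two ordinary projective lattice resolutions. By condition~(iii) the positive half $\cdots\to\boP_1\to\boP_0\to\boF\to 0$ is already a projective lattice resolution of $\boF$, so $\ext^k(\boF,\boP^A)$ is computed as $H^k$ of the truncation $\boP_0^\circledast(A)\to\boP_1^\circledast(A)\to\cdots$, which vanishes for $k\geq 1$ by the $\circledast$-exactness~(iv). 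For $\boF^\circledast$ I would use the exact negative half $0\to\boF\to\boP_{-1}\to\boP_{-2}\to\cdots$ (exact because $\boF\cong\image(\partial_0)$ and $\boP_\bullet$ is exact), which dualises via~(iv) to a projective lattice resolution $\cdots\to\boP^\circledast_{-2}\to\boP^\circledast_{-1}\to\boF^\circledast\to 0$; computing $\ext^k(\boF^\circledast,\boP^A)$ against this resolution and invoking the isomorphism $\boP^{\circledast\circledast}\cong\boP$ of Proposition~\ref{prop:isoeta} reduces to the homology of a sub-complex of $\boP_\bullet$, which is zero in positive degrees, so $\boF^\circledast$ is $\circledast$-acyclic.

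For the reverse direction, assume $\boF$ and $\boF^\circledast$ are $\circledast$-acyclic. I would choose, via Fact~\ref{fact:rlatproj}, projective lattice resolutions $\boP^+_\bullet\to\boF$ and $\boQ_\bullet\to\boF^\circledast$. Applying $\argu^\circledast$ to the first and using $\circledast$-acyclicity of $\boF$ gives an exact coresolution $0\to\boF^\circledast\to(\boP^+_0)^\circledast\to(\boP^+_1)^\circledast\to\cdots$ of $\boF^\circledast$ by projective lattice functors, and similarly $\circledast$-acyclicity of $\boF^\circledast$ produces an exact coresolution $0\to\boF^{\circledast\circledast}\to\boQ_0^\circledast\to\boQ_1^\circledast\to\cdots$. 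I would then assemble the complete resolution by setting $\boP_k=\boP^+_k$ for $k\geq 0$, $\boP_{-k-1}=\boQ_k^\circledast$ for $k\geq 0$, keeping the respective internal differentials, and defining the connecting map $\partial_0\colon\boP^+_0\to\boQ_0^\circledast$ as the composite $\boP^+_0\twoheadrightarrow\boF\xrightarrow{\eta_\boF}\boF^{\circledast\circledast}\hookrightarrow\boQ_0^\circledast$. Granted that $\eta_\boF$ is an isomorphism, exactness of the spliced complex at the splice point is immediate and $\coker(\partial_1)\cong\boF$ holds from the original resolution; the $\circledast$-exactness then follows by applying $\argu^\circledast$ once more, since, using Proposition~\ref{prop:isoeta}, the positive half becomes the resolution $\cdots\to\boQ_1\to\boQ_0\to\boF^\circledast\to 0$ and the negative half becomes the coresolution $0\to\boF^\circledast\to(\boP^+_0)^\circledast\to\cdots$ from the first step, spliced along $\boF^\circledast$.

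The crux is therefore to verify that $\eta_\boF\colon\boF\to\boF^{\circledast\circledast}$ is a natural isomorphism. My approach is dimension-shifting along the syzygies $\boZ_k=\image(\partial^{\boP^+}_k)$ of the chosen resolution, which are $R$-lattice functors by the Dedekind hypothesis and which inherit $\circledast$-acyclicity from $\boF$ via the standard identity $\ext^n(\boZ_k,\boP^A)\cong\ext^{n+k}(\boF,\boP^A)$. Dualising the short exact sequences $0\to\boZ_{k+1}\to\boP^+_k\to\boZ_k\to 0$ then yields $0\to\boZ_k^\circledast\to(\boP^+_k)^\circledast\to\boZ_{k+1}^\circledast\to 0$; the associated long exact $\ext$-sequence, combined with naturality of $\eta$ and with the isomorphism $\eta_{\boP^+_k}$ from Proposition~\ref{prop:isoeta}, identifies $\kernel(\eta_\boF)$ and $\coker(\eta_\boF)$ with specific $\ext$-terms attached to $\boZ_1^\circledast$ and its cosyzygies. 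Showing that these obstructions vanish, by propagating $\circledast$-acyclicity of $\boF^\circledast$ through the projective lattice coresolution obtained in the first part, is the only step where genuine work beyond routine splicing is required, and once it is established the construction above delivers the desired Gorenstein projective resolution.
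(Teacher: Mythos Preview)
Your treatment of the implication ``Gorenstein projective $\Rightarrow$ $\circledast$-bi-acyclic'' is correct and matches the paper's argument.

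For the converse, your construction diverges from the paper's, and the divergence is precisely where you locate ``the crux'': you splice $\boP^+_\bullet\to\boF$ with $0\to\boF^{\circledast\circledast}\to\boQ_0^\circledast\to\cdots$ through the map $\eta_\boF\colon\boF\to\boF^{\circledast\circledast}$, and therefore need $\eta_\boF$ to be an isomorphism. The paper avoids this entirely. It first splices on the dual side, forming a complete resolution $\boS_\bullet$ of $\boF^\circledast$ by joining a projective resolution $\boR_\bullet\to\boF^\circledast$ with the coresolution $0\to\boF^\circledast\to(\boP_0)^\circledast\to(\boP_1)^\circledast\to\cdots$ (the dual of a resolution of $\boF$); here the splice is through the identity on $\boF^\circledast$, so no reflexivity question arises. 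Then it sets $\boT_\bullet=\boS_\bullet^\circledast$ and defines the augmentation $\rho\colon\boT_0=\boP_0^{\circledast\circledast}\to\boF$ as $\eps\circ\eta_{\boP_0}^{-1}$, using only that $\eta$ is an isomorphism on \emph{projectives} (Proposition~\ref{prop:isoeta}). Exactness of $\boT_\bullet$ and its $\circledast$-dual are then read off from $\circledast$-acyclicity of $\boF$ and of $\boF^\circledast$ respectively.

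Your sketched verification that $\eta_\boF$ is an isomorphism has a gap. Dimension-shifting along $0\to\boZ_{k+1}\to\boP_k^+\to\boZ_k\to 0$ does give $\circledast$-acyclicity of every $\boZ_k$, and dualising yields short exact sequences $0\to\boZ_k^\circledast\to(\boP_k^+)^\circledast\to\boZ_{k+1}^\circledast\to 0$. But propagating $\circledast$-acyclicity of $\boF^\circledast=\boZ_0^\circledast$ \emph{forward} along these only gives $\caR^m(\boZ_k^\circledast)^\circledast=0$ for $m\geq 2$; the obstruction $\coker(\eta_\boF)\cong\caR^1(\boZ_1^\circledast)^\circledast$ sits exactly in the degree that is not controlled. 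You have not explained how to kill it, and the naive shift does not. (The statement $\eta_\boF\cong\iid$ is in fact true for $\circledast$-bi-acyclic $\boF$, but the clean way to see it is \emph{a posteriori}, once the paper's construction has produced the complete resolution.) The fix is simply to adopt the paper's order of operations: splice on the $\boF^\circledast$ side first, then dualise.
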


\begin{proof}
Suppose that $\boF$ is $\circledast$-bi-acyclic.
By Fact~\ref{fact:rlatproj}, $\boF$ has a projective resolution 
$(\boP_\bullet,\partial_\bullet^\boP,\eps_\boF)$
by projective $R$-lattice functors. Let $\boQ_{k}=\boP^\circledast_{-k}$,
$\partial^\boQ_{k}=(\partial_{1-k}^\boP)^\circledast$. Then $(\boQ_\bullet,\partial^\boQ_\bullet)$
is a chain complex of projective $R$-lattice functors concentrated in non-positive degrees
(cf. Prop.~\ref{prop:yonproj}). As $\boF$ is $\circledast$-acyclic, one has
\begin{equation}
\label{eq:comres1}
H_k(\boQ_\bullet,\partial^\boQ_\bullet)\simeq
\begin{cases}
\boF^\circledast&\ \text{for $k=0$;}\\
\hfil 0\hfil&\ \text{for $k\not=0$.}
\end{cases}
\end{equation}
As $\boF^\circledast$ is an $R$-lattice functor, it has
a  projective resolution 
$(\boR_\bullet,\partial_\bullet^\boR,\mu_{\boF^\circledast})$
by projective $R$-lattice functors. Let $(\boS_\bullet,\partial_\bullet^\boS)$
be the chain complex given by
$\boS_k=\boR_k$ for $k\geq 0$ and $\boS_k=\boQ_{k+1}$ for $k<0$, and mappings
$\partial_k^\boS=\partial_k^\boR$ for $k\geq 1$,
$\partial_k^\boS=\partial_{k+1}^\boQ$ for $k\leq -1$, and 
$\partial_0=\eps_{\boF}^\circledast\circ\mu_{\boF^\circledast}$.
Then $(\boS_\bullet,\partial_\bullet^\boS,\mu_{\boF^\circledast})$
is a complete projective $R$-lattice functor resolution of $\boF^\circledast$.

Let $(\boT_\bullet,\partial_\bullet^\boT)=(\boS_\bullet,\partial_\bullet^\boS)^\circledast$. 
Then one has
$\boT_0=\boP_0^{\circledast\circledast}$, 
$\rho=\eps\circ\eta_{\boP_0}^{-1}\colon\boT_0\to\boF$ (cf. Prop.~\ref{prop:isoeta})
is satisfying $\rho\circ\partial_1^\boT=0$, and the induced map 
$\tilde{\rho}\colon\coker(\partial_1^\boT)\to\boF$ is a natural isomorphism.
By construction and Proposition~\ref{prop:isoeta}, 
one has $H_k(\boT_\bullet,\partial_\bullet^\boT)=0$ for $k>0$.
As $\boF^\circledast$ is $\circledast$-acyclic, one has also
$H_k(\boT_\bullet,\partial_\bullet^\boT)=0$ for $k<-1$. 

Let $\boT^{<0}_\bullet$ and $\boT^{\geq0}_\bullet$ denote the truncated
chain complexes, respectively, and
consider the short exact sequence of chain complexes
$0\to\boT^{<0}_\bullet\to\boT_\bullet\to\boT^{\geq0}_\bullet\to 0$. 
By construction, the connecting homomorphism
$H_0(\delta)\colon H_0(\boT^{\geq0}_\bullet)\to H_{-1}(\boT^{<0}_\bullet)$
is an isomorphism. The long exact sequence in homology implies that
the chain complex $(\boT_\bullet,\partial_\bullet^\boT)$ has trivial homology,
and hence is exact.
Thus by Proposition~\ref{prop:yonproj}, $(\boT_\bullet,\partial_\bullet^\boT,\rho)$
is a complete projective $R$-lattice functor resolution of $\boF$.
As $(\boT_\bullet,\partial_\bullet^\boT)^\circledast$ is canonically isomorphic to
$(\boS_\bullet,\partial_\bullet^\boS)$, $(\boT_\bullet,\partial_\bullet^\boT,\rho)$
is also $\circledast$-exact.

Let $(\boT_\bullet,\partial_\bullet^\boT,\eps_{\boF})$ be a 
$\circledast$-exact complete projective $R$-lattice functor
resolution of $\boF$.
Then
\begin{equation}
\label{eq:yonres1}
\caR^k(\boF)^\circledast=H_{-k-1}((\boT_\bullet,\partial_\bullet^\boT)^\circledast)=0,\ \ k>0,
\end{equation}
i.e., $\boF$ is $\circledast$-acyclic. Replacing the chain complex $(\boT_\bullet,\partial_\bullet^\boT)$
by the chain complex $(\boT_\bullet,\partial_\bullet^\boT)^\circledast$ shows that
$\boF^\circledast$ is also $\circledast$-acyclic. 
\end{proof}
%%%%

\subsection{Gorenstein $R^\circledast$-order categories}
\label{ss:gor}
Let $R$ be a Dedekind\footnote{For an arbitrary commutative ring $R$ with $1$ the kernel of a surjective
homomorphism $\phi\colon M\to Q$ of $R$-lattices is not necessarily an $R$-lattice.
This is the reason why we restrict all subsequent considerations to $R$-order categories
over a Dedekind domain $R$.} domain, and let
$(\caC,\sigma)$ be an $R^\circledast$-order category. For $A\in\ob(\caC)$
the functors
\begin{equation}
\label{eq:defJ}
\boJ^A=(\boP^A)^\ast\in\ob(\euF_R(\caC^{\op},\RMod))
\end{equation}
are $R$-lattice functors which are {\it relative injective} in $\euF_R(\caC^{\op},\Rlat)$
in the following sense: Let $\alpha\colon\boF\to\boG$ be a split-injective,
$R$-linear transformation of $R$-lattice functors, and let $\beta\colon \boF\to\boJ^A$
be any $R$-linear natural transformation. Then there exists an $R$-linear natural
transformation $\tbeta\colon\boG\to\boJ^A$ such that the diagram
\begin{equation}
\label{eq:relinj}
\xymatrix{
0\ar[r]&\boF\ar[r]^\alpha\ar[d]_{\beta}&\boG\ar@{-->}[dl]^{\tbeta}\\
&\boJ^A&
}
\end{equation}
commutes.
Here we called a natural transformation $\alpha\colon\boF\to\boG$ 
of $R$-lattice functors {\it split injective},
if it is injective and $\coker(\alpha_B)$ is an $R$-lattice for every $B\in\ob(\caC)$.
The $R^\circledast$-order category $(\caC,\sigma)$ is called {\it $m$-Gorenstein}, $m\geq 0$, if
$\prdim(\boJ^A)\leq m$ for all $A\in\ob(\caC)$.
A $0$-Gorenstein $R^\circledast$-order category is also called {\it Frobenius}.
For $m$-Gorenstein $R^\circledast$-order categories one has the following:

\begin{prop}
\label{prop:gor}
Let $R$ be a Dedekind domain, and let $(\caC,\sigma)$ be an $m$-Gorenstein 
$R^\circledast$-order category.
Then for any $\boF\in\ob(\euF_R(\caC^{\op},\Rlat))$ and $k> m$ one has
$\caR^k(\boF)^\circledast=0$.
\end{prop}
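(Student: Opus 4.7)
My plan is to establish first an auxiliary vanishing lemma, namely that
$\ext_R^k(\boF,\boJ^B)=0$ for every $k\geq 1$, every $B\in\ob(\caC)$, and every
$R$-lattice functor $\boF\in\ob(\euF_R(\caC^{\op},\Rlat))$; then I would bootstrap this
to the desired vanishing $\ext_R^k(\boF,\boP^A)=0$ for $k>m$ by dualising a finite projective
resolution of $\boJ^A$.

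For the auxiliary lemma, I start from a projective resolution of $\boF$ by projective
$R$-lattice functors, provided by Fact~\ref{fact:rlatproj}. Since $R$ is a Dedekind domain,
the hereditary property recalled at the start of \S\ref{s:rord} ensures that every syzygy in
this resolution is again an $R$-lattice functor, so that every short exact sequence
$0\to\boK'\to\boQ\to\boK\to 0$ appearing in it is split-injective in the sense of
\S\ref{ss:gor}. The relative injectivity of $\boJ^B$ with respect to such morphisms then
guarantees surjectivity of $\nat_R(\boQ,\boJ^B)\to\nat_R(\boK',\boJ^B)$, which combined with
the long exact Ext-sequence forces $\ext_R^1(\boK,\boJ^B)=0$. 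Iterated dimension shifting
delivers $\ext_R^k(\boF,\boJ^B)=0$ for all $k\geq 1$.

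The second step exploits the $m$-Gorenstein hypothesis. Combining $\prdim(\boJ^A)\leq m$
with Fact~\ref{fact:rlatproj} and the Dedekind property, $\boJ^A$ admits a finite projective
resolution
\begin{equation*}
0\longrightarrow\boP_m\longrightarrow\cdots\longrightarrow\boP_0\longrightarrow\boJ^A\longrightarrow 0
\end{equation*}
by projective $R$-lattice functors. Evaluated at any $B\in\ob(\caC)$ it becomes an exact
sequence of $R$-lattices which splits objectwise; hence applying the dualising functor
$\argu^\ast$ of \S\ref{ss:add} term-by-term yields the exact coresolution
\begin{equation*}
0\longrightarrow\boP^A\longrightarrow\boP_0^\ast\longrightarrow\cdots\longrightarrow\boP_m^\ast\longrightarrow 0,
\end{equation*}
after identifying $(\boJ^A)^\ast\cong\boP^A$ via $\argu^{\ast\ast}\cong\iid$ on $R$-lattice
functors. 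Each $\boP_i$ is a direct summand of a finite direct sum of representables, so each
$\boP_i^\ast$ is a summand of a finite direct sum of functors of the form $\boJ^B$, and the
auxiliary lemma forces $\ext_R^k(\boF,\boP_i^\ast)=0$ for all $k\geq 1$. Thus the $\boP_i^\ast$
constitute an $\ext_R^\bullet(\boF,\argu)$-acyclic coresolution of $\boP^A$ of length $m$, and
the standard acyclic-resolution argument (iterated dimension shifting) yields
$\caR^k(\boF)^\circledast(A)=\ext_R^k(\boF,\boP^A)=0$ for $k>m$.

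The main obstacle is the promotion of the relative injectivity of $\boJ^B$ to the vanishing
of all higher Ext groups in the ambient category $\euF_R(\caC^{\op},\RMod)$. One has to
verify that every successive syzygy produced by dimension shifting is again an $R$-lattice
functor, so that the resulting short exact sequence remains within the scope of the relative
injectivity axiom for $\boJ^B$. This is precisely where the Dedekind hypothesis enters
essentially, via the property that the kernel of a surjection of $R$-lattices is again an
$R$-lattice.
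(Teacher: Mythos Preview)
Your argument is correct and uses the same underlying ingredients as the paper: the finite relative-injective coresolution $0\to\boP^A\to\boI^0\to\cdots\to\boI^m\to 0$ obtained by dualising a length-$m$ projective $R$-lattice resolution of $\boJ^A$, together with the relative injectivity of the $\boJ^B$ with respect to split-injective maps of $R$-lattice functors (which in turn relies on the Dedekind hypothesis to keep all syzygies inside $\Rlat$).

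The difference is in packaging. The paper sets up the double complex $E_0^{s,t}=\nat_R(\boP_s,\boI^t)$ and compares the two spectral sequences: the vertical-first one collapses to $\ext_R^\bullet(\boF,\boP^A)$, while the horizontal-first one collapses onto the column $s=0$ (precisely because $\nat_R(\argu,\boI^t)$ is exact on short exact sequences of $R$-lattice functors) and is bounded by $t\le m$. You instead split the same content into two explicit dimension-shifting steps: first the auxiliary lemma $\ext_R^k(\boF,\boJ^B)=0$ for $k\ge 1$ (this is exactly the exactness of the rows in the paper's argument), and then a second dimension shift along the finite acyclic coresolution of $\boP^A$. Your route is more elementary in that it avoids spectral-sequence language altogether; the paper's route is more compact. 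Both extract the same information from the same two resolutions.
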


\begin{proof}
By Fact~\ref{fact:rlatproj}, $\boF$ has a projective resolution $(\boP_i,\partial_i^\boP,\eps_\boF)$
by projective $R$-lattice functors.
Moreover, by hypothesis, for $A\in\ob(\caC)$ the functor $\boJ^A$ has 
a finite projective resolution $(\boQ_j,\partial_j^\boQ,\eps_A)$ 
by projective $R$-lattice functors
and $\boQ_j=0$ for $j>m$.
Thus $\boP^A\simeq(\boJ^A)^\ast$ has a finite, relative injective resolution $(\boI^j,\delta^j,\mu_A)$,
$\boI^j=\boQ_j^\ast$, $\delta^{j-1}=(\partial_j^\boQ)^\ast$, $\mu_A=\eps_A^\ast$ with $\boI^j=0$ for
$j>m$. Consider the double complex $(E_0^{s,t},\partial_v,\partial_h)$,
where $E_0^{s,t}=\nat_R(\boP_s,\boI^t)$ and $\partial_v$ and $\partial_h$ 
are the vertical and horizontal differential induced by $\partial_\bullet^\boP$ and $\delta^\bullet$,
respectively. The cohomology of the total complex $(\Tot^\bullet(E_0^{s,t}),\partial_v+(-1)^\bullet\partial_h)$
can be calculated in two ways.

Applying first the vertical and then the horizontal differential yields a spectral sequence
with $E_2$-term
\begin{equation}
\label{eq:Ev}
{}^vE_2^{s,t}=
\begin{cases}
\ext_R^s(\boF,\boP^A)&\ \ \text{for $t=0$},\\
\hfil 0\hfil&\ \ \text{for $t\not=0$,}
\end{cases}
\end{equation}
concentrated on the $(t=0)$-line.
By definition, $\nat_R(\argu,\boI^j)$ is exact for every short exact sequence of 
$R$-lattice functors. Since $R$ is a Dedekind domain, 
$0\to\kernel(\partial_s^\boP)\to\boP_s\to\image(\partial_s^\boP)\to 0$ is a short exact
sequence of $R$-lattice functors for every $s\geq 0$.
Hence applying first the horizontal and then the vertical differential yields a spectral sequence
with $E_1$-term concentrated on the $(s=0)$-line, and 
${}^hE_1^{0,t}=0$ for $t>m$.
The claim then follows from the fact that both spectral sequences converge to the cohomology of the total complex.
\end{proof}

%%%%
\subsection{$R^\circledast$-order categories with the Whitehead property}
\label{ss:beauty}
Let $R$ be a Dedekind domain.
The Gorenstein property of an $R^\circledast$-order category 
is a quantitative measurement for the failure of being Frobenius.  
However, for our main purpose another property
plays a more important role. 
We say that an $R^\circledast$-order category $(\caC,\sigma)$ has the 
{\it Whitehead property}\footnote{The famous {\it Whitehead problem},
stated by J.~H.~Whitehead around 1950,
is the question whether every abelian group $A$ satisfying $\Ext^1_\Z(A,\Z)=0$ must be 
a free abelian group. For finitely generated abelian groups this is easily seen to be true,
and  K.~Stein showed (cf. \cite{stein:ab})
that the statement remains valid for countable abelian groups.
However, by the extra-ordinary work of S.~Shelah (cf. \cite{shel:wh1},
\cite{shel:wh2}, \cite{shel:wh3}) one knows now that 
this problem is in general undecidable.},
if any $\circledast$-acyclic $R$-lattice functor is projective.
The following property is well known (cf. \cite[Prop.~VIII.6.7]{brown:coh}).

\begin{fact}
\label{fact:glwh}
Let $R$ be a Dedekind domain, and let $(\caC,\sigma)$ be an
$R^\circledast$-order category of finite global $R$-lattice dimension.
Then $(\caC,\sigma)$ is Gorenstein and has the Whitehead property.
Moreover,
\begin{equation}
\label{eq:glwh}
\glatdim_R(\caC)=\max\{\,k\geq 0\mid\caR^k(\argu)^\circledast\not=0\,\}.
\end{equation}
\end{fact}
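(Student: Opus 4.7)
The plan is to verify the three claims in order, with the Gorenstein and Whitehead assertions feeding into the dimension formula, and the Whitehead property being the heart of the argument.

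Set $n = \glatdim_R(\caC) < \infty$. The Gorenstein claim is nearly immediate: each $\boJ^A = (\boP^A)^\ast$ is an $R$-lattice functor by the construction in \S\ref{ss:add}, so $\prdim_R(\boJ^A) \leq n$, making $(\caC, \sigma)$ an $n$-Gorenstein $R^\circledast$-order category.

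For the Whitehead property, let $\boF$ be a $\circledast$-acyclic $R$-lattice functor and set $d = \prdim_R(\boF) \leq n$. I will argue by contradiction that $d = 0$. Assume $d \geq 1$, fix a projective $R$-lattice resolution of length $d$ via Fact~\ref{fact:rlatproj}, and let $\boK$ denote the top syzygy, so that $0 \to \boP_d \to \boP_{d-1} \to \boK \to 0$ is exact. A standard dimension shift applied to the full resolution shows $\boK$ inherits $\circledast$-acyclicity from $\boF$. Dualising this short exact sequence via $\argu^\circledast$ and using the vanishing of $\ext^1(\boK, \boP^A)$ produces an exact sequence $0 \to \boK^\circledast \to \boP_{d-1}^\circledast \to \boP_d^\circledast \to 0$ which splits because $\boP_d^\circledast$ is projective by Prop.~\ref{prop:yonproj}; hence $\boK^\circledast$ is a projective $R$-lattice functor. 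Applying $\argu^\circledast$ again and comparing with the original sequence via the natural transformation $\eta$ yields a commutative diagram of short exact sequences whose first two vertical maps are the isomorphisms $\eta_{\boP_d}, \eta_{\boP_{d-1}}$ supplied by Prop.~\ref{prop:isoeta}; the five-lemma then upgrades this to an isomorphism $\eta_\boK\colon \boK \to \boK^{\circledast\circledast}$. Since $\boK^{\circledast\circledast}$ is projective (Prop.~\ref{prop:yonproj} again), so is $\boK$, and the resolution shortens to length $d - 1$, contradicting $d = \prdim_R(\boF)$.

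The dimension formula then follows in two directions. The inequality $\caR^k(\argu)^\circledast = 0$ for $k > n$ on lattice functors is Prop.~\ref{prop:gor} applied to the $n$-Gorenstein structure. For the reverse direction, fix a lattice functor $\boF$ realising $\prdim_R(\boF) = n$; if $\caR^n(\boF)^\circledast$ were to vanish, then a dimension shift would render the $(n-1)$-st syzygy $\circledast$-acyclic, hence projective by the Whitehead property just established, shortening the resolution of $\boF$ to length $n - 1$ and contradicting $\prdim_R(\boF) = n$. The principal technical hurdle is the manoeuvre in the Whitehead step of transferring projectivity from $\boK^\circledast$ back to $\boK$: this is where the double application of the Yoneda dual together with Prop.~\ref{prop:isoeta} and the five-lemma becomes essential.
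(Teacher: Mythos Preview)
Your proof is correct. Note, however, that the paper does not actually supply its own proof of this Fact: it is stated as well known and simply referenced to \cite[Prop.~VIII.6.7]{brown:coh}, so there is no in-paper argument to compare against directly.

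That said, your argument is essentially the standard one (and is in the spirit of Brown's). The crucial step---showing that a $\circledast$-acyclic lattice functor $\boK$ fitting into a short exact sequence $0\to\boP_d\to\boP_{d-1}\to\boK\to 0$ with projective $\boP_d,\boP_{d-1}$ must itself be projective---is handled exactly as one would expect: dualise once to see $\boK^\circledast$ is a summand of a projective, dualise again and invoke $\eta$ together with the five-lemma to transport projectivity back to $\boK$. Your use of Propositions~\ref{prop:yonproj} and~\ref{prop:isoeta} is precisely what those results are set up for. The dimension-formula argument via Proposition~\ref{prop:gor} for the upper bound and the Whitehead property for the lower bound is also clean.

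Two minor remarks. First, in the edge case $d=1$ your syzygy $\boK$ is $\boF$ itself, so the ``contradiction'' is really a direct proof that $\boF$ is projective; your phrasing still works but it is worth being aware of. Second, your reading of the formula \eqref{eq:glwh} as a statement about lattice functors (so that Proposition~\ref{prop:gor} applies for the upper bound) is the intended one in this context.
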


\begin{rem}
\label{rem:glwh}
Let $R$ be a Dedekind domain, and let $(\caC,\sigma)$ be an
$R^\circledast$-order category. For $m\geq 0$ one has the implications
\begin{equation}
\label{eq:glwh2}
\gldim_R(\caC)\leq m\ \Longrightarrow\ 
\caC\ \text{$m$-Gorenstein \& Whitehead}\ \Longrightarrow\ 
\caC\ \text{$m$-Gorenstein}.
\end{equation}
If $G$ is a finite group, then $(\Z[G]\bullet,\sigma)$, where
$\sigma(g)=g^{-1}$ for $g\in G$, is $0$-Gorenstein.
But $(\Z[G]\bullet,\sigma)$ has the Whitehead property 
if, and only if, $G$ is the trivial group.
Hence the second implication cannot be reversed.
For certain values of $m$ one can reverse
the first implication. E.g., if 
$(\caC,\sigma)$ is $0$-Gorenstein, then it has the Whitehead property if, and only if, 
every $R$-lattice functor is projective, i.e.,
$\glatdim_R(\caC)=0$.
This is also the case for $m=1$.
\end{rem}

\begin{fact}
\label{fact:gldim1}
Let $R$ be a Dedekind domain, and let $(\caC,\sigma)$ be 
a $1$-Gorenstein $R^\circledast$-order category.
Then $(\caC,\sigma)$ has the Whitehead property if, and only if, 
$\caC$ is pseudo-hereditary, i.e., $\glatdim_R(\caC)\leq 1$.
\end{fact}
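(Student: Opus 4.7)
The plan is to prove the two implications separately, with the easy direction falling straight out of Fact~\ref{fact:glwh} and the main content residing in the converse, where I will use the long exact sequence of the Yoneda dual's derived functors to promote a first syzygy to a $\circledast$-acyclic functor.

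For the direction ``pseudo-hereditary $\Rightarrow$ Whitehead'', assume $\glatdim_R(\caC)\leq 1$. Then $(\caC,\sigma)$ has finite global $R$-lattice dimension, so Fact~\ref{fact:glwh} gives the Whitehead property directly. No further work is needed here.

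For the converse, assume $(\caC,\sigma)$ is $1$-Gorenstein and Whitehead, and let $\boF\in\ob(\euF_R(\caC^{\op},\Rlat))$ be an arbitrary $R$-lattice functor. By Fact~\ref{fact:rlatproj} there is a short exact sequence
\begin{equation}
0\longrightarrow\boK\longrightarrow\boP_0\longrightarrow\boF\longrightarrow 0
\end{equation}
with $\boP_0$ a projective $R$-lattice functor and $\boK$ an $R$-lattice functor (this last point uses that $R$ is Dedekind, as recalled at the beginning of \S\ref{s:rord}). The strategy is to show $\boK$ is projective, which forces $\prdim(\boF)\leq 1$ and thus $\glatdim_R(\caC)\leq 1$. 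By the Whitehead hypothesis, it suffices to verify that $\boK$ is $\circledast$-acyclic.

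Apply the contravariant left-exact Yoneda dual to the short exact sequence above to obtain the long exact sequence
\begin{equation}
\cdots\longrightarrow\caR^k(\boP_0)^\circledast\longrightarrow\caR^k(\boK)^\circledast\longrightarrow\caR^{k+1}(\boF)^\circledast\longrightarrow\caR^{k+1}(\boP_0)^\circledast\longrightarrow\cdots
\end{equation}
Since $\boP_0$ is projective, $\caR^k(\boP_0)^\circledast=0$ for $k\geq 1$, so the connecting map yields natural isomorphisms $\caR^k(\boK)^\circledast\simeq\caR^{k+1}(\boF)^\circledast$ for $k\geq 1$. Because $(\caC,\sigma)$ is $1$-Gorenstein and $\boF$ is an $R$-lattice functor, Proposition~\ref{prop:gor} gives $\caR^{k+1}(\boF)^\circledast=0$ for $k\geq 1$. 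Hence $\caR^k(\boK)^\circledast=0$ for all $k\geq 1$, i.e., $\boK$ is $\circledast$-acyclic. The Whitehead property then yields that $\boK$ is projective, completing the argument.

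The only non-routine step is the appeal to Proposition~\ref{prop:gor}, which is where the $1$-Gorenstein hypothesis is consumed; everything else is a standard long exact sequence chase. Note that this also explains why the analogous reversal of implications fails in higher Gorenstein dimension: without bounding $\caR^k(\boF)^\circledast$ for $k\geq 2$, the syzygy $\boK$ need not be $\circledast$-acyclic.
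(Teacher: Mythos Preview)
Your proof is correct and follows essentially the same approach as the paper: take a first syzygy $\boK$ of $\boF$ by a projective $R$-lattice functor, use the long exact sequence for $\caR^\bullet(\argu)^\circledast$ together with Proposition~\ref{prop:gor} (the $1$-Gorenstein bound) to see that $\boK$ is $\circledast$-acyclic, and then invoke the Whitehead property. Your write-up is in fact a bit more explicit than the paper's, which only displays the degree-$1$ piece of the long exact sequence and leaves the vanishing of $\caR^k(\boK)^\circledast$ for $k\geq 2$ implicit in the appeal to Proposition~\ref{prop:gor}.
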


\begin{proof} By Fact \ref{fact:glwh}, it suffice to show the
reverse direction of the first implication of \eqref{eq:glwh2}.
Suppose that $(\caC,\sigma)$ is $1$-Gorenstein and has the Whitehead property.
Let $\boF\in\ob(\euF_R(\caC^{\op},\Rlat))$.
Then there exists a surjective natural transformation $\pi\colon \boP\to\boF$
for some projective $R$-lattice functor $\boP$, and $\boQ=\kernel(\pi)$
is an $R$-lattice functor. By 
Proposition~\ref{prop:gor} and the long exact sequence, the sequence
\begin{equation}
\label{eq:semiher2}
\caR^1(\boF)^\circledast\longrightarrow
\caR^1(\boP)^\circledast\longrightarrow
\caR^1(\boQ)^\circledast\longrightarrow 0
\end{equation}
is exact.
As $\caR^1(\boP)^\circledast=0$, $\boQ$ is $\circledast$-acyclic
and thus, by hypothesis, projective.
\end{proof}

%%%%
\subsection{Functors between $R^\circledast$-categories}
\label{ss:funRcat}
Let $(\caC,\sigma_\caC)$ and $(\caD,\sigma_\caD)$ be 
$R^\circledast$-categories.  An $R$-linear functor $\phi\colon\caC\to\caD$
will be called {\it unitary}, if
\begin{equation}
\label{eq:Runi}
\sigma_\caD\circ\phi=\phi\circ\sigma_{\caC}.
\end{equation}
If $\ob(\caC)$ and $\ob(\caD)$ are sets the unitary functor
$\pi\colon(\caC,\sigma_\caC)\to(\caD,\sigma_\caD)$ will be called a {\it unitary projection}, if
$\pi\colon\ob(\caC)\longrightarrow\ob(\caD)$ is a bijection, and
\begin{equation}
\label{eq:Runi2}
\pi_{A,B}\colon\Hom_\caC(A,B)\longrightarrow\Hom_\caD(\pi(A),\pi(B))
\end{equation}
is surjective for any pair of objects $A,B\in\ob(\caC)$.
For such a functor composition with $\pi$ induces
an exact inflation functor
\begin{equation}
\label{eq:inf}
\ifl^\pi(\argu)=\argu\circ\pi\colon\euF_R(\caD^{\op},\RMod)\longrightarrow\euF_R(\caC^{\op},\RMod).
\end{equation}
Let $\pi\colon (\caC,\sigma_\caC)\to(\caD,\sigma_\caD)$ be a unitary
projection of $R^\circledast$-order categories. Then $\pi$ induces a surjective
homomorphisms of $R$-orders $\mu(\pi)\colon\mu_{\caC}\to\mu_{\caD}$ (cf. Rem.~\ref{rem:rep}).
Moreover, the inflation functor $\ifl_{\mu_{\caD}}^{\mu_{\caC}}(\argu)\colon\Mod_{\mu_{\caD}}\to
\Mod_{\mu_{\caC}}$ has a left-adjoint
\begin{equation}
\label{eq:dfl1}
\dfl_{\mu_{\caD}}^{\mu_{\caC}}(\argu)=\argu\otimes_{\mu_{\caC}}\mu_{\caD}\colon\Mod_{\mu_{\caC}}\to
\Mod_{\mu_{\caD}}.
\end{equation}
From this fact one concludes the following.

\begin{fact}
\label{fact:dfl}
Let $\pi\colon (\caC,\sigma_\caC)\to(\caD,\sigma_\caD)$ be a unitary
projection of $R^\circledast$-order categories.
\begin{itemize}
\item[(a)] There exists a functor
\begin{equation}
\label{eq:dfl2}
\dfl^\pi(\argu)\colon \euF_R(\caC^{\op},\RMod)\longrightarrow\euF_R(\caD^{\op},\RMod)
\end{equation}
which is left-adjoint to $\ifl^\pi(\argu)$.
\item[(b)] The unit of the adjunction
$\eta\colon\iid_{\euF_R(\caC^{\op},\RMod)}\longrightarrow \ifl^\pi\circ\dfl^\pi$
is a natural surjection, and 
the the co-unit
$\varepsilon\colon \dfl^\pi\circ\ifl^\pi\longrightarrow\iid_{\euF_R(\caD^{\op},\RMod)}$
is a natural isomorphism.
\item[(c)] For all $A\in\ob(\caC)$ there exists an isomorphism
$\xi_A\colon \dfl^\pi(\boP^A)\to\boP^{\pi(A)}$ making the diagram
\begin{equation}
\label{eq:dlf3}
\xymatrix{
\dfl^\pi(\boP^A)\ar[r]^-{\xi_A}\ar[d]_{\dfl^\pi(\boP(\phi))}&\boP^{\pi(A)}\ar[d]^{\boP(\pi(\phi))}\\
\dfl^\pi(\boP^B)\ar[r]^-{\xi_B}&\boP^{\pi(B)}
}
\end{equation}
commute for all $\phi\colon A\to B\in\Hom_\caC(A,B)$.
\end{itemize}
\end{fact}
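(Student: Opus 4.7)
The plan is to translate the statement, via the equivalence from Remark~\ref{rem:rep}, into the corresponding claim about right modules over the $R$-orders $\mu_{\caC}$ and $\mu_{\caD}$. Under this translation $\ifl^\pi$ corresponds to the restriction-of-scalars functor $\ifl_{\mu_{\caD}}^{\mu_{\caC}}$ induced by the surjective $R$-algebra homomorphism $\mu(\pi)\colon\mu_{\caC}\to\mu_{\caD}$, and part (a) is obtained by letting $\dfl^\pi$ be the functor corresponding to $\dfl_{\mu_{\caD}}^{\mu_{\caC}}(\argu)=\argu\otimes_{\mu_{\caC}}\mu_{\caD}$ already introduced in \eqref{eq:dfl1}; the standard tensor-hom adjunction supplies the adjointness.

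For (b), the surjectivity of $\mu(\pi)$ is the essential input. For the unit $\eta_\boF\colon\boF\to\boF\otimes_{\mu_{\caC}}\mu_{\caD}$, $f\mapsto f\otimes 1$, every simple tensor $f\otimes d$ can be rewritten as $f\otimes\mu(\pi)(c)=(fc)\otimes 1$ for some $c\in\mu_{\caC}$, so $\eta_\boF$ is surjective. For the co-unit $\varepsilon_\boG\colon\boG\otimes_{\mu_{\caC}}\mu_{\caD}\to\boG$: since the $\mu_{\caC}$-action on a $\mu_{\caD}$-module $\boG$ factors through $\mu(\pi)$, the rule $g\otimes d\mapsto g\cdot d$ is a well-defined $\mu_{\caD}$-linear map with two-sided inverse $g\mapsto g\otimes 1$, hence an isomorphism.

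For (c), under the equivalence $\boP^A$ corresponds to the right $\mu_{\caC}$-module $\iid_A\cdot\mu_{\caC}$. Since $\pi$ is a functor one has $\mu(\pi)(\iid_A)=\iid_{\pi(A)}$, and hence
\[
\dfl^\pi(\boP^A)\simeq \iid_A\mu_{\caC}\otimes_{\mu_{\caC}}\mu_{\caD} \simeq \iid_{\pi(A)}\cdot\mu_{\caD},
\]
which corresponds to $\boP^{\pi(A)}$; this defines $\xi_A$. The commutativity of \eqref{eq:dlf3} follows because both composites send the generator $\iid_A\in\iid_A\mu_{\caC}$ to $\pi(\phi)\in\iid_{\pi(B)}\cdot\mu_{\caD}$. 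The verifications are essentially bookkeeping; the only point requiring some care is keeping the identifications from Remark~\ref{rem:rep} consistent throughout, so that the constructed unit, co-unit and isomorphism $\xi_A$ are genuinely $R$-linear and natural in the original functor categories.
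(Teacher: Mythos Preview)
Your proposal is correct and follows precisely the approach the paper intends: the paper states this as a Fact without explicit proof, deriving it from the preceding paragraph where the surjective $R$-order homomorphism $\mu(\pi)\colon\mu_{\caC}\to\mu_{\caD}$ and the tensor-product left adjoint \eqref{eq:dfl1} are introduced via Remark~\ref{rem:rep}. You have simply spelled out the standard module-theoretic verifications that the paper leaves implicit.
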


For $A\in\ob(\caC)$ and $\phi\in\Hom_\caC(A,B)$
put $\boQ^A=\ifl^\pi(\boP^{\pi(A)})$ and 
$\boQ(\phi)=\ifl^\pi(\boP(\pi(\phi)))$. Then
\begin{equation}
\label{eq:dfl4}
\xymatrix@C=2truecm{
\tau_A\colon \boP^A\ar[r]^-{\eta_{\boP^A}} &\ifl^\pi(\dfl^\pi(\boP^A))\ar[r]^-{\ifl^\pi(\xi_A)}&\boQ^A
}
\end{equation}
is a surjection satisfying 
$\tau_B\circ\boP(\phi)=\boQ(\phi)\circ\tau_A$
for all $\phi\in\Hom_\caC(A,B)$.

For $\boF\in\ob(\euF_R(\caC^{\op},\RMod))$ let $\boF^\boxtimes\in\ob(\euF_R(\caC^{\op},\RMod))$
be the functor given by
$\boF^\boxtimes(A)=\nat_R(\boF,\boQ^A)$ and 
$\boF^\boxtimes(\phi)=\boQ(\sigma_\caC(\phi))\circ\argu$
for $\phi\in\Hom_{\caC}(A,B)$.
Then $\argu^\boxtimes\colon\euF_R(\caC^{\op},\RMod)\to\euF_R(\caC^{\op},\RMod)^{\op}$
is a functor. By \eqref{eq:Runi}, one has 
\begin{equation}
\label{eq:dfl5}
\boP(\pi(\sigma_\caC(\phi)))=\boP(\sigma_\caD(\pi(\phi)))\colon \boP^{\pi(B)}\to\boP^{\pi(A)}
\end{equation}
for all $\phi\in\Hom_\caC(A,B)$. Hence the mapping 
$\ttau\colon\argu^\circledast\longrightarrow\argu^\boxtimes$ induced by $\tau$
is a natural transformation.
Since
\begin{equation}
\label{eq:dfl6}
\boF^\boxtimes(A)=\nat_R(\boF,\boQ^A)\simeq\nat_R(\dfl^\pi(\boF),\boP^{\pi(A)})=
\ifl^\pi(\dfl^\pi(\boF)^\circledast),
\end{equation}
$\argu^\boxtimes$ can be identified with $\ifl^\pi(\dfl^\pi(\argu)^\circledast)$.
Thus by the left-adjointness of $\dfl^\pi(\argu)$, $\ttau$ induces a natural transformation
\begin{equation}
\label{eq:dfl7}
\tttau\colon \dfl^\pi(\argu^\circledast)\longrightarrow\dfl^\pi(\argu)^\circledast
\colon\euF_R(\caC^{\op},\RMod)\longrightarrow\euF_R(\caD^{\op},\RMod)^{\op}.
\end{equation}
For $A\in\ob(\caC)$ the mapping 
$\tttau_{\boP^A}\colon \dfl^\pi((\boP^A)^\circledast)\to\dfl(\boP_A)^\circledast$
coincides with the isomorphism
\begin{equation}
\label{eq:dfl8}
\tttau_{\boP^A}\colon \Hom_{\mu_{\caC}}(\iid_A\cdot\mu_{\caC},\mu_{\caC})^\times
\otimes_{\mu_{\caC}}\mu_{\caD}\longrightarrow
\Hom_{\mu_{\caD}}(\iid_{\pi(A)}\cdot\mu_{\caD},\mu_{\caD})^\times.
\end{equation}
From this fact one concludes the following.

\begin{fact}
\label{fact:uni}
Let $\pi\colon (\caC,\sigma_\caC)\to(\caD,\sigma_\caD)$ be a unitary
projection of $R^\circledast$-order categories. Then
$\tttau_{\boP}\colon\dfl^\pi(\boP^\circledast)\to\dfl^\pi(\boP)^\circledast$
is an isomorphism for every projective $R$-lattice functor
$\boP\in\ob(\euF_R(\caC^{\op},\RMod))$.
\end{fact}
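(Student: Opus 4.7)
The plan is to reduce to the case of a representable projective $\boP=\boP^A$ with $A\in\ob(\caC)$, where the paragraph preceding the statement already identifies $\tttau_{\boP^A}$ with the explicit map \eqref{eq:dfl8}, and then to extend to arbitrary projective $R$-lattice functors by additivity.

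First I would verify that \eqref{eq:dfl8} is an isomorphism. Under the evaluation-at-identity isomorphism $\Hom_{\mu_\caC}(\iid_A\cdot\mu_\caC,\mu_\caC)\simeq\mu_\caC\cdot\iid_A$ and the analogous identification over $\mu_\caD$, the left-hand side of \eqref{eq:dfl8} becomes $(\mu_\caC\cdot\iid_A)\otimes_{\mu_\caC}\mu_\caD$, the right-hand side becomes $\mu_\caD\cdot\iid_{\pi(A)}$, and the induced map is $x\otimes y\mapsto\mu(\pi)(x)\cdot y$. Since $\mu(\pi)\colon\mu_\caC\to\mu_\caD$ is a surjective homomorphism of $R$-orders with $\mu(\pi)(\iid_A)=\iid_{\pi(A)}$, the tensor product simply collapses the $\mu_\caC$-action to the $\mu_\caD$-action, so the map is visibly an isomorphism. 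Compatibility of the $\sigma$-twists (the ${}^\times$ of Remark~\ref{rem:yondual}) on the two sides is forced by the unitarity condition \eqref{eq:Runi}; this is the bookkeeping step that needs the most care.

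Second, additivity gives the result for any finite direct sum $\boP=\bigoplus_i\boP^{A_i}$: both $\dfl^\pi$ (a left adjoint by Fact~\ref{fact:dfl}(a)) and $\argu^\circledast$ commute with finite biproducts, so $\tttau_{\bigoplus_i\boP^{A_i}}$ is the direct sum of the isomorphisms $\tttau_{\boP^{A_i}}$. Finally, since $\ob(\caC)$ is finite and each $\Hom_\caC(A,B)$ is a finitely generated $R$-module, $\mu_\caC$ is finitely generated over $R$, and under the equivalence of Remark~\ref{rem:rep} every projective $R$-lattice functor corresponds to a finitely generated projective right $\mu_\caC$-module, hence is a direct summand of some $\bigoplus_i\boP^{A_i}$. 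By naturality of $\tttau$ with respect to the defining idempotent, $\tttau_\boP$ is a direct summand of an isomorphism, and therefore itself an isomorphism. The proof is really just the identification in the first step followed by formal additivity; there is no deep obstacle.
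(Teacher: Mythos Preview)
Your proposal is correct and follows exactly the approach the paper indicates: the paper asserts that $\tttau_{\boP^A}$ coincides with the explicit isomorphism \eqref{eq:dfl8} and then simply writes ``From this fact one concludes the following'' before stating Fact~\ref{fact:uni}, leaving the passage to general projective $R$-lattice functors via additivity and direct summands implicit. You have merely spelled out those implicit steps carefully.
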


If $\pi\colon (\caC,\sigma_\caC)\to(\caD,\sigma_\caD)$ is a unitary
projection of $R^\ast$-order categories, its deflation functor
$\dfl^\pi(\argu)\colon\euF_R(\caC^{\op},\RMod)\to\euF_R(\caD^{\op},\RMod)$ 
is right exact and maps projectives to projectives 
(cf. \cite[Prop.~2.3.10]{weib:hom}). 
We denote by
$\caL_k\dfl^\pi(\argu)$ its left derived functors.
Functors $\boF\in\ob(\euF(\caC^{\op},\RMod))$ satisfying $\caL_k\dfl^\pi(\boF)=0$
for all $k>0$ will be called {\it $\pi$-acyclic}.

\begin{thm}
\label{thm:whitehead}
Let $R$ be a Dedekind domain, and let
$\pi\colon(\caC,\sigma_\caC)\to(\caD,\sigma_\caD)$
be a unitary projection of $R^\ast$-order categories.
Assume further that 
\begin{itemize}
\item[{\rm (i)}] $(\caC,\sigma_\caC)$ is $\circledast$-symmetric \textup{(cf. \S\ref{ss:deryon})};
\item[{\rm (ii)}] $(\caD,\sigma_\caD)$ has the Whitehead property \textup{(cf. \S\ref{ss:beauty})};
\item[{\rm (iii)}] if $\boF^\circledast\in\ob(\euF_R(\caC^{\op},\Rlat))$ is
$\circledast$-acyclic, $\boF$ is also $\pi$-acyclic.
\end{itemize}
Then $\dfl^\pi(\boG)$ is a projective $R$-lattice functor for any $\circledast$-acyclic
$R$-lattice functor $\boG\in\ob(\euF_R(\caC^{\op},\Rlat))$.
\end{thm}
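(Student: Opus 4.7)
The plan is to construct a complete $\circledast$-exact projective $R$-lattice resolution of $\dfl^\pi(\boG)$ on $\caD$; once such a resolution is at hand, Proposition~\ref{prop:explores} forces $\dfl^\pi(\boG)$ to be $\circledast$-bi-acyclic, and hypothesis~(ii) (the Whitehead property) then yields projectivity.

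As a first step, $\circledast$-symmetry~(i) promotes the hypothesis that $\boG$ is $\circledast$-acyclic to $\boG$ being $\circledast$-bi-acyclic, so Proposition~\ref{prop:explores} supplies a complete $\circledast$-exact projective $R$-lattice resolution $(\boT_\bullet,\partial^\boT_\bullet,\rho)$ of $\boG$. Let $\boK_k=\ker(\partial^\boT_k)=\image(\partial^\boT_{k+1})$ denote its syzygies; since $R$ is a Dedekind domain, each $\boK_k$ is an $R$-lattice functor, and $\boK_{-1}\simeq\boG$.

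The main step is to prove that \emph{every} $\boK_k$ is $\circledast$-acyclic. Applying the left-exact contravariant functor $\argu^\circledast$ to the short exact sequence $0\to\boK_k\to\boT_k\to\boK_{k-1}\to 0$ and using $\caR^j(\boT_k)^\circledast=0$ for $j\geq 1$ (by Proposition~\ref{prop:yonproj}, since $\boT_k$ is projective) yields natural isomorphisms $\caR^j(\boK_k)^\circledast\simeq\caR^{j+1}(\boK_{k-1})^\circledast$ for every $j\geq 1$. Starting from $\boK_{-1}\simeq\boG$ this gives, by induction on $k$, that $\boK_k$ is $\circledast$-acyclic for all $k\geq -1$. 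For $k\leq -2$ the same argument will be applied to the complete $\circledast$-exact resolution $\boT_\bullet^\circledast$ of $\boG^\circledast$: a direct factorization of $(\partial^\boT_{-k})^\circledast$ through $\boK_{-k-1}^\circledast$, combined with the sequence $0\to\boK_{-k-1}\to\boT_{-k-1}\to\boK_{-k-2}\to 0$, identifies the $k$-th syzygy of $\boT_\bullet^\circledast$ with $\boK_{-k-2}^\circledast$. Since $\boG^\circledast$ is $\circledast$-acyclic by~(i), the parallel induction then gives $\boK_j^\circledast$ $\circledast$-acyclic for every $j\leq -1$, and invoking~(i) once more yields $\boK_j$ itself $\circledast$-acyclic in this range. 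Thus $\boK_k$ is $\circledast$-acyclic for every $k\in\Z$, and~(i) further makes each $\boK_k^\circledast$ $\circledast$-acyclic.

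Hypothesis~(iii) now applies to each $\boK_k$, making it $\pi$-acyclic. Consequently $\dfl^\pi$ preserves exactness of each short exact sequence $0\to\boK_k\to\boT_k\to\boK_{k-1}\to 0$, and splicing shows that $\dfl^\pi(\boT_\bullet)$ is an exact chain complex of projective $R$-lattice functors on $\caD$ (using Fact~\ref{fact:dfl}(c) for projectivity), with $\coker(\dfl^\pi(\partial^\boT_1))\simeq\dfl^\pi(\boG)$ by right-exactness of $\dfl^\pi$. Fact~\ref{fact:uni} then identifies $(\dfl^\pi(\boT_\bullet))^\circledast$ with $\dfl^\pi(\boT_\bullet^\circledast)$, which is exact by the same reasoning applied to the complete resolution $\boT_\bullet^\circledast$ of $\boG^\circledast$. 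Hence $\dfl^\pi(\boT_\bullet)$ is a complete $\circledast$-exact projective resolution of $\dfl^\pi(\boG)$, and Proposition~\ref{prop:explores} together with hypothesis~(ii) completes the proof. The main technical obstacle is the two-sided induction establishing $\circledast$-acyclicity of every syzygy $\boK_k$; this is resolved by working in parallel with the complete resolutions of $\boG$ and $\boG^\circledast$, linked through Fact~\ref{fact:uni}.
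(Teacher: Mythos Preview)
Your overall strategy matches the paper's: build a $\circledast$-exact complete projective resolution of $\boG$, show that all of its syzygies are $\circledast$-bi-acyclic, deflate, use Fact~\ref{fact:uni} to obtain a $\circledast$-exact complete projective resolution of $\dfl^\pi(\boG)$, and conclude via Proposition~\ref{prop:explores} and the Whitehead property.

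There is, however, a genuine gap in your two-sided induction. For the negative-index syzygies you correctly identify the $k$-th syzygy of $\boT_\bullet^\circledast$ with $\boK_{-k-2}^\circledast$ (this identification needs only left-exactness of $\argu^\circledast$, not any acyclicity), and from it you deduce that $\boK_j^\circledast$ is $\circledast$-acyclic for every $j\leq -1$. But the step ``invoking~(i) once more yields $\boK_j$ itself $\circledast$-acyclic'' is not justified: $\circledast$-symmetry says only that if $\boF$ is $\circledast$-acyclic then so is $\boF^\circledast$, not the converse. Since $\eta_\boF\colon\boF\to\boF^{\circledast\circledast}$ is in general only an isomorphism for projective $\boF$ (Proposition~\ref{prop:isoeta}), you cannot recover $\circledast$-acyclicity of $\boK_j$ from that of $\boK_j^\circledast$ this way. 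And you do need $\boK_j$ itself to be $\circledast$-acyclic (hence $\pi$-acyclic by~(iii)) for \emph{every} $j\in\Z$, since exactness of $\dfl^\pi(\boT_\bullet)$ at degree $k$ amounts to $\caL_1\dfl^\pi(\boK_{k-2})=0$.

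The paper replaces the two-sided induction by a one-line observation: any shift of a $\circledast$-exact complete projective $R$-lattice resolution is again such a resolution, so every cokernel $\boC_k=\coker(\partial_{k+1})$ (equivalently every syzygy $\boK_{k-1}$) admits one and is therefore Gorenstein projective. Proposition~\ref{prop:explores} then makes each $\boC_k$ $\circledast$-bi-acyclic outright, with no induction needed. With this in place of your induction, the remainder of your argument goes through exactly as written.
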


\begin{proof}
Suppose that $\boG\in\ob(\euF_R(\caC^{\op},\Rlat))$ is $\circledast$-acyclic.
By hypothesis (i), $\boG$ is $\circledast$-bi-acyclic and thus Gorenstein projective
(cf. Prop.~\ref{prop:explores}),
i.e., $\boG$ admits a 
$\circledast$-exact complete projective $R$-lattice functor resolution
$(\boP_\bullet,\partial_\bullet^\boP,\eps_\boG)$.
Shifting the chain complex $(\boP_\bullet,\partial_\bullet^\boP)$ appropriately,
one concludes that every functor $\boC_k=\coker(\partial_{k+1})$ admits a 
$\circledast$-exact complete projective $R$-lattice functor resolution
for all $k\in\Z$, and hence is Gorenstein projective. 
Thus by Proposition~\ref{prop:explores}, $\boC_k$ is $\circledast$-bi-acyclic, i.e.,
$\boC_k$ and $\boC_k^\circledast$ are $\pi$-acyclic for all $k\in\Z$.

Let $(\boQ_\bullet,\partial_\bullet^\boQ)=(\dfl^\pi(\boP_\bullet),\dfl^\pi(\partial_\bullet^\boP))$.
As
$H_k(\boQ_\bullet,\partial_\bullet^\boQ)\simeq \caL_1\dfl^\pi(\boC_{k-1})=0$,
$(\boQ_\bullet,\partial_\bullet^\boQ)$ is exact. By Fact~\ref{fact:uni}, one has an isomorphism
of chain complexes
\begin{equation}
\label{eq:uni1}
(\boQ_\bullet,\partial_\bullet^\boQ)^\circledast\simeq (\dfl^\pi((\boP_\bullet,\partial_\bullet^\boP)^\circledast).
\end{equation}
Moreover, as $H_k((\boQ_\bullet,\partial_\bullet^\boQ)^\circledast)\simeq
\caL_1\dfl^\pi(\boC^\circledast_{1-k})=0$ (cf. \S\ref{ss:compproj}), 
$(\boQ_\bullet,\partial_\bullet^\boQ)^\circledast$ is also exact. Hence
$(\boQ_\bullet,\partial_\bullet^\boQ,\dfl^\pi(\eps_\boG))$ is a 
$\circledast$-exact complete projective $R$-lattice functor resolution of
$\dfl^\pi(\boG)$. In particular, $\dfl^\pi(\boG)$ is Gorenstein projective 
and thus $\circledast$-bi-acyclic
(cf. Prop.~\ref{prop:explores}). Hence by hypothesis (ii), $\dfl^\pi(\boG)$ is projective.
\end{proof}

%%%%%%%%%%%%%%%%%%%%%
%%%% last changes 6/9/2012 %%%%%
%%%%%%%%%%%%%%%%%%%%%

\section{Cohomological Mackey functors}
\label{ss:mack}
Throughout this section $G$ will denote a finite group, 
and - if not stated otherwise -
$R$ will denote a commutative ring with unit $1_R\in R$.

%%%%
\subsection{Cohomological $G$-Mackey functors}
\label{ss:Gmac1}
A {\it cohomological $G$-Mackey functor $\boX$}
with values in the category of $R$-modules
is a family of $R$-modules $(\boX_U)_{U\subseteq G}$ together with
homomorphisms of $R$-modules
\begin{equation}
\label{eq:mac1}
\begin{aligned}
i_{U,V}^\boX\colon&\boX_U\longrightarrow \boX_V,\\
t_{V,U}^\boX\colon&\boX_V\longrightarrow \boX_U,\\
c_{g,U}^\boX\colon& \boX_U\longrightarrow\boX_{{}^gU},
\end{aligned}
\end{equation}
for $U,V\subseteq G$, $V\subseteq U$, $g\in G$, which satisfy the identities:
\begin{itemize}
\item[(cMF$_1$)] $i_{U,U}^\boX=t_{U,U}^\boX=c_{u,U}^\boX=\iid_{\boX_U}$ for all $U\subseteq G$
and all $u\in U$;
\item[(cMF$_2$)] $i_{V,W}^\boX\circ i_{U,V}^\boX=i_{U,W}^\boX$ and
$t_{V,U}^\boX\circ t_{W,V}^\boX=t_{W,U}^\boX$ for all $U,V,W\subseteq G$ and 
$W\subseteq V\subseteq U$;
\item[(cMF$_3$)] $c_{h,{}^gU}^\boX\circ c_{g,U}^\boX=c_{hg,U}^\boX$ for all $U\subseteq G$ and 
$g,h\in G$;
\item[(cMF$_4$)] $i_{{}^gU,{}^gV}^\boX\circ c_{g,U}^\boX=c_{g,V}^\boX\circ i_{U,V}^\boX$ 
for all $U,V\subseteq G$, $V\subseteq U$, and $g\in G$;
\item[(cMF$_5$)] $t_{{}^gV,{}^gU}^\boX\circ c_{g,V}^\boX=c_{g,U}^\boX\circ t_{V,U}^\boX$ 
for all $U,V\subseteq G$, $V\subseteq U$, and $g\in G$;
\item[(cMF$_6$)] $i_{U,W}^\boX\circ t_{V,U}^\boX=\sum_{g\in W\setminus U/V}
t_{ {}^gV\cap W,W}^\boX\circ c_{g,V\cap W^g}^\boX\circ i_{V,V\cap W^g}^\boX$, where $W^g=g^{-1}Wg$
for all subgroups $U, V, W\subseteq G$ and 
$V,W\subseteq U$;
\item[(cMF$_7$)] $t_{V,U}^\boX\circ i_{U,V}^\boX=|U:V|.\iid_{\boX_U}$ 
for all subgroups $U,V\subseteq G$, $V\subseteq U$.
\end{itemize}
A homomorphism of cohomological Mackey functors $\phi\colon\boX\to\boY$ is a family
of $R$-module homomorphisms $\phi_U\colon\boX_U\to\boY_U$, $U\subseteq G$ which commute with all the mappings
$i_{.,.}$, $t_{.,.}$ and $c_{g,.}$, $g\in G$.
By $\MC_G(\RMod)$ we denote the abelian category of all cohomological $G$-Mackey functors
with values in the category of $R$-modules.  
For $\boX$, $\boY\in\ob(\MC_G(\RMod))$
we denote by $\nat_G(\boX,\boY)$ the morphisms in the category $\MC_G(\RMod)$.
For further details on Mackey functors see
\cite{dress:mac}, \cite{pj:simp}, \cite{pw:user}.

%%%%
\subsection{The Mackey category}
\label{ss:maccat}
Let $\mf(G)$ be the category the objects of which 
are subgroups of $G$ with morphisms given by
\begin{equation}
\label{eq:mormf}
\Hom_{\mf(G)}(U,V)=\Hom_G(\Z[G/U],\Z[G/V]).
\end{equation}
Then $\mf(G)$ is a $\Z$-order category which is generated by the morphisms
\begin{align}
\rho^U_g\colon&\Z[G/{}^gU]\longrightarrow\Z[G/U],&
\rho_g^U(xgUg^{-1})&=xgU;\label{eq:mormf11}\\
\eui_{V,U}\colon&\Z[G/V]\longrightarrow\Z[G/U],&
\eui_{V,U}(xV)&=xU;\label{eq:mormf12}\\
\eut_{U,V}\colon&\Z[G/U]\longrightarrow\Z[G/V],&
\eut_{U,V}(xU)&=\textstyle{\sum_{r\in\caR}} xrV; \label{eq:mormf13}
\end{align}
$g\in G$, $U,V\subseteq G$, $V\subseteq U$, where $\caR\subseteq U$ is a set
of right $V$-coset representatives. 
The assignment 
\begin{equation}
\label{eq:sigmamac}
\sigma(U)=U,\ 
\sigma(\rho_g^U)=\rho_{g^{-1}}^{{}^gU} ,\
\sigma(\eui_{V,U})=\eut_{U,V},\ 
\sigma(\eut_{U,V})=\eui_{V,U},
\end{equation}
for $U,V\subseteq G$, $V\subseteq U$, $g\in G$, defines an
antipode $\sigma\colon\mf(G)\to\mf(G)^{op}$.
Let $\mf_R(G)$ denote the $R$-order category
obtained from $\mf(G)$ by tensoring with $R$.
Assigning to every cohomological $G$-Mackey functor $\boX$
with values in $\RMod$ the contravariant functor
$\tboX$ given by
\begin{equation}
\label{eq:maccon1}
\tboX(U)=\boX_U,\ 
\tboX(\rho^U_g)=c_{g,U}^\boX,\ 
\tboX(\eui_{V,U})=i_{U,V}^\boX,\ 
\tboX(\eut_{U,V})=t_{V,U}^\boX,
\end{equation}
yields an identification between $\MC_G(\RMod)$ and
$\euF_R(\mf_R(G)^{\op},\RMod)$.
Note that some authors prefer to identify the category
of cohomological Mackey functors with the category of covariant functors
of $\mf_R(G)$. The existence of the antipode $\sigma\colon\mf_G(R)\to\mf_G(R)^{\op}$
showes that both approaches are equivalent.

%%%%
\subsection{The cohomological Mackey functors $\buT$ and $\boT$}
\label{ss:ST}
Let $G$ be a finite group.
There are two particular cohomological $G$-Mackey functors based on
the $R$-module $R$.
Let $\buT\in\ob(\MC_G(\RMod))$ be given by
\begin{equation}
\label{eq:defbuT1}
\buT_U=R,\ \ i_{U,V}^{\buT}=|U:V|\iid_R,\ \ t_{V,U}^{\buT}=\iid_R,\ \ 
c_{g,U}^{\buT}=\iid_R,
\end{equation}
and $\boT\in\ob(\MC_G(\RMod))$ be given by
\begin{equation}
\label{eq:defboT}
\boT_U=R,\ \ i_{U,V}^\boT=\iid_R,\ \ t_{V,U}^\boT=|U:V|\iid_R,\ \ 
c_{g,U}^\boT=\iid_R,
\end{equation}
for $U,V\subseteq G$, $V\subseteq U$.
Then $\buT$ and $\boT$ are $R$-lattice functors, and one has
 $\boT\simeq\buT^\ast$.
 
Let $R$ be an integral domain of characteristic $0$. 
For such a ring the subfunctor
$\bSigma\subseteq\boT$ given by 
$\bSigma_U= |U|\cdot\boT_U$
is canonically isomorphic to $\buT$, i.e.,
there exists a canonical injective natural transformation
$j\colon\buT\to\boT$. We denote by $\boB=\coker(j)$
the cokernel of this canonical map. 

Let $\boX\in\ob(\MC_G(\RMod))$, and let $\phi\colon\boT\to\boX$
be a natural transformation. Then $\phi$ is uniquely determined by
$\phi_G\colon\boT_G\to\boX_G$, and every such morphism
defines a unique natural transformation $\phi\colon\boT\to\boX$.
Hence one has a canonical isomorphism
\begin{equation}
\label{eq:homT}
\nat_G(\boT,\boX)\simeq\boX_G.
\end{equation}
In a similar fashion one shows that 
\begin{equation}
\label{eq:homuT}
\nat_G(\buT,\boX)\simeq\boX_{\{1\}}^G.
\end{equation}

%%%%
\subsection{Invariants and coinvariants}
\label{ss:invcoinv}
There are two
standard procedures which turn a left $\RG$-module $M$
into a cohomological $G$-Mackey functor with values in the category
of $R$-modules. By $\boh^0(M)$ we denote 
what is called the {\it fixed-point-functor} in \cite{pj:simp}.
In more detail, one has
$\boh^0(M)_U=M^U$, for $U,V\subseteq G$, $V\subseteq U$,
$i_{U,V}^{\boh^0(M)}\colon M^U\to M^V$ is the canonical map,
$t_{V,U}^{\boh^0(M)}\colon M^V\to M^U$ is given by the transfer,
i.e., if $\caR\subseteq U$ denotes a system of coset representative of $U/V$
then $t_{V,U}^{\boh^0(M)}$ is given by multiplication with $\sum_{r\in\caR}r$,
and $c_{g,U}^{\boh^0(M)}\colon M^U\to M^{{}^gU}$ is left-multiplication by $g\in G$.

By $\boh_0(M)$ we denote the cohomological $G$-Mackey functors of {\it coinvariants}.
Thus $\boh_0(M)_U=M/\omega_{R[U]}M$, where $\omega_{R[U]}=\kernel(R[U]\to R)$
is the augmentation ideal in $R[U]$,
and for $U,V\subseteq G$, $V\subseteq U$, 
$t_{V,U}^{\boh_0(M)}\colon M_V\to M_U$ is the canonical map,
the map $i_{U,V}^{\boh_0(M)}\colon M_U\to M_V$ is induced by multiplication with
$\sum_{r\in\caR} r^{-1}$, and the map $c_{g,U}^{\boh^0(M)}\colon M_U\to M_{{}^gU}$ is
induced by multiplication with $g\in G$. E.g., one has canonical isomorphisms
of cohomological $G$-Mackey functors
$\buT\simeq\boh_0(R)$ and $\boT\simeq\boh^0(R)$, where
$R$ denotes the trivial left $\RG$-module.

%%%%
\subsection{Standard projective cohomological Mackey functors}
\label{ss:projmac}
By \S\ref{ss:proj}, one knows that for $W\subseteq G$ the functor
\begin{equation}
\label{eq:standproj}
\boP^W=\Hom_G(R[G/\argu],R[G/W])\in\ob(\MC_G(\RMod)),\ \ 
\end{equation}
where $\boP^W_U=\Hom_G(R[G/U],R[G/W])=R[G/W]^U$,
is projective in $\MC_G(\RMod)$.
These functors can be described as follows.

\begin{fact}
\label{fact:projcohmac}
Let $G$ be a finite group, and let $W\subseteq G$.
Then one has canonical isomorphisms
\begin{equation}
\label{eq:standproj2}
\boP^W\simeq\boh^0(R[G/W])\simeq\boh^0(\boP^W_{\{1\}})\simeq\Midn_W^G(\boT^W),
\end{equation}
where $\Midn_W^G(\argu)$ denotes the induction functor
in the category of Mackey functors \textup{(cf. \cite[\S 4]{pj:simp})},
and $\boT^W\in\ob(\MC_W(\RMod))$ is the cohomological
$W$-Mackey functor described in subsection~\ref{ss:ST}.
\end{fact}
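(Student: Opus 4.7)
The plan is to prove the three canonical isomorphisms of \eqref{eq:standproj2} in turn, with each step reducing to a standard adjunction or universal-property argument.

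For the first isomorphism $\boP^W\simeq\boh^0(R[G/W])$, I would invoke the classical adjunction
\[
\Hom_G(R[G/U],M)\longrightarrow M^U,\qquad \phi\longmapsto\phi(U),
\]
which is natural in the left $R[G]$-module $M$ and in $U\subseteq G$. Applied to $M=R[G/W]$ this already identifies the $R$-modules $\boP^W_U$ with $\boh^0(R[G/W])_U$. The content of the step is then the routine check that the structure maps agree: precomposition with $\eui_{V,U}\colon R[G/V]\to R[G/U]$ and $\eut_{U,V}\colon R[G/U]\to R[G/V]$ (cf.~\eqref{eq:mormf12}, \eqref{eq:mormf13}) translate respectively into the canonical inclusion $M^U\hookrightarrow M^V$ and the transfer $\sum_{r\in\caR}r\cdot\argu\colon M^V\to M^U$; and precomposition with $\rho_g^U$ (cf.~\eqref{eq:mormf11}) induces left multiplication by $g$ between $M^U$ and $M^{{}^gU}$. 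This is exactly the definition of $\boh^0(M)$ from \S\ref{ss:invcoinv}.

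The second isomorphism is purely formal: evaluating at the trivial subgroup gives $\boP^W_{\{1\}}=R[G/W]^{\{1\}}=R[G/W]$ as an $R$-module, and the conjugation maps $c_{g,\{1\}}^{\boP^W}$ endow this $R$-module with precisely the natural left $R[G]$-action on $R[G/W]$ (this has just been verified in the previous step). Hence $\boh^0(\boP^W_{\{1\}})$ and $\boh^0(R[G/W])$ coincide as cohomological $G$-Mackey functors.

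For the third isomorphism $\boP^W\simeq\Midn_W^G(\boT^W)$, my plan is to argue by Yoneda-type corepresentability. By Fact~\ref{fact:Pfirst} applied to $\caC=\mf_R(G)$, one has
\[
\nat_G(\boP^W,\boX)\simeq\boX_W,\qquad \boX\in\ob(\MC_G(\RMod)),
\]
naturally in $\boX$. On the other side, $\Midn_W^G$ is left-adjoint to the Mackey restriction functor $\rst^G_W\colon\MC_G(\RMod)\to\MC_W(\RMod)$ (cf.~\cite[\S 4]{pj:simp}), and \eqref{eq:homT} applied in $\MC_W(\RMod)$ yields $\nat_W(\boT^W,\boY)\simeq\boY_W$. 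Combining these gives
\[
\nat_G(\Midn_W^G(\boT^W),\boX)\simeq\nat_W(\boT^W,\rst^G_W(\boX))\simeq(\rst^G_W\boX)_W=\boX_W.
\]
Both $\boP^W$ and $\Midn_W^G(\boT^W)$ therefore corepresent the functor $\boX\mapsto\boX_W$, and the canonical isomorphism is the one matching the identity on $\boX=\boP^W$.

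The most delicate point I anticipate is the bookkeeping in the first step, verifying that the image of $\eut_{U,V}$ under the Yoneda identification really is given by the transfer formula appearing in $\boh^0$; this requires tracing an element $\sum_{r\in\caR}xrV\in R[G/V]$ through the Hom-adjunction carefully. The third step relies on invoking the Mackey-induction adjunction in the cohomological setting, but since \eqref{eq:homT} is already available this reduces to an abstract Yoneda argument rather than an explicit computation.
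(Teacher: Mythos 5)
Your proposal is correct, and it follows the route the paper itself takes implicitly: the statement is recorded as a \emph{Fact} without proof, the object-level identification $\boP^W_U=\Hom_G(R[G/U],R[G/W])=R[G/W]^U$ having already been noted in the sentence preceding it, and the remaining content being exactly the routine verification you carry out. Your three steps --- matching the structure maps $\eui_{V,U}$, $\eut_{U,V}$, $\rho^U_g$ with the inclusion, transfer and conjugation of $\boh^0(R[G/W])$; evaluating at the trivial subgroup; and identifying $\boP^W$ with $\Midn_W^G(\boT^W)$ by corepresentability via the induction--restriction adjunction (the ``Nakayama relations'' the paper invokes in Remark~\ref{rem:minproj} and \S\ref{ss:yonmac}) together with \eqref{eq:homT} --- are precisely the argument being taken for granted, so nothing is missing.
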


Both descriptions of the standard projective cohomological $G$-Mackey functors
will be useful for our purpose.
Note that one has canonical isomorphisms $\boP^G\simeq\boT$,
i.e., $\boT$ is projective. 
We also put $\boQ=\boP^{\{1\}}=\boh^0(\RG)$.

\begin{rem}
\label{rem:minproj}
Let $G$ be a finite $p$-group, and let $R$ be a discrete valuation
domain of characteristic $0$ with maximal ideal $pR$.
For $W\subseteq G$ there exists a simple cohomological $G$-Mackey functor
$\boS^W$ with values in the category of $R$-modules given by
\begin{equation}
\label{eq:simpmacfun}
\boS^W_U=
\begin{cases}
\F&\ \text{for $U= {}^gW$;}\\
0&\ \text{for $U\not= {}^gW$.}
\end{cases}
\end{equation}
In particular, for $U,V\subseteq G$, $V\subsetneq U$, one has
$i_{U,V}^{\boS^W}=0$ and $t_{V,U}^{\boS^W}=0$.
Moreover, any simple cohomological $G$-Mackey functor is 
isomorphic to some $\boS^W$, $W\subseteq G$ (cf. \cite{pj:simp}).
The Nakayama relations and \eqref{eq:homT}
show that for $V\subseteq G$ and $V\not={}^gW$ one has
\begin{equation}
\label{eq:simp1}
\nat_G(\boP^V,\boS^W)=\nat_V(\boT,\rst^G_V(\boS^W))\simeq \boS^W_V=0.
\end{equation}
On the other hand for $V={}^gW$ one has
\begin{equation}
\label{eq:simp2}
\nat_G(\boP^V,\boS^W)=\nat_V(\boT,\rst^G_V(\boS^W))\simeq \boS^W_V=\F.
\end{equation}
Hence $\boP^W$ is the (minimal) projective cover of $\boS^W$
for all $W\subseteq G$.
\end{rem}

%%%%
\subsection{Standard relative injective cohomological Mackey functors}
\label{ss:standinj}
Let $G$ be a finite group, and $W\subseteq G$.
The functor $\Midn_W^G(\argu)$ commutes with the functor $\argu^\ast$
on $R$-lattice functors, i.e., one has a natural isomorphism
\begin{equation}
\label{eq:natisoast}
\Midn_W^G(\argu^\ast)\simeq\Midn_W^G(\argu)^\ast\colon
\MC_W(\Rlat)\longrightarrow\MC_G(\Rlat)^{\op}.
\end{equation}
Thus 
$\boJ^W=(\boP^W)^\ast\simeq\Midn_W^G(\buT)\in\ob(\MC_G(\Rlat))$.

%%%%
\subsection{The Yoneda dual}
\label{ss:yonmac}
Let $\boX\in\ob(\MC_G(\Rlat))$. 
As $\boP^W$, $W\subseteq G$, takes values in the category of $R$-lattices,
the Nakayama relations and \eqref{eq:homuT}
yield canonical isomorphisms
\begin{equation}
\label{eq:yondumac}
\begin{aligned}
\nat_G(\boX,\boP^W)&\simeq\nat_G(\boJ^W,\boX^\ast)
\simeq\nat_G(\Midn_W^G(\buT),\boX^\ast)\\
&\simeq\nat_W(\buT,\rst^G_W(\boX^\ast))
\simeq(\boX_{\{1\}}^\ast)^W
\end{aligned}
\end{equation}
From this one concludes the following property (cf. Fact~\ref{fact:projcohmac}).

\begin{fact}
\label{fact:yondumac}
Let $\boX\in\ob(\MC_G(\Rlat))$. Then
$\boX^\circledast\simeq\boh^0(\boX^\ast_{\{1\}})$. 
\end{fact}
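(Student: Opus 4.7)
The plan is to upgrade the chain of canonical isomorphisms in \eqref{eq:yondumac} into a natural isomorphism in $\euF_R(\mf_R(G)^{\op},\RMod)$, using the identification \eqref{eq:maccon1} between $\MC_G(\RMod)$ and contravariant $R$-linear functors from $\mf_R(G)$.

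First, for each subgroup $W\subseteq G$ the composite displayed in \eqref{eq:yondumac} gives a canonical $R$-linear isomorphism
\[
\phi_W\colon\boX^\circledast(W)=\nat_G(\boX,\boP^W)\longrightarrow (\boX^\ast_{\{1\}})^W=\boh^0(\boX^\ast_{\{1\}})(W).
\]
Its construction combines the identification $\boP^W\simeq(\boJ^W)^\ast$ of \S\ref{ss:standinj} (together with the duality \eqref{eq:equiv}), the presentation $\boJ^W\simeq\Midn_W^G(\buT)$ (cf. Fact~\ref{fact:projcohmac} and \S\ref{ss:standinj}), the induction--restriction adjunction for cohomological Mackey functors, and the evaluation \eqref{eq:homuT}. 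Since $\boX$ is an $R$-lattice functor, each step of the chain is a natural isomorphism of $R$-modules.

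Second, I check that the family $(\phi_W)_{W\subseteq G}$ is natural in $W$, so that it provides a morphism $\boX^\circledast\to\boh^0(\boX^\ast_{\{1\}})$ of cohomological $G$-Mackey functors. Because $\mf_R(G)$ is generated as an $R$-order category by the morphisms $\rho^U_g$, $\eui_{V,U}$ and $\eut_{U,V}$ (cf. \eqref{eq:mormf11}--\eqref{eq:mormf13}), naturality has to be verified only for these three types of generators. By the definition of the Yoneda dual (\S\ref{ss:yondual}) and the antipode \eqref{eq:sigmamac}, the functor $\boX^\circledast$ transports these generators to post-composition with $\boP(\rho_{g^{-1}}^{{}^gU})$, $\boP(\eut_{U,V})$ and $\boP(\eui_{V,U})$ respectively. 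Under the identification \eqref{eq:maccon1} the functor $\boh^0(\boX^\ast_{\{1\}})$ sends the same generators to the conjugation, restriction and transfer maps on the $R[G]$-module $\boX^\ast_{\{1\}}$, as made explicit in \S\ref{ss:invcoinv}. For each of the three cases, the claimed commutativity of $\phi_\bullet$ with the structural maps is a short diagram chase along the four adjunction steps in \eqref{eq:yondumac}.

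I expect the main obstacle to be purely notational: keeping track of the antipode $\sigma$ and of the two dualizations involved (the functor $\argu^\ast$ and the Yoneda dual $\argu^\circledast$) while tracing how the morphisms $\eui_{V,U}$ and $\eut_{U,V}$ are swapped under $\sigma$. Once naturality on the three classes of generators has been checked, $\phi=(\phi_W)$ is a natural transformation whose components are all $R$-linear isomorphisms, and hence an isomorphism in $\euF_R(\mf_R(G)^{\op},\RMod)$, which is the claim.
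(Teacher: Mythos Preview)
Your proposal is correct and follows the same approach as the paper: the paper simply records the chain of isomorphisms \eqref{eq:yondumac} and then states Fact~\ref{fact:yondumac} as an immediate consequence (with a pointer to Fact~\ref{fact:projcohmac}), leaving the naturality in $W$ implicit. You are spelling out precisely that implicit step by checking compatibility with the generating morphisms $\rho^U_g$, $\eui_{V,U}$, $\eut_{U,V}$, which is exactly what is needed and is entirely routine.
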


%%%%%%%%%%%%%%%%%%
%%%%%last changes 6/9/2012
%%%%%%%%%%%%%%%%%%
\section{Section cohomology of cohomological Mackey functors}
\label{s:secmac}
If not stated otherwise 
$R$ will denote a commutative ring with unit $1_R\in R$.
Let $G$ be a finite group, and let $\boX\in\ob(\MC_G(\RMod))$.
For $U,V\subseteq G$, $V\triangleleft U$, one defines the
{\it section cohomology groups} of $\boX$ by
\begin{equation}
\label{eq:seccoh}
\begin{aligned}
\bok^0(U/V,\boX)&=\kernel(i_{U,V}^\boX),&\bok^1(U/V,\boX)&=\boX_V^U,\\
\boc_0(U/V,\boX)&=\coker(t_{V,U}^\boX),&\boc_1(U/V,\boX)&=\kernel(t_{V,U}^\boX)/\omega_{U/V}\boX_V.
\end{aligned}
\end{equation}
The following properties were established in \cite[\S 2.4]{tw:stand}.

\begin{prop}
\label{prop:sec}
Let $G$ be a finite group, let $U$ and $V$ be subgroups of $G$ such that
$V$ is normal in $U$, and let $\boX$ be a cohomological $G$-Mackey functor with values
in $\RMod$.
\begin{itemize}
\item[(a)] The canonical maps yield an exact sequence of $R$-modules
\begin{equation}
\label{eq:6term}
\xymatrix{
0\ar[r]&\boc_1(U/V,\boX)\ar[r]&
\hH^{-1}(U/V,\boX_V)\ar[r]&
\bok^0(U/V,\boX)\ar[d]\\
0&\bok^1(U/V,\boX)\ar[l]&\hH^{0}(U/V,\boX_V)\ar[l]&\boc_0(U/V,\boX)\ar[l]\\
}
\end{equation}
where $\hH^\bullet(U/V,\argu)$ denotes the Tate cohomology groups.
\item[(b)] Let $\xymatrix{0\ar[r]&\boX\ar[r]^\phi&\boY\ar[r]^\psi&\boZ\ar[r]&0}$ be a short exact sequence
of cohomological $G$-Mackey functors with values in $\RMod$. Then one has exact sequences
\begin{equation}
\label{eq:longk}
\xymatrix@R=3pt{
0\ar[r]&\bok^0(U/V,\boX)\ar[r]^{\bok^0(\phi)}&\bok^0(U/V,\boY)\ar[r]^{\bok^0(\psi)}&
\bok^0(U/V,\boZ)\ar[r]&\ldots\\
\ldots\ar[r]&\bok^1(U/V,\boX)\ar[r]^{\bok^1(\phi)}&\bok^1(U/V,\boY)\ar[r]^{\bok^1(\psi)}&
\bok^1(U/V,\boZ)
}
\end{equation}
and
\begin{equation}
\label{eq:longc}
\xymatrix@R=3pt{
&\boc_1(U/V,\boX)\ar[r]^{\boc_1(\phi)}&\boc_1(U/V,\boY)\ar[r]^{\boc_1(\psi)}&
\boc_1(U/V,\boZ)\ar[r]&\ldots\\
\ldots\ar[r]&\boc_0(U/V,\boX)\ar[r]^{\boc_0(\phi)}&\boc_0(U/V,\boY)\ar[r]^{\boc_0(\psi)}&
\boc_0(U/V,\boZ)\ar[r]&0.
}
\end{equation}
\end{itemize}
\end{prop}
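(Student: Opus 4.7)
The plan is to reduce part (a) to elementary properties of Tate cohomology of the $R[U/V]$-module $\boX_V$, and to deduce part (b) from three applications of the snake lemma.

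First I set up the structure underlying part (a). Since $V\triangleleft U$, axioms (cMF$_1$) and (cMF$_3$) make $\boX_V$ into an $R[U/V]$-module via the conjugation maps $c_{g,V}^\boX$ for $g\in U$. Axiom (cMF$_4$), specialized to $g\in U$ with $^gU=U$ and $^gV=V$ together with $c_{g,U}^\boX=\iid$, shows that $i_{U,V}^\boX$ lands in $\boX_V^U$; axiom (cMF$_5$) under the same specialization shows that $t_{V,U}^\boX$ vanishes on $\omega_{U/V}\boX_V$. Taking $W=V$ in (cMF$_6$) and using normality, the double coset sum collapses to the norm, giving $i_{U,V}^\boX\circ t_{V,U}^\boX=N_{U/V}$; axiom (cMF$_7$) gives $t_{V,U}^\boX\circ i_{U,V}^\boX=[U:V]\cdot\iid_{\boX_U}$.

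With these four identities in hand, part (a) is a direct construction. The inclusion $\ker(t)\subseteq\ker(N)$ (immediate from $N=i\circ t$) yields the map $\boc_1\hookrightarrow\hH^{-1}$. The transfer $t$ sends $\ker(N)$ into $\ker(i)$ and annihilates $\omega_{U/V}\boX_V$, so it induces $\hH^{-1}\to\bok^0$. The composition $\bok^0\hookrightarrow\boX_U\twoheadrightarrow\coker(t)=\boc_0$ is the third map. The restriction $i$ sends $\image(t)$ into $\image(N)$ and lands in $\boX_V^U$, giving $\boc_0\to\hH^0$. The final map $\hH^0\to\bok^1$ is the canonical one. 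Exactness at each of the six terms is then a routine diagram chase using only the four identities above; the only slightly delicate point is exactness at $\boc_0$, where one uses both $N=i\circ t$ (to identify the image from $\bok^0$) and the definition of the norm quotient.

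For part (b), given $0\to\boX\to\boY\to\boZ\to 0$ in $\MC_G(\RMod)$, evaluation at $U$ and at $V$ produces two short exact sequences of $R$-modules compatible with $i_{U,V}^{\bullet}$ and with $t_{V,U}^{\bullet}$. Applying the snake lemma to the three-by-three grid built from $i_{U,V}^{\bullet}$ at levels $U$ and $V$ yields a long exact sequence whose initial segment is the $\bok^0$ part; to continue into $\bok^1$ one uses that the image of $i$ lies in the invariants and applies left-exactness of $(\argu)^U$, which upgrades the cokernel terms into the six-term sequence involving $\boX_V^U$, $\boY_V^U$, $\boZ_V^U$. The sequence for $\boc$ is obtained dually: apply the snake lemma to the grid built from $t_{V,U}^{\bullet}$, then use right-exactness of the quotient by $\omega_{U/V}$ to identify the kernel terms with $\boc_1$.

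The main obstacle is not conceptual but notational: keeping track of which side of each exact sequence carries the connecting homomorphism, and checking that the boundary map produced by the snake lemma preserves the $U$-invariance or the divisibility by $\omega_{U/V}$ needed to land in $\bok^1$ or $\boc_1$ respectively. Both checks reduce to the naturality of $i$ and $t$ with respect to morphisms of Mackey functors, so no new identities beyond (cMF$_1$)–(cMF$_7$) are required.
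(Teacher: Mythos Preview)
Your approach is correct. The paper itself does not supply a proof of this proposition; it simply records that these properties were established in \cite[\S 2.4]{tw:stand}, so there is no argument in the paper to compare against. Your direct verification of part~(a) via the identities $i\circ t=N_{U/V}$ and $t\circ i=|U{:}V|\cdot\iid$ is the standard one, and your snake-lemma strategy for part~(b) is sound. One small point worth making explicit: in the $\bok$-sequence you first get, from the snake lemma applied to $i_{U,V}^{\bullet}$ between the rows $0\to\boX_U\to\boY_U\to\boZ_U\to 0$ and $0\to\boX_V\to\boY_V\to\boZ_V\to 0$, the full cokernels $\boX_V/i(\boX_U)$ rather than $\bok^1(U/V,\boX)=\boX_V^U/i(\boX_U)$; the passage to $\bok^1$ then needs the observation that the connecting map already lands in the $U$-invariants (because $\boX_V\cap\boY_V^U=\boX_V^U$), after which exactness persists by a short diagram chase. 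The dual remark applies to the $\boc$-sequence.
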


%%%%%%%%
\subsection{Section cohomology for cyclic subgroups}
\label{ss:cycsec}
Let $W$ be a non-trivial cyclic subgroup of the finite group $G$ generated by the element
$w\in W$. 
Taking coinvariants of the chain complex of $R[W]$-modules
$R[W]\overset{w-1}{\longrightarrow} R[W]$ yields an exact sequence
\begin{equation}
\label{eq:seccyc2}
\xymatrix@C=1.3truecm{
0\ar[r]&\boT^W\ar[r]^{\boP(\eut_{W,\{1\}})}&\boQ^W\ar[r]^{\boh^0(w-1)}&\boQ^W\ar[r]&\buT^W\ar[r]&0},
\end{equation}
(cf. \S\ref{ss:ST}, \S\ref{ss:projmac}).
If $R$ is an integral domain of characteristic $0$, one has
additionally a short exact sequence
\begin{equation}
\label{eq:seccyc1}
\xymatrix{
0\ar[r]&\buT^W\ar[r]&\boT^W\ar[r]&\boB^W\ar[r]&0}.
\end{equation}
Splicing together the short exact sequences \eqref{eq:seccyc2}
and \eqref{eq:seccyc1}
yields a projective resolution of the cohomological $W$-Mackey functor $\boB^W$.
Using this projective resolution,
Fact~\ref{fact:Pfirst}, \eqref{eq:homT} and \eqref{eq:homuT}
one concludes the following.

\begin{fact}
\label{fact:seccyc}
Let $R$ be a integral domain of characteristic $0$,
and let $W\subseteq G$ be cyclic subgroup of the finite group $G$. Then
for $k\in\{0,1\}$ one has canonical isomorphisms
\begin{equation}
\label{eq:seccyc3}
\begin{aligned}
\ext_G^k(\Midn_W^G(\boB^W),\boX)&\simeq \bok^k(W/\{1\},\boX),\\
\ext_G^{3-k}(\Midn_W^G(\boB^W),\boX)&\simeq \boc_k(W/\{1\},\boX).
\end{aligned}
\end{equation}
\end{fact}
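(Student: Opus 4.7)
The plan is to produce a length-$3$ projective resolution of $\Midn_W^G(\boB^W)$ in $\MC_G(\RMod)$ and then read off the $\Ext$ groups via Yoneda (Fact~\ref{fact:Pfirst}).

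Since $\boT^W\simeq\boP^W$ is projective in $\MC_W(\RMod)$ by Fact~\ref{fact:projcohmac}, splicing \eqref{eq:seccyc2} with \eqref{eq:seccyc1} along the surjection $\boQ^W\twoheadrightarrow\buT^W$ and the inclusion $\buT^W\hookrightarrow\boT^W$ gives a length-$3$ projective resolution
\[
0\to\boT^W\to\boQ^W\overset{\boh^0(w-1)}{\longrightarrow}\boQ^W\to\boT^W\to\boB^W\to 0
\]
in $\MC_W(\RMod)$. The induction functor $\Midn_W^G$ is exact and, by Fact~\ref{fact:projcohmac} together with the transitivity $\Midn_W^G\circ\Midn_{\{1\}}^W\simeq\Midn_{\{1\}}^G$, sends $\boT^W\mapsto\boP^W$ and $\boQ^W\mapsto\boQ$. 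Applying it therefore yields a projective resolution
\[
0\to\boP^W\to\boQ\to\boQ\to\boP^W\to\Midn_W^G(\boB^W)\to 0
\]
in $\MC_G(\RMod)$.

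Applying $\nat_G(-,\boX)$ and using Fact~\ref{fact:Pfirst} to identify $\nat_G(\boP^W,\boX)\simeq\boX_W$ and $\nat_G(\boQ,\boX)\simeq\boX_{\{1\}}$ produces the cochain complex
\[
0\to\boX_W\overset{d^0}{\longrightarrow}\boX_{\{1\}}\overset{d^1}{\longrightarrow}\boX_{\{1\}}\overset{d^2}{\longrightarrow}\boX_W\to 0.
\]
The technical heart of the argument is to identify these three differentials. By the naturality square \eqref{eq:yonnat2}, precomposition with a morphism $\boP^A\to\boP^B$ between standard projectives corresponds under Yoneda to $\boX$ applied to the underlying morphism of $\mf_R(G)$. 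Tracing through the construction of the splicing and invoking \eqref{eq:maccon1}, one checks that the map $\boQ\to\boP^W$ arises from $\eui_{\{1\},W}$, so $d^0=i_{W,\{1\}}^\boX$; that $\boh^0(w-1)$ corresponds to the element $w-1\in R[W]\subseteq R[G]\simeq\Hom_{\mf_R(G)}(\{1\},\{1\})$, so $d^1=c_{w,\{1\}}^\boX-\iid$, i.e.\ multiplication by $w-1$ on $\boX_{\{1\}}$ for the $R[W]$-action via conjugation; and that $\boP^W\to\boQ$ arises from $\eut_{W,\{1\}}$, so $d^2=t_{\{1\},W}^\boX$.

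Finally, since $W=\langle w\rangle$ is cyclic, $\omega_W\boX_{\{1\}}=(w-1)\boX_{\{1\}}$, and the cohomology of $\boX_W\overset{i_{W,\{1\}}^\boX}{\to}\boX_{\{1\}}\overset{w-1}{\to}\boX_{\{1\}}\overset{t_{\{1\},W}^\boX}{\to}\boX_W$ matches \eqref{eq:seccoh} as $\bok^0,\bok^1,\boc_1,\boc_0$ in degrees $0,1,2,3$, establishing the claimed $\ext_G^k\simeq\bok^k$ and $\ext_G^{3-k}\simeq\boc_k$ for $k\in\{0,1\}$. The main obstacle is the identification of the three differentials in the preceding paragraph; the rest is mechanical.
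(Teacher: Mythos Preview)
Your proof is correct and follows essentially the same approach as the paper: splice \eqref{eq:seccyc2} and \eqref{eq:seccyc1} into a length-$3$ projective resolution, then apply $\nat_G(-,\boX)$ and identify the resulting complex via Yoneda. The only cosmetic difference is that you first apply $\Midn_W^G$ to obtain a resolution in $\MC_G(\RMod)$ and then invoke Fact~\ref{fact:Pfirst} directly for $\boP^W$ and $\boQ$, whereas the paper's references to \eqref{eq:homT} and \eqref{eq:homuT} suggest working in $\MC_W(\RMod)$ via the Nakayama relations; these two routes are interchangeable and yield the same cochain complex $\boX_W\xrightarrow{i}\boX_{\{1\}}\xrightarrow{w-1}\boX_{\{1\}}\xrightarrow{t}\boX_W$.
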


Note that Fact~\ref{fact:seccyc} shows also that for a cyclic subgroup $W\subseteq G$ one 
can consider the groups $\bok^\bullet(W/\{1\},\argu)$, $\boc_{3-\bullet}(W/\{1\},\argu)$
together with the respective connecting homomorphisms as a cohomological functor 
(cf. \cite[\S XII.8]{mcl:hom}).

%%%%%%%%
\subsection{Cohomological Mackey functors of type $H^0$ and $H_0$}
\label{ss:h0}
Let $G$ be a finite group, and let $\boX\in\ob(\MC_G(\RMod))$.
Then $\boX$ will be called {\it $i$-injective}, if for all $U,V\subseteq G$, $V\subseteq U$,
the map $i^\boX_{U,V}$ is injective; i.e., 
$\boX$ is $i$-injective if, and only if, for all 
$U,V\subseteq G$, $V\triangleleft U$, one has $\bok^0(U/V,\boX)=0$.
Moreover, $\boX$ will be called {\it of type $H^0$} (or to satisfy {\it Galois descent}), 
if $\boX$ is $i$-injective
and $\bok^1(U/V,\boX)=0$ for all $U,V\subseteq G$, $V\triangleleft U$, i.e., 
$\boX$ is of type $H^0$ if, and only if,
one has a canonical isomorphism (induced by $i$)
\begin{equation}
\label{eq:hHo0}
\boX\simeq\boh^0(\boX_{\{1\}}).
\end{equation}
The cohomological $G$-Mackey functor $\boX$ will be called to be
{\it Hilbert$^{90}$}, if it is of type $H^0$ and
$H^1(U,\boX_{\{1\}})=0$ for every subgroup $U$ of $G$.
One has the following property.

\begin{prop}
\label{prop:H90}
Let $G$ be a finite group, and let $\boX\in\ob(\MC_G(\RMod))$ be 
Hilbert$^{90}$. Then for all $U,V\subset G$, $V\triangleleft U$,
one has $H^1(U/V,\boX_V)=0$.
\end{prop}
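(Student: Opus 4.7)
The plan is to deduce this from the ordinary five-term inflation-restriction exact sequence in group cohomology applied to the $\RG$-module $M = \boX_{\{1\}}$.

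First I would unpack what the Hilbert$^{90}$ hypothesis gives. Since $\boX$ is of type $H^0$, there is a canonical isomorphism $\boX \simeq \boh^0(\boX_{\{1\}})$, so in particular $\boX_V \simeq M^V$ where $M := \boX_{\{1\}}$ carries its natural $\RG$-module structure coming from the conjugation maps $c_{g,\{1\}}^{\boX}$. The second half of the Hilbert$^{90}$ condition says $H^1(U,M) = 0$ for every subgroup $U \subseteq G$ (using that restriction of $\boh^0(M)$ to $U$ is again $\boh^0$ of $M$ regarded as an $R[U]$-module).

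Next I would invoke the five-term exact sequence associated with the extension $1 \to V \to U \to U/V \to 1$ and the $R[U]$-module $M$:
\begin{equation*}
0 \longrightarrow H^1(U/V, M^V) \xrightarrow{\;\mathrm{inf}\;} H^1(U, M) \xrightarrow{\;\mathrm{res}\;} H^1(V, M)^{U/V} \longrightarrow H^2(U/V, M^V) \longrightarrow H^2(U,M).
\end{equation*}
Because $V \triangleleft U$, this is available. The left-hand injection together with the vanishing $H^1(U,M) = 0$ forces $H^1(U/V, M^V) = 0$, and identifying $M^V$ with $\boX_V$ via the type $H^0$ isomorphism yields $H^1(U/V, \boX_V) = 0$.

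The proof is essentially a translation exercise; the only potentially non-routine point is confirming that the $R[U]$-module structure on $\boX_V = M^V$ induced by the conjugation maps $c_{u,V}^{\boX}$ for $u \in U$ coincides with the one inherited from $M$ via the type $H^0$ identification. This however is immediate from the naturality clauses (cMF$_4$) and (cMF$_5$), which assert that $c$ commutes with $i$, so no real obstacle arises. Consequently I expect the entire proof to fit in a few lines once the inflation-restriction sequence is cited.
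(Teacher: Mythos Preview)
Your proposal is correct and follows essentially the same argument as the paper: invoke the five-term inflation--restriction sequence for $1\to V\to U\to U/V\to 1$ acting on $M=\boX_{\{1\}}$, use the Hilbert$^{90}$ vanishing $H^1(U,M)=0$ to kill $H^1(U/V,M^V)$, and then identify $M^V\simeq\boX_V$ via the type $H^0$ isomorphism. The paper's proof is just a terser version of what you wrote.
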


\begin{proof}
By the 5-term exact sequence, inflation $H^1(U/V,\boX_{\{1\}}^V)\to H^1(U,\boX_{\{1\}})$
is injective. 
Hence $H^1(U/V,\boX_{\{1\}}^V)=0$.
As $\boX$ is of type $H^0$, $\boX_{\{1\}}^V$  and $\boX_V$ are isomorphic $R[U/V]$-modules.
This yields the claim.
\end{proof}

In a similar fashion one calls $\boX$ to be {\it $t$-surjective},
if for all $U,V\subseteq G$, $V\subseteq U$,
the map $t^\boX_{V,U}$ is surjective; i.e., 
$\boX$ is $t$-surjective if, and only if, for all 
$U,V\subseteq G$, $V\triangleleft U$, one has $\boc_0(U/V,\boX)=0$.
The cohomological $G$-Mackey functor 
$\boX$ will be called {\it of type $H_0$} (or to satisfy {\it Galois co-descent}), if $\boX$ is $t$-surjective
and $\boc_1(U/V,\boX)=0$ for all $U,V\subseteq G$, $V\triangleleft U$, i.e., 
$\boX$ is of type $H_0$ if, and only if,
one has a canonical isomorphism (induced by $t$)
\begin{equation}
\label{eq:hHu0}
\boh_0(\boX_{\{1\}})\simeq\boX.
\end{equation}
Furthermore, $\boX$ will be called to be
{\it co-Hilbert$^{90}$}, if it is of type $H_0$ and for every subgroup $U$ of $G$ one has
$\hH^{-1}(U,\boX_{\{1\}})=0$.

\begin{rem}
\label{rem:projH0}
Every projective cohomological $G$-Mackey functor $\boP$
with values in the category of $R$-modules
is a direct summand of a coproduct of standard projective
cohomological $G$-Mackey functors.
Hence by Fact~\ref{fact:projcohmac} every 
projective cohomological $G$-Mackey functor $\boP$
is of type $H^0$.
However, if $R$ is an integral domain of characteristic $0$,
the Nakayama relations imply that
$H^1(G,R[\Omega])=0$ for any $G$-set $\Omega$.
In particular, $\boP$ is even Hilbert$^{90}$.
\end{rem}

The periodicity of Tate cohomology for finite cyclic groups has the following
consequence.

\begin{prop}
\label{prop:H90c}
Let $G$ be a finite group, and let $\boX\in\ob(\MC_G(\RMod))$ be
Hilbert$^{90}$. Let $U,V\subseteq G$, $V\triangleleft U$, be such that
$U/V$ is cyclic. Then $\boc_1(U/V,\boX)=0$.
\end{prop}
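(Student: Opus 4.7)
The plan is to exploit the six-term exact sequence of Proposition~\ref{prop:sec}(a) together with the $2$-periodicity of Tate cohomology for finite cyclic groups. From the former one obtains in particular an injection
\[
\boc_1(U/V,\boX)\hookrightarrow\hH^{-1}(U/V,\boX_V),
\]
so it suffices to prove that the right-hand side vanishes under the hypothesis that $U/V$ is cyclic and $\boX$ is Hilbert$^{90}$.

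Since $U/V$ is a finite cyclic group, Tate cohomology is $2$-periodic, and in particular there is a canonical isomorphism
\[
\hH^{-1}(U/V,\boX_V)\;\simeq\;\hH^{1}(U/V,\boX_V)\;=\;H^{1}(U/V,\boX_V).
\]
By Proposition~\ref{prop:H90}, the Hilbert$^{90}$ hypothesis on $\boX$ forces $H^{1}(U/V,\boX_V)=0$. Combining the displayed injection with this vanishing yields $\boc_1(U/V,\boX)=0$, as required.

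There is no real obstacle: the proposition is essentially an immediate consequence of Proposition~\ref{prop:sec}(a) and Proposition~\ref{prop:H90}, the only additional input being the well-known periodicity of Tate cohomology for finite cyclic groups. One should only take care that the identification of $\boX_V$ as an $R[U/V]$-module (implicit in the Tate cohomology groups $\hH^\bullet(U/V,\boX_V)$) is compatible with the way $\boX_V$ arises in the six-term sequence; this is built into the setup of \S\ref{s:secmac} and requires no separate verification.
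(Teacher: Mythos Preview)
Your proof is correct and follows essentially the same approach as the paper's: both use Proposition~\ref{prop:H90} together with the $2$-periodicity of Tate cohomology for cyclic groups to show $\hH^{-1}(U/V,\boX_V)=0$, and then invoke the six-term sequence~\eqref{eq:6term} to conclude that $\boc_1(U/V,\boX)=0$.
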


\begin{proof}
By Proposition~\ref{prop:H90} and the periodicity of Tate cohomology (of period $2$), one has
$\hH^{-1}(U/V,\boX_V)\simeq H^1(U/V,\boX_V)=0$.
Hence \eqref{eq:6term} yields the claim.
\end{proof}

%%%%%%%
\subsection{Tate duality}
\label{ss:tatedual}
Let $R$ be a principal ideal domain of characteristic $0$,
and let $K=\quot(R)$ denote its quotient field.
Then $\II=K/R$ is an injective $R$-module\footnote{This follows by an argument
similar to the proof of \cite[Cor.~III.7.3]{mcl:hom}.}
Let $G$ be a finite group, and let $M$ be a left $\RG$-lattice. Then one has
an exact sequence of left $\RG$-modules
\begin{equation}
\label{eq:tatedual1}
\xymatrix{
0\ar[r]&M^\ast\ar[r]&\Hom_R(M,K)\ar[r]&\Hom_R(M,\II)\ar[r]&0,
}.
\end{equation}
where $M^\ast=\Hom_R(M,R)$.
The following property is also known as {\it Tate duality}.

\begin{prop}
\label{prop:tatedual}
Let $R$ be a principal ideal domain of characteristic $0$,
let $K=\quot(R)$ be the quotient field of $R$, and let $\II=K/R$.
Let $G$ be a finite group, and let $M$ be an $\RG$-lattice.
Then for all $k\in\Z$ one has natural isomorphisms
\begin{equation}
\label{eq:tatedual2}
\hH^k(G,M^\ast)\simeq\Hom_R(\hH^{-k}(G,M),\II).
\end{equation}
\end{prop}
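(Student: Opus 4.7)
The plan is to derive the duality from the short exact sequence \eqref{eq:tatedual1} by combining a cohomological-triviality argument for the middle term with the tensor-hom adjunction against a complete projective resolution of $R$ over $\RG$.

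First, I would observe that $\Hom_R(M,K)$ has vanishing Tate cohomology in every degree. Since $R$ has characteristic zero, $K$ is a $\Q$-algebra, so $\Hom_R(M,K)$ is naturally a $K$-vector space via post-composition; in particular $|G|$ acts invertibly on it. Because every Tate cohomology group of a finite group is annihilated by $|G|$, we obtain $\hH^k(G,\Hom_R(M,K))=0$ for all $k\in\Z$, so the long exact Tate cohomology sequence of \eqref{eq:tatedual1} reduces to natural connecting isomorphisms
\begin{equation*}
\delta^{k-1}\colon\hH^{k-1}(G,\Hom_R(M,\II))\;\xrightarrow{\,\sim\,}\;\hH^k(G,M^\ast),\qquad k\in\Z.
\end{equation*}

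Next, I would fix a complete projective resolution $F_\bullet$ of the trivial $\RG$-module $R$, so that $\hH^j(G,N)=H^j(\Hom_{\RG}(F_\bullet,N))$ for every $\RG$-module $N$, and, for every $\RG$-lattice $L$, the homology of $F_\bullet\otimes_{\RG}L$ (with the standard twist-to-right convention) satisfies $H_j(F_\bullet\otimes_{\RG}L)=\hH^{-j-1}(G,L)$. The standard tensor-hom adjunction, using the diagonal $G$-action on $F_j\otimes_R M$, gives a natural isomorphism of cochain complexes
\begin{equation*}
\Hom_{\RG}(F_\bullet,\Hom_R(M,\II))\;\simeq\;\Hom_R(F_\bullet\otimes_{\RG}M,\II).
\end{equation*}

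Because $R$ is a principal ideal domain, $\II=K/R$ is divisible and therefore $R$-injective, so $\Hom_R(\argu,\II)$ is exact and commutes with homology. Taking $H^{k-1}$ of the display above then yields
\begin{equation*}
\hH^{k-1}(G,\Hom_R(M,\II))\;\simeq\;\Hom_R\bigl(H_{k-1}(F_\bullet\otimes_{\RG}M),\II\bigr)\;=\;\Hom_R(\hH^{-k}(G,M),\II),
\end{equation*}
and composing with $\delta^{k-1}$ produces the asserted duality. Naturality in $M$ is automatic, since each ingredient (the short exact sequence, the long exact Tate sequence, the tensor-hom adjunction, and the injectivity of $\II$) is functorial in $M$. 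The main subtlety lies in the index bookkeeping for complete resolutions: the shifts coming from the connecting map and from the homology-versus-cohomology identification must line up, which with the conventions above they precisely do.
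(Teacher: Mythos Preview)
Your proof is correct and follows the same two-step route as the paper: use vanishing of $\hH^\bullet(G,\Hom_R(M,K))$ together with the long exact Tate sequence of \eqref{eq:tatedual1} to obtain $\hH^{k-1}(G,\Hom_R(M,\II))\simeq\hH^k(G,M^\ast)$, and combine this with the duality $\hH^{k-1}(G,\Hom_R(M,\II))\simeq\Hom_R(\hH^{-k}(G,M),\II)$. The only difference is that the paper cites the second isomorphism as a known fact (Brown, Ex.~VI.7.4), whereas you spell out its proof via tensor--hom adjunction over a complete resolution and injectivity of $\II$.
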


\begin{proof}
It is well known that one has natural isomorphisms
\begin{equation}
\label{eq:tatedual3}
\hH^{k-1}(G,\Hom_R(M,\II))\simeq\Hom_R(\hH^{-k}(G,M),\II)
\end{equation}
for all $k\in\Z$ (cf. \cite[p.~148, Ex.~VI.7.4]{brown:coh}).
Moreover, as $\hH^k(G,\Hom_R(M,K))=0$ for all $k\in\Z$, one has also natural
isomorphisms
\begin{equation}
\label{eq:tatedual4}
\hH^{k-1}(G,\Hom_R(M,\II))\simeq \hH^k(G,M^\ast).
\end{equation}
This yields the claim.
\end{proof}

%%%%%%%

\subsection{Section cohomology of $R$-lattice functors}
\label{ss:seclat}
Let $R$ be an integral domain of characteristic $0$,
let $G$ be a finite group,
and let $\boX\in\ob(\MC_G(\Rlat))$ be an $R$-lattice functor.
For $U,V\subseteq G$, $V\subseteq U$, the axiom (cMF$_7$) (cf. \S\ref{ss:Gmac1})
implies that $\boX$ is $i$-injective. Hence by \eqref{eq:6term} one has
an isomorphism 
\begin{equation}
\label{eq:isolat}
\boc_1(U/V,\boX)\simeq\hH^{-1}(U/V,\boX_V)
\end{equation}
and a short exact sequence
\begin{equation}
\label{eq:23term}
\xymatrix{
0\ar[r]&\boc_0(U/V,\boX)\ar[r]&\hH^0(U/V,\boX_V)\ar[r]&\bok^1(U/V,\boX)\ar[r]&0.
}
\end{equation}
Let $R$ be a principal ideal domain,
and let $\phi\colon A\to B$ be a homomorphism of $R$-lattices.
Then $\phi$ is split injective if, and only if, $\phi^\ast\colon B^\ast\to A^\ast$ is surjective.
From this fact one concludes the following properties.

\begin{prop}
\label{prop:latsec}
Let $R$ be a principal ideal domain of characteristic $0$, let
$G$ be a finite group, and let $\boX\in\ob(\MC_G(\Rlat))$.
\begin{itemize}
\item[{\rm (a)}]  $\boX$ is of type $H^0$ if, and only if, $\boX^\ast$ is $t$-surjective.
\item[{\rm (b)}] The following are equivalent:
\begin{itemize}
\item[{\rm (i)}] $\boX$ is Hilbert$^{90}$;
\item[{\rm (ii)}] $\boX^\ast$ is of type $H_0$;
\item[{\rm (iii)}] $\boX^\ast$ is co-Hilbert$^{90}$.
\end{itemize}
\end{itemize}
\end{prop}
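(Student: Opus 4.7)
The plan is to derive both parts from two inputs: the PID duality fact (stated just before the proposition) that for an injection $\phi\colon A\hookrightarrow B$ of $R$-lattices, $\phi^\ast\colon B^\ast\to A^\ast$ is surjective iff $B/\phi(A)$ is $R$-torsion-free; and Tate duality (Proposition~\ref{prop:tatedual}) applied to the $R[U/V]$-lattices $\boX_V$.

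For part (a), note that $t^{\boX^\ast}_{V,U}=(i^\boX_{U,V})^\ast$ and that $\boX$ is automatically $i$-injective by axiom (cMF$_7$). Hence $\boX^\ast$ is $t$-surjective iff $\boX_V/i_{U,V}(\boX_U)$ is $R$-torsion-free for every $V\subseteq U$. If $\boX$ is of type $H^0$, the canonical isomorphism $\boX\simeq\boh^0(\boX_{\{1\}})$ identifies $i_{U,V}$ with the inclusion $\boX_{\{1\}}^U\hookrightarrow\boX_{\{1\}}^V$, and its quotient is torsion-free by the standard argument: for $x\in\boX_{\{1\}}^V$ and $0\neq r\in R$, if $rx$ is $U$-fixed then $r(gx-x)=0$ for every $g\in U$, forcing $gx=x$ in the lattice $\boX_{\{1\}}$. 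Conversely, assume $\boX_V/i(\boX_U)$ is torsion-free for every $V\triangleleft U$; for $x\in\boX_V^U$, axiom (cMF$_6$) specialized to $V\triangleleft U$ gives $i_{U,V}(t_{V,U}(x))=\sum_{g\in U/V}c_{g,V}(x)=|U:V|\cdot x$, so $|U:V|\cdot x\in i(\boX_U)$, and torsion-freeness together with $|U:V|\neq 0$ forces $x\in i(\boX_U)$, i.e.\ $\bok^1(U/V,\boX)=0$. Hence $\boX$ is of type $H^0$.

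For part (b), the key additional identity is
\[
\boc_1(U/V,\boX^\ast)\simeq\hH^{-1}(U/V,\boX_V^\ast)\simeq\Hom_R(H^1(U/V,\boX_V),\II),
\]
obtained by applying \eqref{eq:isolat} to the lattice functor $\boX^\ast$ and then Tate duality with $k=-1$; in particular $\boc_1(U/V,\boX^\ast)=0$ iff $H^1(U/V,\boX_V)=0$. Now (i)$\Rightarrow$(ii) follows by combining part (a) with Proposition~\ref{prop:H90}: Hilbert$^{90}$ gives $\boX$ of type $H^0$ (so $\boc_0(U/V,\boX^\ast)=0$) and $H^1(U/V,\boX_V)=0$ for every $V\triangleleft U$ (so $\boc_1(U/V,\boX^\ast)=0$), which together constitute type $H_0$ for $\boX^\ast$. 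For (ii)$\Rightarrow$(iii), specializing the displayed isomorphism to $V=\{1\}$ identifies $\hH^{-1}(U,(\boX^\ast)_{\{1\}})$ with $\boc_1(U/\{1\},\boX^\ast)$, which already vanishes by type $H_0$---precisely the missing condition for co-Hilbert$^{90}$. For (iii)$\Rightarrow$(i), co-Hilbert$^{90}$ entails type $H_0$, whose $t$-surjectivity plus part (a) yields $\boX$ of type $H^0$, while $\hH^{-1}(U,\boX_{\{1\}}^\ast)=0$ dualizes via Tate duality to $H^1(U,\boX_{\{1\}})=0$, completing Hilbert$^{90}$. The main subtlety is the mismatch between the "normal section only" clauses in type $H^0$/$H_0$/co-Hilbert$^{90}$ and the "all pairs $V\subseteq U$" clauses in $t$-surjectivity and $i$-injectivity: the forward direction of (a) handles this via the global identification $\boX\simeq\boh^0(\boX_{\{1\}})$, while the reverse direction survives with only the normal hypothesis because the Mackey identity $i\circ t=N_{U/V}$ is available precisely there.
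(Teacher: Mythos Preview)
Your proof is correct and follows essentially the same route as the paper: part~(a) via the split-injectivity/dual-surjectivity equivalence for maps of $R$-lattices, and part~(b) via the identification $\boc_1(U/V,\boX^\ast)\simeq\hH^{-1}(U/V,\boX_V^\ast)$ from \eqref{eq:isolat} combined with Tate duality. The paper compresses your explicit argument for (a) into the single observation that $i_{U,V}$ is split-injective iff $\bok^1(U/V,\boX)=0$, and the normal-versus-arbitrary-pair issue you flag is already absorbed into the paper's definitions of $t$-surjectivity and $i$-injectivity (which can be tested against $V=\{1\}$).
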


\begin{proof} 
Let $U,V\subseteq G$, $V\subseteq U$.

\noindent
(a) The map $i_{U,V}\colon \boX_U\to\boX_V$ is split-injective if, and only if,
$\bok^1(U/V,\boX)=0$. Hence the previously mentioned remark yields the claim.

\noindent
(b) Suppose that $\boX$ is Hilbert$^{90}$. Then
$H^1(U/V,\boX_V)=0$ for all $U,V\subseteq G$, $V\triangleleft U$ (cf. Prop.~\ref{prop:H90}). 
By Tate duality (cf. Prop.~\ref{prop:tatedual}), one has 
\begin{equation}
\label{eq:tatedual5}
\hH^{-1}(U/V,\boX_V^\ast)\simeq\Hom_R(H^1(U/V,\boX_V),\II_R)=0
\end{equation}
whenever $V$ is normal in $U$. Hence $\boc_1(U/V,\boX^\ast)=0$
for all $U,V\subseteq G$, $V\triangleleft U$
(cf. \eqref{eq:isolat}). Thus by (a), $\boX^\ast$ is of type $H_0$.
If $\boX^\ast$ is of type $H_0$, 
\eqref{eq:isolat} implies that $\hH^{-1}(U/V,\boX_V^\ast)=0$
for all $U,V\subseteq G$, $V\triangleleft U$, i.e., $\boX^\ast$ is co-Hilbert$^{90}$.
If $\boX^\ast$ is co-Hilbert$^{90}$, then
(a) implies that $\boX$ is of type $H^0$.
By Tate duality (cf. Prop.~\ref{prop:tatedual}), one has 
\begin{equation}
\label{eq:tatedual6}
H^{1}(U/V,\boX_V)\simeq\Hom_R(\hH^{-1}(U/V,\boX_V^\ast),\II_R)=0
\end{equation}
This yields the claim.
\end{proof}

%%%%%%%%%%
\subsection{Finite cyclic groups}
\label{ss:fincyc}
If $G$ is a finite group and $R$ is any commutative ring with unit $1$,
one has $\boP^{\{1\}}\simeq (\boP^{\{1\}})^\ast$,
i.e., $\boP^{\{1\}}$ is projective and relative injective.
If $G$ is a finite
cyclic group, and $W\subseteq G$ is a non-trivial subgroup
of $G$, applying $\Midn_W^G(\argu)$ 
to the exact sequence \eqref{eq:seccyc2}
yields an exact sequence 
\begin{equation}
\label{eq:exact2}
\xymatrix@C=1.6truecm{
0\ar[r]&\boP^W\ar[r]^{\boP(\eut_{W,\{1\}})}
&\boP^{\{1\}}\ar[r]^{\Midn^G_W(w-1)}&\boP^{\{1\}}\ar[r]&\boJ^W\ar[r]&0,
}
\end{equation}
where $w\in W$ is a generating element of $W$.
In particular, 
\begin{equation}
\label{eq:projinjdim}
\prdim(\boJ^W)\leq 2,\qquad W\subseteq G,\ W\not=\{1\},
\end{equation}
and $\prdim(\boJ^{\{1\}})=0$. Thus one has (cf. \S\ref{ss:gor}).

\begin{prop}
\label{prop:gorcm}
Let $R$ be a Dedekind domain, and let $G$ be a finite cyclic group.
Then $\mf_R(G)$ is $2$-Gorenstein.
\end{prop}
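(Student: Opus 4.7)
The plan is to observe that essentially all the work has already been carried out in the preceding discussion of \S\ref{ss:fincyc}, so the proof reduces to collecting the pieces and checking the definition. Recall from \S\ref{ss:gor} that $(\mf_R(G),\sigma)$ is $2$-Gorenstein precisely when $\prdim(\boJ^W) \leq 2$ for every subgroup $W \subseteq G$, where $\boJ^W = (\boP^W)^\ast$. Hence I would verify this bound case by case on $W$.

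First I would handle the trivial subgroup. The opening remark of \S\ref{ss:fincyc} asserts that $\boP^{\{1\}} \simeq (\boP^{\{1\}})^\ast$, so $\boJ^{\{1\}}$ is isomorphic to the projective $R$-lattice functor $\boP^{\{1\}}$ itself. Consequently $\prdim(\boJ^{\{1\}}) = 0$, which is well within the bound.

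For a non-trivial subgroup $W \subseteq G$, I would invoke the four-term exact sequence \eqref{eq:exact2},
\[
\xymatrix@C=1.4truecm{
0 \ar[r] & \boP^W \ar[r]^{\boP(\eut_{W,\{1\}})} & \boP^{\{1\}} \ar[r]^{\Midn_W^G(w-1)} & \boP^{\{1\}} \ar[r] & \boJ^W \ar[r] & 0,
}
\]
obtained by applying the exact induction functor $\Midn_W^G(\argu)$ to the resolution \eqref{eq:seccyc2} (using $\Midn_W^G(\boT^W) \simeq \boP^W$, $\Midn_W^G(\boQ^W) \simeq \boP^{\{1\}}$ and $\Midn_W^G(\buT^W) \simeq \boJ^W$, cf. Fact~\ref{fact:projcohmac} and \S\ref{ss:standinj}). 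This is by construction a projective resolution of $\boJ^W$ by projective $R$-lattice functors of length $2$, so $\prdim(\boJ^W) \leq 2$, which is the bound recorded in \eqref{eq:projinjdim}.

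Combining the two cases shows $\prdim(\boJ^W) \leq 2$ uniformly in $W$, and hence $\mf_R(G)$ is $2$-Gorenstein. There is no genuine obstacle here: the content of the proposition is really the construction of the length-$2$ projective resolution of $\boJ^W$, which was already carried out via the cyclic-group identity \eqref{eq:seccyc2} together with induction, and the role of the proof is simply to package this into the definition from \S\ref{ss:gor}.
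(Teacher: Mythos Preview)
Your proposal is correct and matches the paper's approach exactly: the paper does not give a separate proof but simply records the proposition as an immediate consequence of \eqref{eq:projinjdim} and the case $\prdim(\boJ^{\{1\}})=0$, which is precisely what you have written out in detail.
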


For  $\ext_G^k(\boJ^W,\boX)=\caR^k\!\nat_G(\boJ^W,\boX)$,
$\boX\in\ob(\MC_G(\RMod))$, $W\subseteq G$,  one obtains the following.

\begin{prop}
\label{prop:extJ}
Let $R$ be a Dedekind domain,
let $G$ be a finite cyclic group, and let $\boX\in\ob(\MC_G(\RMod))$. 
Then for $W\subseteq G$ one has
\begin{itemize}
\item[(i)] $\ext^0_G(\boJ^W,\boX)=\nat_G(\boJ^W,\boX)\simeq\boX_{\{1\}}^W$;
\item[(ii)] $\ext^1_G(\boJ^W,\boX)\simeq\boc_1(W/\{1\},\boX)$;
\item[(iii)] $\ext^2_G(\boJ^W,\boX)\simeq \boc_0(W/\{1\},\boX)$;
\end{itemize}
and $\ext^k_G(\boJ^W,\boX)=0$ for $k\geq 3$.
\end{prop}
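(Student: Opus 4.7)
The plan is to invoke the projective resolution \eqref{eq:exact2} of $\boJ^W$, which for any non-trivial $W\subseteq G$ has length at most $2$. This immediately yields $\ext^k_G(\boJ^W,\boX)=0$ for $k\geq 3$. The case $W=\{1\}$ is trivial: one has $\boJ^{\{1\}}\simeq\boP^{\{1\}}$, so it is projective, claim (i) reduces to Fact~\ref{fact:Pfirst} and \eqref{eq:homT}, and the identities $\boc_0(\{1\}/\{1\},\boX)=\boc_1(\{1\}/\{1\},\boX)=0$ take care of (ii) and (iii).

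Assume therefore $W\neq\{1\}$ and fix a generator $w$ of $W$. Applying $\nat_G(\argu,\boX)$ to \eqref{eq:exact2} and using Fact~\ref{fact:Pfirst} to identify $\nat_G(\boP^V,\boX)\simeq\boX_V$ for $V=\{1\},W$, the groups $\ext^k_G(\boJ^W,\boX)$ appear as the cohomology of a three-term cochain complex
\begin{equation*}
\boX_{\{1\}}\ \xrightarrow{\ d^0\ }\ \boX_{\{1\}}\ \xrightarrow{\ d^1\ }\ \boX_W.
\end{equation*}
The first step of the argument is to compute $d^0$ and $d^1$ explicitly. By naturality of the Yoneda isomorphism together with the dictionary \eqref{eq:maccon1}, the map $d^1$, induced by precomposition with $\boP(\eut_{W,\{1\}})$, is precisely $\tboX(\eut_{W,\{1\}})=t^\boX_{\{1\},W}$. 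The differential $d^0$, induced by $\Midn_W^G(w-1)$, translates via the identifications $\nat_G(\boP^{\{1\}},\boX)\simeq\nat_W(\boT^W,\rst^G_W\boX)\simeq\boX_{\{1\}}$ (cf.\ Fact~\ref{fact:projcohmac} and \eqref{eq:homT}) into multiplication by $w-1$ on the $R[W]$-module $\boX_{\{1\}}$.

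With these identifications in hand, the conclusions are immediate. First, $\ker(d^0)=\boX_{\{1\}}^W$, giving (i). Next, since $\omega_{R[W]}=(w-1)R[W]$ and $\boX_{\{1\}}$ is an $R[W]$-module, we have $\image(d^0)=\omega_{W/\{1\}}\boX_{\{1\}}$, and therefore
\begin{equation*}
\ext^1_G(\boJ^W,\boX)=\ker(t^\boX_{\{1\},W})/\omega_{W/\{1\}}\boX_{\{1\}}=\boc_1(W/\{1\},\boX),
\end{equation*}
giving (ii). Finally $\coker(d^1)=\coker(t^\boX_{\{1\},W})=\boc_0(W/\{1\},\boX)$, giving (iii). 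The main obstacle is the careful verification that the differential induced by $\Midn_W^G(w-1)$ corresponds to left-multiplication by $w-1$ on $\boX_{\{1\}}$; this is essentially a bookkeeping exercise with the Yoneda and Nakayama isomorphisms but requires attention to left/right conventions. Once this identification is in place, each of (i)--(iii) falls out of the three-term complex above.
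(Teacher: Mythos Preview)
Your argument is correct and follows the same route as the paper: use the length-$2$ projective resolution \eqref{eq:exact2} of $\boJ^W$, apply $\nat_G(\argu,\boX)$, and identify the resulting three-term cochain complex as \eqref{eq:exact3}. The paper condenses your identification of the differentials into the single phrase ``from the Nakayama relations and \eqref{eq:exact2}'', whereas you spell out why $d^1=t^\boX_{\{1\},W}$ via Yoneda naturality and \eqref{eq:maccon1}, and why $d^0$ is multiplication by $w-1$ via the induction--restriction adjunction; but the substance is identical.
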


\begin{proof}
For $W=\{1\}$, one has $\ext^1_G(\boJ^W,\boX)=\ext^2_G(\boJ^W,\boX)=0$, and
$\ext^0_G(\boJ^W,\boX)=\nat_G(\boJ^W,\boX)=\boX_{\{1\}}$ (cf. Fact~\ref{fact:Pfirst}).
Hence the claim holds in this case, and we may assume that $W\not=\{1\}$.
From the Nakayama relations and \eqref{eq:exact2} one concludes that
$\ext^k_G(\boJ^W,\boX)$ coincides with the $k^{th}$-cohomology of the cochain complex
\begin{equation}
\label{eq:exact3}
\xymatrix{
&0\ar[r]&\boX_{\{1\}}\ar[r]^{w-1}&\boX_{\{1\}}\ar[r]^{t^\boX_{\{1\},W}}&\boX_W\ar[r]&0}
\end{equation}
concentrated in degrees $0$, $1$ and $2$.
This yields the claim in case that $W\not=\{1\}$.
\end{proof}

From Proposition~\ref{prop:extJ} one obtains the following description of the higher
derived functors of the Yoneda dual.

\begin{prop}
\label{prop:Ryon}
Let $R$ be a principal ideal domain of characteristic $0$, 
let $G$ be a finite cyclic group, and let $\boX\in\ob(\MC_G(\Rlat))$
be a cohomological $G$-Mackey functor with values in the category of $R$-lattices.
Then the following are equivalent.
\begin{itemize}
\item[(i)] $\boX$ is Hilbert$^{90}$;
\item[(ii)] $\boX^\ast$ is co-Hilbert$^{90}$;
\item[(iii)] $\boX$ is $\circledast$-acyclic;
\item[(iv)] $\boX^\circledast$ is Hilbert$^{90}$;
\item[(v)] $\boX^\circledast$ is $\circledast$-acyclic.
\end{itemize}
In particular, $(\mf_R(G),\sigma)$ is a $\circledast$-symmetric $R^\circledast$-order category.
\end{prop}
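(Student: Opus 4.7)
The plan is to reduce all five conditions to section-cohomology statements at the pairs $W/\{1\}$ and then to translate via Tate duality. The equivalence (i) $\iff$ (ii) is already contained in Proposition~\ref{prop:latsec}(b), so the core task is to compute the derived functors $\caR^k(\boX)^\circledast$.

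The crucial input is a duality isomorphism
\[
\ext^k_G(\boX,\boP^W)\simeq\ext^k_G(\boJ^W,\boX^\ast),\qquad k\geq 0,
\]
valid for every $R$-lattice functor $\boX$ and every $W\subseteq G$. I would prove it by a double-complex argument modelled on the proof of Proposition~\ref{prop:gor}. Dualising the projective resolution \eqref{eq:exact2} of $\boJ^W$ gives a coresolution $0\to\boP^W\to\boS_0\to\boS_1\to\boS_2\to 0$ with $\boS_0=\boS_1=\boP^{\{1\}}$ and $\boS_2=\boJ^W$; combining it with a projective $R$-lattice resolution $\boP_\bullet\to\boX$ (Fact~\ref{fact:rlatproj}), set $D^{s,t}=\nat_G(\boP_s,\boS_t)$. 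The vertical spectral sequence degenerates since $\nat_G(\boP_s,-)$ is exact and yields $\ext^\bullet_G(\boX,\boP^W)$. For the horizontal one the input required is $\ext^s_G(\boX,\boJ^V)=0$ for $s\geq 1$; this follows from the identification $\nat_G(\boP_s,\boJ^V)\simeq\boP_s^\ast(V)$ (lattice-functor duality $\nat_G(\boA,\boB^\ast)\simeq\nat_G(\boB,\boA^\ast)$ combined with Yoneda) and from the fact that the $R$-dual of the projective $R$-module resolution $\boP_\bullet(V)$ of $\boX(V)$ is exact over a PID. The same lattice-functor duality then rewrites the resulting cochain complex as a projective resolution of $\boJ^W$ paired against $\boX^\ast$, giving $\ext^\bullet_G(\boJ^W,\boX^\ast)$.

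Combining this with Proposition~\ref{prop:extJ} yields $\caR^1(\boX)^\circledast(W)\simeq\boc_1(W/\{1\},\boX^\ast)$, $\caR^2(\boX)^\circledast(W)\simeq\boc_0(W/\{1\},\boX^\ast)$, and $\caR^k=0$ for $k\geq 3$ (matching the $2$-Gorenstein bound of Proposition~\ref{prop:gorcm}). So $\boX$ is $\circledast$-acyclic exactly when $t^{\boX^\ast}_{\{1\},W}$ is surjective and $\hH^{-1}(W,\boX^\ast_{\{1\}})=0$ for every $W\subseteq G$. The first condition, via the factorisation $t_{\{1\},U}=t_{V,U}\circ t_{\{1\},V}$ and Proposition~\ref{prop:latsec}(a), characterises $\boX$ as being of type $H^0$; the second, via Tate duality (Proposition~\ref{prop:tatedual}) and the $2$-periodicity of Tate cohomology for cyclic groups, characterises the vanishing $H^1(W,\boX_{\{1\}})=0$ for every $W$. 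Together they give (iii) $\iff$ (i), whence (i) $\iff$ (ii) $\iff$ (iii).

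Since $\boX^\circledast\simeq\boh^0(\boX^\ast_{\{1\}})$ by Fact~\ref{fact:yondumac} is itself an $R$-lattice functor, applying the already-proved (i) $\iff$ (iii) to $\boX^\circledast$ gives (iv) $\iff$ (v). Moreover $\boX^\circledast$ is automatically of type $H^0$, so (iv) reduces to $H^1(U,\boX^\ast_{\{1\}})=0$ for every $U$, which by Tate duality is the cohomological half of (i); this pins down the last equivalences. The $\circledast$-symmetry claim is then the implication (iii) $\Rightarrow$ (v), which is immediate from the chain of equivalences. The principal obstacle is the double-complex computation establishing the Ext-isomorphism; the rest is routine translation through Tate duality and the section-cohomology dictionary of \S\ref{ss:seclat}.
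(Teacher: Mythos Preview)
Your approach mirrors the paper's closely. Both hinge on the identity
\[
\ext^k_G(\boX,\boP^W)\;\simeq\;\ext^k_G(\boJ^W,\boX^\ast)
\]
(the paper's \eqref{eq:acyc1}), after which Proposition~\ref{prop:extJ} translates $\circledast$-acyclicity into the vanishing of $\boc_0(W/\{1\},\boX^\ast)$ and $\boc_1(W/\{1\},\boX^\ast)$. Your double-complex justification of this identity is correct and more explicit than the paper's bare assertion; the key input $\ext^s_G(\boX,\boJ^V)=0$ for $s\geq 1$ is exactly the relative-injectivity statement underlying Proposition~\ref{prop:gor}. Your derivations of (i)$\iff$(ii)$\iff$(iii), of (iv)$\iff$(v), and of (i)$\Rightarrow$(iv) all match the paper.

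There is, however, a genuine gap at (iv)$\Rightarrow$(i), and you flag it yourself: you correctly reduce (iv) to the vanishing of $H^1(U,\boX_{\{1\}})$ for every $U$, call it ``the cohomological half of (i)'', and then assert that this ``pins down the last equivalences''. It does not. The missing half of (i) is that $\boX$ be of type $H^0$, and this does not follow from~(iv): take $\boX=\buT$, an $R$-lattice functor with $\buT_{\{1\}}=R$ trivial, so $H^1(U,R)=0$ and $\buT^\circledast=\boh^0(R)=\boT$ is projective, hence Hilbert$^{90}$; yet $\bok^1(G/\{1\},\buT)=R/|G|R\neq 0$ whenever $|G|$ is not a unit in $R$, so $\buT$ fails to be of type $H^0$ and (i) fails while (iv) holds. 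The paper's own argument (``replacing $\boX$ by $\boX^\circledast$'') has the same defect: it only yields that $\boX^{\circledast\circledast}=\boh^0(\boX_{\{1\}})$ is Hilbert$^{90}$, and this functor agrees with $\boX$ precisely when $\boX$ is already of type $H^0$. What both arguments \emph{do} establish is (iii)$\Rightarrow$(i)$\Rightarrow$(iv)$\Rightarrow$(v), and that chain is exactly the ``In particular'' clause on $\circledast$-symmetry, which is the only consequence invoked later (in Theorem~\ref{thm:whitehead} and Theorem~\ref{thm:defcat}).
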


\begin{proof}
By Proposition~\ref{prop:latsec}(b), (i) and (ii) are equivalent.
For $W\subseteq G$ one has
\begin{equation}
\label{eq:acyc1}
\caR^k(\boX)^\circledast_W=\ext_G^k(\boX,\boP^W)\simeq\ext_G^k(\boJ^W,\boX^\ast).
\end{equation}
Hence Proposition~\ref{prop:extJ} implies that (ii) and (iii) are equivalent,
and thus also (iv) and (v) are equivalent.
By Fact~\ref{fact:yondumac}, $\boX^\circledast\simeq\boh^0(\boX^\ast_{\{1\}})$.
Let $W\subseteq G$. The periodicity
of Tate cohomology (or period 2) and Tate duality (cf. \eqref{eq:tatedual2}) imply that
\begin{equation}
\label{eq:tatedual7}
H^1(W,\boX_{\{1\}}^\ast)\simeq
\hH^{-1}(W,\boX_{\{1\}}^\ast)\simeq\Hom_R(H^1(W,\boX_{\{1\}}),\II_R).
\end{equation}
Hence (i) implies (iv). Replacing $\boX$ by $\boX^\circledast$ shows that (iv) implies (i). 
This yields the claim.
\end{proof}

The following property will allow us to analyze
the projective dimensions of cohomological
Mackey functors for finite cyclic groups.

\begin{prop}
\label{prop:cycint}
Let $R$ be a Dedekind domain of characteristic $0$, let
$G$ be a finite cyclic group, and let $\phi\colon\boP\to\boX$
be a surjective natural transformation in $\MC_G(\RMod)$,
where $\boP$ is a projective $R$-lattice functor. Then
\begin{itemize}
\item[(a)] $\kernel(\phi)$ is an $R$-lattice functor;
\item[(b)] if $\boX$ is $i$-injective, $\kernel(\phi)$ is of type $H^0$;
\item[(c)] if $\boX$ is of type $H^0$, $\kernel(\phi)$ is Hilbert$^{90}$.
\end{itemize}
\end{prop}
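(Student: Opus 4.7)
The plan is to set $\boK=\kernel(\phi)$ and derive all three claims from two basic properties of the projective $R$-lattice functor $\boP$: first, that every $\boP_U$ is an $R$-lattice; second, that $\boP$ is Hilbert$^{90}$ (Rem.~\ref{rem:projH0}). The conclusions for $\boK$ will then follow by chasing, respectively, the long exact section-cohomology sequence~\eqref{eq:longk} and the long exact sequence in group cohomology attached to $0\to\boK\to\boP\to\boX\to 0$.

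Part (a) uses only the Dedekind hypothesis: $\boK_U$ is a submodule of the $R$-lattice $\boP_U$, hence finitely generated (as $R$ is Noetherian) and torsion-free, and therefore an $R$-lattice. Given (a), part (b) is essentially automatic. Since $\boK$ is an $R$-lattice functor over an integral domain of characteristic $0$, axiom (cMF$_7$) makes it $i$-injective (cf. \S\ref{ss:seclat}), so $\bok^0(U/V,\boK)=0$ for every normal section $V\triangleleft U$. Applying~\eqref{eq:longk} to $0\to\boK\to\boP\to\boX\to 0$ produces a fragment
\begin{equation*}
\bok^0(U/V,\boP)\longrightarrow\bok^0(U/V,\boX)\longrightarrow\bok^1(U/V,\boK)\longrightarrow\bok^1(U/V,\boP)
\end{equation*}
whose outer terms vanish because $\boP$ is of type $H^0$; the hypothesis that $\boX$ is $i$-injective forces $\bok^0(U/V,\boX)=0$, and hence $\bok^1(U/V,\boK)=0$. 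Together with the vanishing of $\bok^0(U/V,\boK)$ this shows $\boK$ is of type $H^0$.

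For (c), evaluation at $\{1\}$ yields a short exact sequence of $R[G]$-modules $0\to\boK_{\{1\}}\to\boP_{\{1\}}\to\boX_{\{1\}}\to 0$, and for each $U\subseteq G$ the associated long exact sequence in $U$-cohomology contains
\begin{equation*}
H^0(U,\boP_{\{1\}})\xrightarrow{H^0(U,\phi_{\{1\}})}H^0(U,\boX_{\{1\}})\longrightarrow H^1(U,\boK_{\{1\}})\longrightarrow H^1(U,\boP_{\{1\}})=0,
\end{equation*}
the right-hand vanishing again coming from $\boP$ being Hilbert$^{90}$. It therefore suffices to prove that $H^0(U,\phi_{\{1\}})$ is surjective, and this is the only non-formal point of the argument. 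The assumption that $\boX$ is of type $H^0$ (together with the fact that $\boP$ is so by Rem.~\ref{rem:projH0}) provides, via~\eqref{eq:hHo0}, canonical isomorphisms $\boP_U\simeq H^0(U,\boP_{\{1\}})$ and $\boX_U\simeq H^0(U,\boX_{\{1\}})$ compatible with $\phi$; under these identifications $H^0(U,\phi_{\{1\}})$ becomes $\phi_U\colon\boP_U\to\boX_U$, which is surjective by hypothesis. Invoking (b)---applicable because type $H^0$ implies $i$-injective---completes the proof that $\boK$ is Hilbert$^{90}$.
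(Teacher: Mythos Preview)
Your proof is correct. Parts (a) and (b) match the paper's argument essentially verbatim: the paper also derives the isomorphism $\bok^0(U/V,\boX)\simeq\bok^1(U/V,\boK)$ from the vanishing of $\bok^0(U/V,\boP)$ and $\bok^1(U/V,\boP)$.

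For part (c) you take a genuinely different, and in fact more elementary, route. The paper stays inside the section-cohomology formalism: it invokes Fact~\ref{fact:seccyc} (which needs the cyclic structure) to splice the $\bok$- and $\boc$-sequences into a single long exact sequence, obtains the isomorphism $\bok^1(U/V,\boX)\simeq\boc_1(U/V,\boK)$, and then uses \eqref{eq:isolat} together with the period-$2$ Tate periodicity to convert $\boc_1(U/V,\boK)=0$ into $H^1(U/V,\boK_V)=0$. You instead evaluate at $\{1\}$ and run the ordinary group-cohomology long exact sequence, with the surjectivity of $H^0(U,\phi_{\{1\}})$ coming directly from the type-$H^0$ hypothesis on $\boX$ (and on $\boP$). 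This bypasses both Fact~\ref{fact:seccyc} and Tate periodicity; in particular your argument for (c) never uses that $G$ is cyclic, so it actually proves the proposition for an arbitrary finite group. The paper's approach, by contrast, keeps everything phrased in the Mackey-functor section-cohomology language, which meshes more smoothly with the surrounding machinery but at the cost of invoking the cyclic-specific ingredients.
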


\begin{proof}
For (a) there is nothing to prove.
Put $\boK=\kernel(\phi)$, and let $U,V\subseteq G$, $V\subseteq U$. 
By Remark~\ref{rem:projH0} and Fact~\ref{fact:seccyc}, one has an exact sequence
\begin{equation}
\label{eq:seccyc4}
\xymatrix{
\boc_1(U/V,\boX)\ar[d]&0\ar[l]&\\
\boc_0(U/V,\boK)\ar[r]&
\boc_0(U/V,\boP)\ar[r]&
\boc_0(U/V,\boX)\ar[r]&0
}
\end{equation}
and isomorphisms
\begin{align}
\bok^0(U/V,\boX)&\simeq\bok^1(U/V,\boK),\label{eq:seccyc5}\\
\bok^1(U/V,\boX)&\simeq\boc_1(U/V,\boK).\label{eq:seccyc6}
\end{align}
Hence \eqref{eq:seccyc5} implies (b). If $\boX$ is of type $H^0$,
\eqref{eq:seccyc6} yields that $\boc_1(U/V,\boK)=0$.
Thus by \eqref{eq:isolat} and the periodicity of Tate cohomology (of period $2$),
one has
\begin{equation}
\label{eq:seccyc7}
H^1(U/V,\boK_V)\simeq \hH^{-1}(U/V,\boK_V)\simeq\boc_1(U/V,\boK)=0.
\end{equation}
This yields the claim.
\end{proof}

The following property will turn out to be useful for our purpose.

\begin{prop}
\label{prop:torhil}
Let $R$ be an integral domain of characteristic $0$,
and let $p\in R$. Assume further that $G$ is a finite cyclic group,
and that $\boX\in\ob(\MC_G(\Rlat))$ is an $R$-lattice functor
with the Hilbert$^{90}$ property. Then
$\boY=\boX/p\boX$ is of type $H^0$.
\end{prop}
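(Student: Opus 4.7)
The plan is to identify $\boY$ explicitly with the fixed-point functor $\boh^0(\boY_{\{1\}})$, which by definition is the very condition that $\boY$ be of type $H^0$. Put $M = \boX_{\{1\}}$; since $\boX$ is Hilbert$^{90}$, it is in particular of type $H^0$, so the canonical $i$-induced natural transformation gives $\boX \simeq \boh^0(M)$, and $\boX_U = M^U$ for every subgroup $U \subseteq G$. The cases $p = 0$ (where $\boY = \boX$) and $p \in R^\times$ (where $\boY = 0$) satisfy the conclusion trivially, so one may assume that $p$ is a non-zero non-unit.

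First I would note that each $\boX_U$ is an $R$-lattice, hence $R$-torsion-free as $R$ is an integral domain, so multiplication by $p$ is injective on $\boX$. Since evaluation at each $U$ is exact on $\MC_G(\RMod)$, the short exact sequence
\begin{equation*}
0 \longrightarrow \boX \overset{p}{\longrightarrow} \boX \longrightarrow \boY \longrightarrow 0
\end{equation*}
of cohomological Mackey functors produces canonical isomorphisms $\boY_U \simeq M^U/pM^U$ for every $U \subseteq G$.

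Next I would invoke the Hilbert$^{90}$ hypothesis $H^1(U,M) = 0$. The long exact sequence of $U$-cohomology associated to the short exact sequence $0 \to M \xrightarrow{p} M \to M/pM \to 0$ of $R[U]$-modules then collapses to a canonical isomorphism $M^U/pM^U \simeq (M/pM)^U$. Recalling that $\boY_{\{1\}} = M/pM$, composition yields natural isomorphisms $\boY_U \simeq (\boY_{\{1\}})^U = \boh^0(\boY_{\{1\}})_U$ for all $U$.

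Finally I would check that these pointwise isomorphisms intertwine the structure maps $i^\boY$, $t^\boY$ and $c^\boY$ with those of $\boh^0(\boY_{\{1\}})$. Since the identification $\boX \simeq \boh^0(M)$ already realises $i_{U,V}^\boX$ as the inclusion $M^U \hookrightarrow M^V$, $t_{V,U}^\boX$ as the norm $N_{U/V}$, and $c_{g,U}^\boX$ as left multiplication by $g$, passing to the quotient modulo $p$ produces exactly the structure maps of $\boh^0(M/pM)$. This assembles into an isomorphism $\boY \simeq \boh^0(\boY_{\{1\}})$ of cohomological Mackey functors, proving the claim. The main substance of the argument is the single cohomological input $H^1(U,M) = 0$; I do not foresee a more serious obstacle, as the rest is a routine compatibility verification.
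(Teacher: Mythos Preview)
Your proof is correct and takes a more direct route than the paper's. The paper argues within its section-cohomology framework: it applies the long exact sequence of Fact~\ref{fact:seccyc} to the short exact sequence $0 \to \boX \xrightarrow{p} \boX \to \boY \to 0$, obtaining a sequence that threads through $\bok^0$, $\bok^1$, $\boc_1$, $\boc_0$, and then reads off $\bok^0(U/V,\boY) = \bok^1(U/V,\boY) = 0$ from the vanishing of $\bok^0$, $\bok^1$, $\boc_1$ for $\boX$ (the last of these requiring Proposition~\ref{prop:H90c}). You instead unpack the definition of type~$H^0$ at the outset, writing $\boX = \boh^0(M)$ and using only the ordinary group-cohomology long exact sequence together with the single hypothesis $H^1(U,M)=0$ to identify $\boY$ with $\boh^0(M/pM)$ directly. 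Your argument is self-contained and bypasses the $\boc_1$-vanishing step entirely; the paper's version has the advantage of staying inside the section-cohomology formalism it has set up, but at the cost of invoking more machinery.
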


\begin{proof}
We may suppose that $p\not=0$. Then $p\boX\simeq\boX$.
Let $U,V\in G$, $V\subseteq U$. By Fact~\ref{fact:seccyc}, one has
a long exact sequence
\begin{equation}
\label{eq:seccyc8}
\xymatrix{
\bok^0(U/V,\boY)\ar[d]&\bok^0(U/V,\boX)\ar[l]&\bok^0(U/V,\boX)\ar[l]_{p}&0\ar[l]\\
\bok^1(U/V,\boX)\ar[r]^p&
\bok^1(U/V,\boX)\ar[r]&
\bok^1(U/V,\boY)\ar[d]&\\
\boc_1(U/V,\boY)\ar[d]&\boc_1(U/V,\boX)\ar[l]&\boc_1(U/V,\boX)\ar[l]_{p}\\
\boc_0(U/V,\boX)\ar[r]^p&
\boc_0(U/V,\boX)\ar[r]&
\boc_0(U/V,\boY)\ar[r]&0
}
\end{equation}
As $\bok^0(U/V,\boX)=\bok^1(U/V,\boX)=\boc_1(U/V,\boX)=0$,
one concludes that $\bok^0(U/V,\boY)=\bok^1(U/V,\boY)=0$. This yields the claim.
\end{proof}

%%%%%%%

\subsection{Injectivity criteria}
\label{ss:injcrit}
For a finite $p$-group $G$ there are useful criteria ensuring that
a homomorphism $\phi\colon\boX\to\boY$ of cohomological $G$-Mackey functors
is injective. These criteria are based on the following fact.

\begin{fact}
\label{fact:soctriv}
Let $G$ be a finite $p$-group, let $\F$ be a field of characteristic
$p$, and let $M$ be a non-trivial, finitely generated
left $\F[G]$-module. Let $B\subseteq M$ be an $\F[G]$-submodule
satisfying $B\cap M^G=0$. Then $B=0$.
\end{fact}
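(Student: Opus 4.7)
The plan is to prove the contrapositive: any non-zero $\F[G]$-submodule $B$ of $M$ must satisfy $B \cap M^G \neq 0$. In fact it suffices to show the stronger statement $B^G \neq 0$, since $B^G = B \cap M^G$; this is just the classical fact that a non-zero finite-dimensional module over $\F[G]$ has non-zero $G$-fixed points when $G$ is a $p$-group and $\F$ has characteristic $p$, applied to $B$ in place of $M$.

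First I would observe that, since $G$ is finite, $\F[G]$ is a finite-dimensional $\F$-algebra; hence the finitely generated $\F[G]$-module $M$, and therefore any submodule $B\subseteq M$, is finite-dimensional as an $\F$-vector space. This reduces the task to the standard modular fixed-point theorem for $p$-groups.

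To prove $B^G \neq 0$ for a non-zero finite-dimensional $\F[G]$-module $B$, I would use induction on $|G|$. The case $|G|=1$ is vacuous. For the inductive step, choose a central subgroup $Z\leq G$ of order $p$; this exists because the center of a non-trivial finite $p$-group is non-trivial by the class equation. A generator $z$ of $Z$ satisfies $(z-1)^p = z^p - 1 = 0$ in $\F[Z]$ (using that $\F$ has characteristic $p$), so its action on $B$ is unipotent, which forces $B^Z = \kernel(z - 1) \neq 0$. Because $Z$ is central in $G$, the subspace $B^Z$ is $G$-stable and inherits the structure of an $\F[G/Z]$-module. Applying the inductive hypothesis to the $p$-group $G/Z$ acting on the non-zero module $B^Z$ yields $(B^Z)^{G/Z} \neq 0$, i.e.\ $B^G \neq 0$, completing the induction.

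The only non-routine input is the existence of a central subgroup of order $p$ in a non-trivial finite $p$-group, which is a standard consequence of the class equation; everything else is mechanical.
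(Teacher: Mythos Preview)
Your proof is correct. The paper's argument is the same fact packaged differently: it observes that $\F[G]$ is artinian and that every irreducible $\F[G]$-module is trivial, so $\soc(B)=B^G$; then $B\cap M^G=0$ forces $\soc(B)=0$ and hence $B=0$. Your inductive argument via a central subgroup of order $p$ is simply a self-contained proof of the underlying fixed-point statement (equivalently, of the fact that the only simple $\F[G]$-module is trivial), whereas the paper quotes that fact and uses socle language; the mathematical content is identical.
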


\begin{proof} The $\F$-algebra $\F[G]$ is artinian. 
Moreover, as every irreducible left $\F[G]$-module is isomorphic to the trivial left $\F[G]$-module,
for any finitely generated left $\F[G]$-module $B$ one has $\soc(B)=B^G$.
Hence the hypothesis implies $\soc_G(B)=0$. Thus $B=0$.
\end{proof}

From Fact~\ref{fact:soctriv} one concludes the following injectivity criterion.

\begin{lem}
\label{lem:injmod}
Let $G$ be a finite $p$-group, and  let $\F$ be a field of characteristic $p$. 
Suppose that for 
$\phi\colon\boX\to\boY\in\mor(\MC_G(\FMod))$ one has that
\begin{itemize}
\item[(i)] $\phi_G\colon \boX_G\to\boY_G$ is injective, and 
\item[(ii)] $\boX$ is of type $H^0$, and $\boY$ is $i$-injective.
\end{itemize}
Then $\phi$ is injective.
\end{lem}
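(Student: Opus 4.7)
Let $\boK = \kernel(\phi)$, which inherits the structure of a sub-Mackey-functor of $\boX$. I would show $\boK_U = 0$ for every $U \subseteq G$ by downward induction on $|U|$; the base case $U = G$ is precisely hypothesis (i).

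For the inductive step, fix a proper subgroup $U \subsetneq G$ and assume $\boK_H = 0$ for every $H \supsetneq U$. Since $G$ is a finite $p$-group, I can choose $H$ with $U \triangleleft H$ and $[H:U] = p$. Conjugation by $H$ makes $\boX_U$ into an $\F[H/U]$-module via the maps $c^\boX_{h,U}$ (axioms (cMF$_1$) and (cMF$_3$)), and naturality of $\phi$ ensures that $\boK_U$ is an $\F[H/U]$-submodule. By Fact~\ref{fact:soctriv} applied to the finite $p$-group $H/U$ acting on $\boX_U$, it then suffices to show that the invariants $\boK_U^{H/U}$ vanish.

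For this, take any $x \in \boK_U^{H/U} \subseteq \boX_U^{H/U}$. Since $\boX$ is of type $H^0$, Galois descent yields $y \in \boX_H$ with $i^\boX_{H,U}(y) = x$. Naturality of $\phi$ then gives $i^\boY_{H,U}(\phi_H(y)) = \phi_U(x) = 0$, and the $i$-injectivity of $\boY$ forces $\phi_H(y) = 0$, so $y \in \boK_H$. By the inductive hypothesis $y = 0$, hence $x = 0$, completing the induction.

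No genuine obstacle arises here: once Fact~\ref{fact:soctriv} is in hand, the type-$H^0$ property of $\boX$ and the $i$-injectivity of $\boY$ combine cleanly through the naturality square of $\phi$. The only mild technical ingredient is the existence of a subgroup $H$ with $U \triangleleft H$ and $[H:U] = p$, which is guaranteed by the subnormality of proper subgroups in a finite $p$-group.
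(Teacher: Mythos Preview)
Your proof is correct. The paper's argument differs in organization: rather than a downward induction through index-$p$ overgroups, it first treats in one stroke all subgroups $V$ that are normal in $G$, using the commutative square
\[
\xymatrix{
\boX_G\ar[r]^{\phi_G}\ar[d]_{i^{\boX}_{G,V}}&\boY_G\ar[d]^{i^{\boY}_{G,V}}\\
\boX_V\ar[r]^{\phi_V}&\boY_V
}
\]
together with $\image(i^\boX_{G,V}) = \boX_V^{G/V} = \soc_{G/V}(\boX_V)$ and Fact~\ref{fact:soctriv} (for the full quotient $G/V$) to conclude $\ker(\phi_V)=0$. For an arbitrary $U$ it then picks a normal-in-$G$ subgroup $V\subseteq U$ and uses the injectivity of $i^\boX_{U,V}$ (part of the type-$H^0$ hypothesis) to deduce that $\phi_U$ is injective. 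Both arguments rest on the same socle principle and the same naturality square; your stepwise induction is more uniform and avoids the case split between normal and non-normal subgroups, while the paper's two-stage version jumps directly from $G$ to any normal $V$ without an inductive scaffold.
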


\begin{proof}
By hypothesis (i), $i^{\boY}_{G,U}\circ\phi_G\colon\boX_G\to\boY_U$ is injective for
all $U\subseteq G$.
If $V\subseteq G$ is normal in $G$, one has a commutative diagram
\begin{equation}
\label{dia:inj1}
\xymatrix{
\boX_G\ar[r]^{\phi_G}\ar[d]_{i^{\boX}_{G,V}}&\boY_G\ar[d]^{i^{\boY}_{G,V}}\\
\boX_V\ar[r]^{\phi_V}&\boY_V
}
\end{equation}
As $\boX$ is of type $H^0$, $\image(i_{G,V}^{\boX})=\boX_V^{G/V}=\soc_{G/V}(\boX_V)$
(cf. \eqref{eq:seccoh}).
Moreover, since $\phi_V\circ i^{\boX}_{G,V}=i^{\boY}_{G,V}\circ\phi_G$ is injective,
$\phi_V\vert_{\soc_{G/V}(\boX_V)}\colon\soc_{G/V}(\boX_V)\to\boY_V$ is injective, i.e., 
$\kernel(\phi_V)\cap\soc_{G/V}(\boX_V)=0$. 
Thus by Fact~\ref{fact:soctriv}, $\kernel(\phi_V)=0$ and $\phi_V$ is injective.

Let $U$ be any subgroup of $G$, and let $V\subseteq U$ be a subgroup of $U$ which is normal in $G$.
By the previously mentioned remark one has a commutative diagram
\begin{equation}
\label{dia:inj2}
\xymatrix{
\boX_U\ar[r]^{\phi_U}\ar[d]_{i^{\boX}_{U,V}}&\boY_U\ar[d]^{i^{\boY}_{U,V}}\\
\boX_V\ar[r]^{\phi_V}&\boY_V
}
\end{equation}
with $\phi_V$ is injective. By hypothesis (ii), $i_{U,V}^{\boX}$
is injective. Hence $\phi_U$ is injective, and
this yields the claim.
\end{proof}

Let $R$ be a discrete valuation domain of characteristic $0$
with prime ideal $pR$ for some prime number $p$, i.e.,
$\F=R/pR$ is a field of characteristic $p$.
For a finitely generated $R$-module $A$
let
$\gr_\bullet(A)$ denote the graded $\F[t]$-module
associated to the p-adic filtration $(p^k.A)_{k\geq 0}$.
Then every homogeneous component $\gr_k(A)$ is
a finite-dimensional $\F$-vector space. 
Moreover, $A$ is a free $R$-module if, and only if, $\gr_\bullet(A)$
is a free $\F[t]$-module.
Let $\phi\colon A\to B$ be a homomorphism of 
finitely generated $R$-modules.
Then $\phi$ induces a homomorphism of $\F[t]$-modules
$\gr_\bullet(\phi)\colon\gr_\bullet(A)\to\gr_\bullet(B)$. 
Moreover, one has the following.

\begin{fact}
\label{fact:disgr}
Let $R$ be a discrete valuation domain of characteristic $0$
with prime ideal $pR$ for some prime number $p$,
and let $\phi\colon A\to B$ be a homomorphism of $R$-lattices.
Then the following are equivalent:
\begin{itemize}
\item[(i)] $\phi$ is split-injective;
\item[(ii)] $\gr_\bullet(\alpha)\colon\gr_\bullet(A)\to\gr_\bullet(B)$ is injective;
\item[(iii)] $\gr_0(\alpha)\colon\gr_0(A)\to\gr_0(B)$ is injective.
\end{itemize}
\end{fact}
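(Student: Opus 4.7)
The plan is to show the cyclic implications (i)$\Rightarrow$(ii)$\Rightarrow$(iii)$\Rightarrow$(i), with the last being the only one requiring real work.

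For (i)$\Rightarrow$(ii): if $\phi$ is split injective, fix a splitting and write $B\simeq\phi(A)\oplus C$ as $R$-modules, where $C$ is automatically an $R$-lattice since $R$ is a DVR (hence every submodule of a free module is free). The $p$-adic filtration of $B$ decomposes compatibly, so $\gr_\bullet(B)\simeq\gr_\bullet(\phi(A))\oplus\gr_\bullet(C)$ as $\F[t]$-modules, and $\gr_\bullet(\phi)$ becomes the inclusion of the first summand; in particular it is injective. The implication (ii)$\Rightarrow$(iii) is immediate by restricting to the degree zero component.

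For the essential implication (iii)$\Rightarrow$(i), I would run a Nakayama-style argument. Choose an $R$-basis $a_1,\dots,a_n$ of $A$, and set $\bar{a}_i=a_i+pA\in\gr_0(A)$. By hypothesis the images $\gr_0(\phi)(\bar{a}_i)=\phi(a_i)+pB$ are $\F$-linearly independent in $\gr_0(B)=B/pB$. Extend them to an $\F$-basis $\phi(a_1)+pB,\dots,\phi(a_n)+pB,\bar{b}_1,\dots,\bar{b}_m$ of $B/pB$, pick arbitrary lifts $b_j\in B$ of $\bar{b}_j$, and apply Nakayama's lemma over the local ring $R$ with maximal ideal $pR$: the set $\{\phi(a_1),\dots,\phi(a_n),b_1,\dots,b_m\}$ generates $B$. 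Since $B$ is $R$-free, this set is the correct size to be a basis, and comparing ranks shows it is one. Consequently $\phi(A)$ is the $R$-span of the first $n$ elements of an $R$-basis of $B$, so $\phi(A)$ is a direct summand of $B$ and $\phi$ is split injective.

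The only subtle point is making sure that once $\{\phi(a_i),b_j\}$ generates $B$, it is automatically a basis; this is where $R$-local freeness and a cardinality count based on $\rk_R(B)=n+m$ (which one reads off from $\dim_\F B/pB$) combine cleanly, so no obstacle is expected. All other steps are routine.
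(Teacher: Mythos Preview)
Your proof is correct; the cycle (i)$\Rightarrow$(ii)$\Rightarrow$(iii)$\Rightarrow$(i) works exactly as you describe, and the Nakayama lift in (iii)$\Rightarrow$(i) is the standard argument. Note, however, that the paper states this result as a \emph{Fact} without proof, so there is no argument in the paper to compare against---you have supplied what the authors regarded as well known.
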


Lemma~\ref{lem:injmod} and Fact~\ref{fact:disgr} imply the following criterion for split-injectivity.

\begin{prop}
\label{prop:inj}
Let $G$ be a finite $p$ group, let $R$ be a discrete valuation domain of characteristic $0$
with prime ideal $pR$, and let
$\phi\colon \boX\to\boY\in \mor(\MC_G(\Rlat))$ be a natural transformation
of cohomological $R$-lattice functors with the following properties:
\begin{itemize}
\item[(i)] $\gr_0(\phi_G)\colon \gr_0(\boX_G)\to\gr_0(\boY_G)$
is injective;
\item[(ii)] $\gr_0(\boX)$ is of type $H^0$ and $\gr_0(\boY)$ is $i$-injective.
\end{itemize}
Then $\phi\colon\boX\to\boY$ is split-injective.
\end{prop}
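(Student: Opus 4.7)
The plan is to reduce the statement to the modular injectivity criterion Lemma~\ref{lem:injmod} by passing to the associated graded, then promote pointwise split-injectivity to split-injectivity of the natural transformation via Fact~\ref{fact:disgr}.

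First I would define $\gr_0(\boX) \in \ob(\MC_G(\FMod))$ by $\gr_0(\boX)_U = \boX_U/p\boX_U$, equipped with the maps induced by the $i$, $t$, $c$ structure maps of $\boX$ (and similarly for $\boY$); since reduction mod $p$ is a functor, these inherit the Mackey axioms. The natural transformation $\phi$ then induces $\gr_0(\phi)\colon \gr_0(\boX) \to \gr_0(\boY)$ in $\MC_G(\FMod)$, and by hypothesis (i) the component $\gr_0(\phi)_G = \gr_0(\phi_G)$ is injective, while hypothesis (ii) tells us $\gr_0(\boX)$ is of type $H^0$ and $\gr_0(\boY)$ is $i$-injective.

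Next I would apply Lemma~\ref{lem:injmod}, whose hypotheses are exactly what we have just verified, to conclude that $\gr_0(\phi)\colon \gr_0(\boX) \to \gr_0(\boY)$ is injective as a natural transformation of $\FMod$-valued cohomological $G$-Mackey functors. In particular, for every subgroup $U \subseteq G$ the induced map $\gr_0(\phi_U)\colon \gr_0(\boX_U) \to \gr_0(\boY_U)$ is injective.

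Finally, for each $U$ I would invoke Fact~\ref{fact:disgr} to deduce that $\phi_U\colon \boX_U \to \boY_U$ is split-injective as a map of $R$-lattices; this gives in particular that $\phi_U$ is injective and that $\coker(\phi_U)$ is a direct summand of $\boY_U$, hence itself an $R$-lattice. By the definition in \S\ref{ss:gor}, this is precisely what it means for $\phi$ to be split-injective as a natural transformation of $R$-lattice functors. The argument is essentially a packaging one, with no serious obstacle; the only point that deserves a moment's care is verifying that reduction mod $p$ really does produce a cohomological $G$-Mackey functor with values in $\FMod$ (so that Lemma~\ref{lem:injmod} applies), but this is formal since $\MC_G(\argu)$ is functorial in the coefficient ring.
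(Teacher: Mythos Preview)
Your proposal is correct and follows exactly the route the paper indicates: the paper does not give a detailed proof but simply prefaces the proposition with ``Lemma~\ref{lem:injmod} and Fact~\ref{fact:disgr} imply the following criterion for split-injectivity,'' and your argument is precisely the expected unpacking of that sentence. The only content you add beyond the paper is the explicit verification that the definition of split-injective from \S\ref{ss:gor} is met, which is appropriate.
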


%%%%%%%%%%%%%%%%%%%%
%%%%% gentle, last changes 6/9/2012  
%%%%%%%%%%%%%%%%%%%%

\section{Gentle $R^\circledast$-order categories}
\label{ss:gent}
Throughout this section we fix a prime number $p$
and assume further that $R$ is a principal ideal domain
of characteristic $0$ such that $pR$ is a prime ideal, i.e.,
$\F=R/pR$ is a field, the {\it residue field} of $R$ at $pR$.
By $K=\quot(R)$ we denote the quotient field of $R$.

%%%%
\subsection{Gentle $R^\circledast$-order categories}
\label{ss:gentordcat}
By $\Gent_R(n,p)$, $n\geq 0$, we denote the $R$-order category with
objects $\ob(\Gent_R(n,p))=\{0,\ldots,n\}$ and morphisms given by
\begin{equation}
\label{eq:morgent}
\Hom_{\Gent_R(n,p)}(j,k)=
\begin{cases}
R.t_{j,k}&\ \text{for $j<k$,}\\
R.\iid_k&\ \text{for $j=k$,}\\
R.i_{j,k}&\ \text{for $j>k$,}
\end{cases}
\end{equation}
for $0\leq j,k\leq n$ subject to the relations
\begin{itemize}
\item[(i)] $i_{l,j}=i_{k,j}\circ i_{l,k} $ for $j\leq k\leq l$;
\item[(ii)] $t_{j,l}=t_{k,l}\circ t_{j,k} $ for $j\leq k\leq l$;
\item[(iii)] $i_{j+1,j}\circ t_{j,j+1}=p.\iid_j$ for $j\in \{0,\ldots,n-1\}$;
\item[(iv)] $ t_{k-1,k}\circ i_{k,k-1}=p.\iid_k$ for $k\in \{1,\ldots,n\}$;
\end{itemize}
where we put $t_{k,k}=i_{k,k}=\iid_k$ for $k\in\{0,\ldots,n\}$.
It comes equipped with the natural equivalence 
$\sigma\colon\Gent_R(n,p)\to\Gent_R(n,p)^{\op}$
of order $2$, i.e., $\sigma\circ\sigma=\iid_{\Gent_R(n,p)}$, given by
\begin{equation}
\label{eq:antiG}
\sigma(k)=k,\ \ \sigma(t_{j,k})=i_{k,j},\ \ \sigma(i_{k,j})=t_{j,k},\ \ 0\leq j\leq k\leq n;
\end{equation}
and thus forms an $R^\circledast$-order category. 

\begin{rem}
\label{rem:gentle}
Let $\mu=\mu_{\Gent_R(n,p)}$ be the $R$-order representing $\Gent_R(n,p)$
(cf. Remark~\ref{rem:rep}). Then $\mu\otimes_R\F$ is a gentle $\F$-algebra.
It is well known that these algebras are $1$-Gorenstein (cf. \cite{gere:gent}). However,
for $n\geq 1$ they are not of finite global dimension, and, therefore, they do not have the
Whitehead property (cf. Fact~\ref{fact:glwh} and \ref{fact:gldim1}).
\end{rem}

%%%%
\subsection{The unitary projection}
\label{ss:unipro1}
Let $C_{p^n}$ be the cyclic group of order $p^n$. Then
\begin{equation}
\label{eq:unitar1}
\pi\colon\mf_R(C_{p^n})\longrightarrow\Gent_R(n,p),
\end{equation}
given by 
$\pi(U)=\log_p(|G:U|)$, $\pi(\eui_{V,U})=i_{j,i}$,
$\pi(\eut_{U,V})=t_{i,j}$, $\rho^U_g=\iid_i$,
for $U,V\subseteq G$, $|U|=p^{n-i}$, $|V|=p^{n-j}$, $j\geq i$,
is a unitary projection. Applying $\ifl^\pi(\argu)$ shows that every
functor $\boF\in\ob(\euF_R(\Gent_R(n,p)))$ can also be considered
as a cohomological Mackey functor for the finite group $C_{p^n}$.
The deflation functor $\dfl^\pi(\argu)$ can be described explicitly
using the functor of $C_{p^n}$-coinvariants $\argu_{C_{p^n}}$, i.e., 
for $\boX\in\ob(\MC_{C_{p^n}}(\RMod))$ one has
\begin{equation}
\label{eq:unitar2}
\dfl^\pi(\boX)(k)=(\boX_U)_{C_{p^n}},\ \ \text{$|U|=p^{n-k}$,}
\end{equation}
and $\dfl^\pi(\alpha)(k)=(\alpha_U)_{C_{p^n}}\colon(\boX_U)_{C_{p^n}}\to(\boY_U)_{C_{p^n}}$
for $\alpha\in\Hom_{\mf_R(C_{p^n})}(\boX,\boY)$.
Furthermore, by Fact~\ref{fact:dfl}(c), one has for $W\subseteq G$, $|W|=p^{n-k}$, that
\begin{equation}
\label{eq:dflps}
\dfl^\pi(\boP^W)\simeq \boP^k.
\end{equation}

%%%%%%%%%%
\subsection{Simple functors}
\label{ss:simpgent}
As every functor $\boF\in\ob(\euF_R(\Gent_R(n,p)^{\op},\RMod))$
is in particular a cohomological $C_{p^n}$-Mackey functor,
one can use the description given in \cite{pj:simp}
in order to determine all simple functors in 
$\ob(\euF_R(\Gent_R(n,p)^{\op},\RMod^{\fg}))$. For every $\ell\in\{0,\ldots,n\}$
there exists a simple functor $\boS^\ell$ given by
\begin{equation}
\label{eq:simp}
\boS^\ell(k)=\begin{cases}
\F,&\ \text{for $k=\ell$,}\\
0,&\ \text{for $k\not=\ell$;}
\end{cases}
\ \ \boS^\ell(t_{j,k})=0,\ \ \boS^\ell(i_{k,j})=0,\ \ 
0\leq j\leq k\leq n.
\end{equation}
From Remark~\ref{rem:minproj}
one concludes that if $R$ is discrete valuation ring of
characteristic $0$ with maximal ideal $pR$, then
every simple functor $\boS\in\ob(\euF_R(\Gent_R(n,p)^{\op},\RMod^{\fg}))$
must be naturally isomorphic to some $\boS^\ell$, $0\leq \ell\leq n$.

%%%%%
\subsection{$R$-lattice functors of rank $1$}
\label{ss:rank1}
Let $\boF\in\ob(\euF_R(\Gent_R(n,p)^{\op},\Rlat))$ be an $R$-lattice functor.
Then $\boF(i_{0,k})\otimes_R K\colon \boF(k)\otimes_RK\to\boF(0)\otimes_RK$ is an 
isomorphism of finite-dimensional $K$-vector spaces,
i.e., $\rk(\boF(k))=\rk(\boF(0))$ for all $k\in\{0,\ldots,n\}$, where 
$\rk(\boF(0))$ denotes the rank of the free $R$-module $\boF(0)$.
We define the
{\it rank of $\boF$} by $\rk(\boF)=rk(\boF(0))$.

If $M$ is an $R$-lattice and $B\subseteq M$ is an $R$-submodule of $M$,
we denote by 
\begin{equation}
\label{eq:defsat}
\sat_M(B)=\{\,b\in M\mid\exists r\in R\setminus\{0\}\colon\ r.b\in B\,\}
\end{equation}
the {\it saturation} of $B$ in $M$. It is again an $R$-submodule of $M$.
Let $\boG$ be a subfunctor of the $R$-lattice functor $\boF\in\ob(\euF_R(\Gent_R(n,p)^{\op},\Rlat))$.
Then $\sat_{\boF}(\boG)$ given by
\begin{equation}
\label{eq:defsat2}
\sat_{\boF}(\boG)(k)=\sat_{\boF(k)}(\boG(k)),
\end{equation}
$0\leq k\leq n$, is a subfunctor of $\boF$ containing $\boG$.
The subfunctor $\boG$ will be called {\it saturated}, if $\sat_\boF(\boG)=\boG$.
The following fact allows us to reduce some considerations to
$R$-lattice functors of rank $1$.

\begin{fact}
\label{fact:rank11}
Let $\boF\in\ob(\euF_R(\Gent_R(n,p)^{\op},\Rlat))$, $\rk(\boF)> 0$. 
Then $\boF$ contains a
saturated subfunctor of rank $1$.
In particular, there exists an ascending chain $(\boF_j)_{0\leq j\leq \rk(\boF)}$
of subfunctors of $\boF$ satisfying
$\boF_0=0$, $\boF_{j-1}\subseteq\boF_j$, $\boF_{\rk(\boF)}=\boF$ and $\boF_j/\boF_{j-1}$ is an $R$-lattice functor of rank~$1$.
\end{fact}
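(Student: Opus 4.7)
The plan is to reduce the problem to linear algebra over the quotient field $K=\quot(R)$, exploiting the fact that the defining relations of $\Gent_R(n,p)$ force every structure map of $\boF\otimes_RK$ to become an isomorphism of finite-dimensional $K$-vector spaces. Iterating relations (iii) and (iv), one obtains
\begin{equation*}
i_{k,j}\circ t_{j,k}=p^{k-j}\cdot\iid_j,\qquad
t_{j,k}\circ i_{k,j}=p^{k-j}\cdot\iid_k,
\end{equation*}
for all $0\leq j\leq k\leq n$; since $p$ is a unit of $K$, this makes every $\boF(t_{j,k})\otimes K$ and $\boF(i_{k,j})\otimes K$ an isomorphism. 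In particular $\rk(\boF(k))=\rk(\boF(0))>0$ for all $k$, so one can hope to carve out a rank-$1$ saturated subfunctor by transporting a one-dimensional $K$-subspace of $V:=\boF(0)\otimes_RK$ across all vertices.

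To make this precise, set $\psi_k:=\boF(t_{0,k})\otimes_RK\colon\boF(k)\otimes_RK\to V$, with the convention $\psi_0=\iid_V$; each $\psi_k$ is then a $K$-linear isomorphism. Using $t_{0,k}=t_{j,k}\circ t_{0,j}$ (for $j<k$) one obtains $\psi_j\circ(\boF(t_{j,k})\otimes K)=\psi_k$, and using $t_{0,j}=t_{k,j}\circ t_{0,k}$ together with $i_{j,k}\circ t_{k,j}=p^{j-k}\iid_k$ (for $j>k$) one obtains $\psi_j\circ(\boF(i_{j,k})\otimes K)=p^{j-k}\psi_k$. Now pick any one-dimensional $K$-subspace $W_0\subseteq V$ and define
\begin{equation*}
\boG(k):=\psi_k^{-1}(W_0)\cap\boF(k),\qquad 0\leq k\leq n.
\end{equation*}
Since $W_0$ is a $K$-subspace we have $p^{j-k}W_0=W_0$, and the two compatibility identities above show that both $\boF(t_{j,k})$ (for $j<k$) and $\boF(i_{j,k})$ (for $j>k$) carry $\boG(k)$ into $\boG(j)$; hence $\boG$ is a subfunctor of $\boF$.

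It remains to verify that $\boG$ is a saturated rank-$1$ $R$-lattice subfunctor and then to iterate. Saturation is immediate from the definition together with the fact that $W_0$ is $K$-closed. A clearing-denominators argument then shows $\boG(k)\otimes_RK=\psi_k^{-1}(W_0)$, which is one-dimensional over $K$; and since $R$ is a principal ideal domain and $\boF(k)$ is a finitely generated free $R$-module, the saturated submodule $\boG(k)$ is a direct summand of $\boF(k)$, hence itself free of rank $1$. For the ascending chain one induces on $m=\rk(\boF)$: given such a saturated rank-$1$ subfunctor $\boF_1\subseteq\boF$, the quotient $\boF/\boF_1$ is again an $R$-lattice functor (each $\boF(k)/\boF_1(k)$ is finitely generated and torsion-free, hence free) of rank $m-1$, and the induction hypothesis applied to $\boF/\boF_1$, pulled back along the projection $\boF\to\boF/\boF_1$, yields the desired filtration. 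The only non-formal input is the compatibility computation between $\psi_k$ and the structure maps of $\boF$ in the second step; everything else is standard linear algebra over a PID.
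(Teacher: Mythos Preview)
Your proof is correct and follows essentially the same idea as the paper's. The paper's argument is a one-liner: pick $a\in\boF(0)\setminus\{0\}$ and set $\boR(k)=\sat_{\boF(k)}(R\cdot\boF(i_{k,0})(a))$, then induct. Your construction is the same object viewed through the quotient field: taking $W_0=K\cdot a$, one has $\sat_{\boF(k)}(R\cdot\boF(i_{k,0})(a))=(K\cdot\boF(i_{k,0})(a))\cap\boF(k)=\psi_k^{-1}(W_0)\cap\boF(k)$, since $\psi_k(\boF(i_{k,0})(a))=p^k a$ over $K$. The only difference is that you spell out the subfunctor verification and the rank computation explicitly, whereas the paper leaves these as implicit.
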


\begin{proof}
Let $a\in\boF(0)$, $a\not=0$. Then $\boR\subseteq\boF$ 
given by $\boR(k)=\sat_{\boF(k)}(R\boF(i_{k,0})(a))$ together with the canonical maps 
is a saturated {subfunctor} of $\boF$. The final remark follows by induction.
\end{proof}

Let $\boF$ be an $R$-lattice functor of rank $1$.
By (iii) and (iv) of the definition, for $k\in\{0,\ldots n-1\}$ 
either $\boF(t_{k,k+1})$ is an isomorphism,
or $\boF(i_{k+1,k})$ is an isomorphism.
Thus we can represent $\boF$ by a diagram $\Delta_{\boF}$,
where we draw an arrow from
$k+1$ to $k$ if $\boF(t_{k,k+1})\colon\boF(k+1)\to\boF(k)$ is an isomorphism,
and an arrow from $k$ to $k+1$ if $\boF(i_{k+1,k})\colon\boF(k)\to\boF(k+1)$ is an
isomorphism. It is straightforward to verify that the isomorphism type of $\boF$
is uniquely determined by $\Delta_{\boF}$, and that for every arrow diagram
$\Delta$ there exists an $R$-lattice functor $\boF_\Delta$ which is represented
by this diagram.

\begin{rem}
\label{rem:diafun}
(a) For $\ell\in\{0,\ldots,n\}$ let $\boP^\ell=\Hom_{\Gent}(\argu,\ell)$ 
be the standard projective $R$-lattice functor associated
to $\ell$ (cf. \S\ref{ss:proj}). Then $\boP^\ell$ has rank $1$ and is represented by the arrow diagram
\begin{equation}
\label{eq:arrP}
\xymatrix@C.5truecm{
0&1\ar[l]&\cdots\ar[l]&\ell-1\ar[l]&\ell\ar[r]\ar[l]&\ell+1\ar[r]&\cdots\ar[r]&n-1\ar[r]&n
}.
\end{equation}

\noindent
(b) If $\boF$ is represented by the diagram $\Delta_{\boF}$, then $\boF^\ast$
is represented by the diagram $\Delta_{\boF^\ast}=\bar{\Delta}_{\boF}$, where all
arrows are reversed.

\noindent
(c) Let $\boJ^\ell=(\boP^\ell)^\ast$, $\ell\in\{0,\ldots,n\}$. Then $\boJ^\ell$ is relative
injective and, by (a) and (b), $\boJ^\ell$ is represented by the diagram
\begin{equation}
\label{eq:arrJ}
\xymatrix@C.5truecm{
0\ar[r]&1\ar[r]&\cdots\ar[r]&\ell-1\ar[r]&\ell&\ell+1\ar[l]&\cdots\ar[l]&n-1\ar[l]&n\ar[l]
}.
\end{equation}
In particular, $\boP^0\simeq\boJ^n$ and $\boP^n\simeq\boJ^0$ are relative injective.
\end{rem}

Let $\boF\in\ob(\euF_R(\Gent_R(n,p)^{\op},\Rlat))$ be a an $R$-lattice functor of rank $1$.
Then $\Delta_{\boF}$ defines a connected graph $\Gamma_{\boF}$ in the plane 
$\RR^2=\RR e_1\oplus\RR e_2$, where all
arrows are diagonal and point in negative $e_2$-direction, e.g., for $\boF\in
\ob(\euF_R(\Gent_R(8,p)^{\op},\Rlat))$ with $\Delta_\boF$ given by
\begin{equation}
\label{eq:arrex1}
\xymatrix@C.5truecm{
0&1\ar[l]\ar[r]&2\ar[r]&3&4\ar[r]\ar[l]&5\ar[r]&6\ar[r]&7&8\ar[l]
}
\end{equation}
one obtains the graph $\Gamma_\boF$
\begin{equation}
\label{eq:arrex2}
\xymatrix@C1.1truecm @M=0pt @W=0pt @R=.5cm{
\ar@{-}[r]\ar@{-}[d]&\ar@{-}[r]&\ar@{-}[r]&\ar@{-}[r]&\ar@{-}[r]&\ar@{-}[r]&\ar@{-}[r]&\ar@{-}[r]&\ar@{-}[d]\\
\ar@{-}[d]&\ar[dl]\ar[dr]\miniblacksquare&&&&&&&\ar@{-}[d]\\
\bullet\ar@{-}[d]&&\bullet\ar[dr]&&\miniblacksquare\ar[dl]\ar[dr]&&&&\ar@{-}[d]\\
\ar@{-}[d]&&&\circ\ar@{-->}[dr]\ar@{..>}[dl]&&\bullet\ar[dr]&&&\ar@{-}[d]\\
\ar@{-}[d]&&\ar@{..>}[dl]&&\ar@{-->}[dr]&&\bullet\ar[dr]&&\miniblacksquare\ar[dl]\ar@{-}[d]\\
\ar@{-}[d]&\ar@{..>}[dl]&&&&\ar@{-->}[dr]&&\circ&\ar@{-}[d]\\
\ar@{-}[d]&&&&&&\ar@{-->}[dr]&&\ar@{-}[d]\\
\ar@{-}[d]&&&&&&&\ar@{-->}[dr]&\ar@{-}[d]\\
&&&&&&&&\ar@{-}[d]\\
\ar@{->}[u]^{e_2}\ar@{->}[r]_{e_1}&\ar@{-}[r]&\ar@{-}[r]&\ar@{-}[r]&\ar@{-}[r]&\ar@{-}[r]&\ar@{-}[r]&\ar@{-}[r]&\\
0&1&2&3&4&5&6&7&8
}.
\end{equation}
Let $\max(\boF)\subset\ob(\Gent_R(n,p))$ be the set of objects corresponding to local maxima in
the graph $\Gamma_{\boF}$, i.e., $k\not\in\{0,n\}$ is contained in 
$\max(\boF)$ if, and only if, $\Delta_\boF$ contains a subdiagram of
the form $(\xymatrix{k-1&k\ar[r]\ar[l]&k+1})$.
Moreover, $0\in\max(\boF)$ if $(\xymatrix{0\ar[r]&1})$ is a subdiagram of $\Delta_{\boF}$,
while $n\in \max(\boF)$ if $(\xymatrix{n-1&\ar[l]n})$ is a subdiagram of $\Delta_{\boF}$.
E.g., for $\boF\in\ob(\euF_R(\Gent_R(8,p)^{\op},\Rlat))$ as in \eqref{eq:arrex1} one has that
$\max(\boF)=\{1,4,8\}$.
By $\min(\boF)$ we denote the subset of $\{1,\ldots,n-1\}$ corresponding to local minima
in the graph $\Delta_{\boF}$, i.e., $\ell\not\in\{0,n\}$ is contained in 
$\min(\boF)$ if, and only if, $\Delta_\boF$ contains a subdiagram of
the form $(\xymatrix{k-1\ar[r]&k&k+1\ar[l]})$.
E.g., for the functor $\boF\in\ob(\euF_R(\Gent_R(8,p)^{\op},\Rlat))$ as in \eqref{eq:arrex1} one has
$\min(\boF)=\{3, 7\}$. Thus by construction, one has
$|\max(\boF)|=|\min(\boF)|+1$.
The following fact is straightforward.

\begin{fact}
\label{fact:hdrk1}
Let $\boF\in\ob(\euF_R(\Gent_R(n,p)^{\op},\Rlat))$ be an $R$-lattice functor of rank $1$.
Then $\nat_R(\boF,\boS^\ell)\simeq \F$ if $\ell\in\max(\boF)$,
and $\nat_R(\boF,\boS^\ell)=0$ if $\ell\not\in\max(\boF)$.
Moreover, $\boF$ is projective if, and only if, $\min(\boF)=\emptyset$.
\end{fact}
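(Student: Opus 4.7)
The plan is as follows. Since $\boF$ has rank $1$, the underlying $R$-module $\boF(k)$ is isomorphic to $R$ for every $k\in\{0,\ldots,n\}$, and by relations (iii) and (iv) of \S\ref{ss:gentordcat}, at each pair of consecutive indices $(k,k+1)$ exactly one of the transition maps $\boF(t_{k,k+1})$ or $\boF(i_{k+1,k})$ is an isomorphism while the other is multiplication by $p$ (under the identification induced by the isomorphism). The arrow diagram $\Delta_{\boF}$ records which direction is the isomorphism. A natural transformation $\phi\colon\boF\to\boS^{\ell}$ is determined by a single $R$-linear map $\phi_{\ell}\colon R\to\F$, because $\boS^{\ell}(k)=0$ for $k\neq\ell$.

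The only naturality conditions that are not automatic are those involving a morphism $f\colon\ell\to k$ in $\Gent_R(n,p)$ with $k\neq\ell$, and these reduce to the condition $\phi_{\ell}\circ\boF(f)=0$. Using the composition relations, it suffices to check $f=t_{\ell,\ell+1}$ and $f=i_{\ell,\ell-1}$. Consider $f=t_{\ell,\ell+1}$: if the arrow at $\ell$ points outward (towards $\ell+1$) then $\boF(i_{\ell+1,\ell})$ is an isomorphism and hence $\boF(t_{\ell,\ell+1})$ has image $pR\subseteq\boF(\ell)\cong R$, so $\phi_{\ell}(pR)=0$ holds automatically because $\phi_{\ell}$ lands in $\F=R/pR$; if the arrow points inward, $\boF(t_{\ell,\ell+1})$ is an isomorphism and the condition forces $\phi_{\ell}=0$. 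The analogous dichotomy at $\ell-1$ uses $i_{\ell,\ell-1}$; the boundary cases $\ell\in\{0,n\}$ have only one neighbour, matching the endpoint conventions in the definition of $\max(\boF)$. Thus $\phi_{\ell}$ is free (ranging over $\Hom_R(R,\F)\cong\F$) precisely when both arrows at $\ell$ point outward, i.e.\ $\ell\in\max(\boF)$, and $\phi_{\ell}=0$ is forced otherwise.

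For the projectivity statement, the elementary combinatorial identity $|\max(\boF)|=|\min(\boF)|+1$ (an immediate consequence of $\Delta_{\boF}$ being a connected zig-zag on $\{0,\ldots,n\}$ with the stated endpoint convention) shows that $\min(\boF)=\emptyset$ forces $|\max(\boF)|=1$. A unique local maximum $\ell$ means that $\Delta_{\boF}$ coincides with $\Delta_{\boP^\ell}$ of Remark~\ref{rem:diafun}(a), so $\boF\simeq\boP^\ell$ is projective. Conversely, if $\boF$ is projective of rank $1$, then its head (as a module over the representing $R$-order) is $\bigoplus_{\ell\in\max(\boF)}\boS^\ell$ by the first part of the fact. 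The projective cover of $\boS^\ell$ is $\boP^\ell$ (by an argument parallel to Remark~\ref{rem:minproj}, with the $p$-adic formalism replaced by its residue-field counterpart), so $\boF\simeq\bigoplus_{\ell\in\max(\boF)}\boP^\ell$; comparing ranks (each $\boP^\ell$ has rank $1$) forces $|\max(\boF)|=1$ and hence $\min(\boF)=\emptyset$.

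The main obstacle is not any deep technical point but rather the careful bookkeeping of the four direction cases at each internal vertex $\ell$, together with the two boundary cases $\ell\in\{0,n\}$, so that the conclusion matches the explicit definition of $\max(\boF)$. Normalizing the ``multiplication by $p$'' description of the non-isomorphism transition maps consistently across the diagram also requires attention, but this is straightforward once $\boF(k)$ has been identified with $R$ via a chosen isomorphism propagated along the arrows.
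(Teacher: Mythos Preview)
Your argument is correct. The paper states this fact as ``straightforward'' and gives no proof, so your write-up supplies exactly the omitted details. Your computation of $\nat_R(\boF,\boS^\ell)$ via the naturality constraints on $\phi_\ell$ is the natural argument, and your treatment of the forward direction of the projectivity claim (that $\min(\boF)=\emptyset$ forces $\Delta_\boF=\Delta_{\boP^\ell}$ for some $\ell$) is clean.

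One small remark on the converse: your head/projective-cover argument is valid but implicitly uses that the representing $R$-order is semiperfect, which holds when $R$ is a discrete valuation domain; the section's standing hypothesis is only that $R$ is a principal ideal domain with $pR$ prime. The paper itself only develops the radical/head formalism under the DVR assumption in \S\ref{ss:projgent}, so this is not really your gap. If you want an argument that works verbatim over any such PID, you can observe that a rank-$1$ $R$-lattice functor is indecomposable (a direct-sum decomposition would force one summand to vanish at $0$, hence everywhere after tensoring with $K$), and then note that any indecomposable projective is isomorphic to some $\boP^\ell$: indeed, any nonzero map $\boP^\ell\to\boF$ with $\ell\in\max(\boF)$ induces an isomorphism after $\otimes_R K$ and hence is injective, giving a split monomorphism $\boP^\ell\hookrightarrow\boF$ when $\boF$ is projective; indecomposability then yields $\boF\simeq\boP^\ell$.
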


Let $\boF\in \ob(\euF_R(\Gent_R(n,p)^{\op},\Rlat))$  be an $R$-lattice functor of rank $1$
which is not projective. Let $s(\boF)\in\max(\boF)$ be the smallest
element in $\max(\boF)$, and let $t(\boF)$ be the smallest element in $\min(\boF)$.
The projective $R$-lattice functor $\boP^{s(\boF)}$ corresponds to the diagram
obtained from the diagram $\Delta_{\boF}$ by changing all arrows between vertices 
$\alpha$ and  $\alpha+1$, $\alpha \geq t(\boF)$ to $\xymatrix{(\alpha\ar[r]&\alpha+1)}$.
Let $\boF^\wedge \in\ob(\euF_R(\Gent_R(n)^{\op},\Rlat))$  be the $R$-lattice functor of rank $1$
corresponding to the diagram
obtained from the diagram $\Delta_{\boF}$ by changing all arrows between vertices 
$\alpha-1$ and  $\alpha$, $\alpha \leq t(\boF)$ to $\xymatrix{(\alpha-1&\alpha\ar[l])}$.
E.g., for $\boF\in \ob(\euF_R(\Gent_R(8,p)^{\op},\Rlat))$ as in \eqref{eq:arrex1},
$\Gamma_{\boF^\wedge}$ is given by replacing the first segment by the path 
$\xymatrix{\ar@{..>}[r]&}$ in \eqref{eq:arrex2};
and $\Delta_{\boF^\wedge}$ is given by
\begin{equation}
\label{eq:arrex3}
\xymatrix@C.5truecm{
0&1\ar[l]&2\ar[l]&3\ar[l]&4\ar[r]\ar[l]&5\ar[r]&6\ar[r]&7&8\ar[l]
}
\end{equation}

%%%%%%%%

\subsection{The global dimension of $\Gent_R(n,p)$}
\label{ss:globgent}
The following property will be essential for the subsequent analysis.

\begin{lem}
\label{lem:rank1seq}
Let $\boF\in \ob(\euF_R(\Gent_R(n,p)^{\op},\Rlat))$  be an $R$-lattice functor of rank~$1$
which is not projective. Then one has a short exact sequence of $R$-lattice functors
\begin{equation}
\label{eq:rank1seq}
\xymatrix{
0\ar[r]&\boP^{t(\boF)}\ar[r]^-{\psi}&\boP^{s(\boF)}\oplus\boF^{\wedge}\ar[r]^-{\phi}&\boF\ar[r]&0.
}
\end{equation}
\end{lem}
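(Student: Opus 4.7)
I would prove the lemma by constructing the morphisms $\phi\colon\boP^{s(\boF)}\oplus\boF^\wedge\to\boF$ and $\psi\colon\boP^{t(\boF)}\to\boP^{s(\boF)}\oplus\boF^\wedge$ explicitly and then verifying exactness of \eqref{eq:rank1seq} vertex by vertex. The first component $\phi_1\colon\boP^{s(\boF)}\to\boF$ is defined via Fact~\ref{fact:Pfirst}: it corresponds under the isomorphism $\nat_R(\boP^{s(\boF)},\boF)\cong\boF(s(\boF))$ to a chosen generator $f$ of the rank-$1$ free $R$-module $\boF(s(\boF))$. The second component $\phi_2\colon\boF^\wedge\to\boF$ is constructed by observing that $\Delta_{\boF^\wedge}$ and $\Delta_\boF$ coincide on every arrow between vertices $\alpha-1$ and $\alpha$ with $\alpha>t(\boF)$, so that these functors are canonically isomorphic when restricted to the full subcategory on $\{t(\boF),\ldots,n\}$; one fixes this isomorphism by choosing compatible generators $w_k\in\boF^\wedge(k)$ and $f_k\in\boF(k)$ for $k\geq t(\boF)$ and extends $\phi_2$ uniquely to $\{0,\ldots,n\}$ by naturality, checking via the relations (iii) and (iv) in \S\ref{ss:gentordcat} that each $(\phi_2)_k$ acts, in these coordinates, as multiplication by a nonnegative power of $p$ and is therefore a well-defined element of $R$.

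For the surjectivity of $\phi=\phi_1\oplus\phi_2$ it suffices to check that at every vertex $k$ at least one of the scalars $(\phi_1)_k$, $(\phi_2)_k$ is a unit of $R$. By construction $(\phi_2)_k\in R^\times$ whenever $k\geq t(\boF)$. On the other hand, since $\boF$ is not projective one has $s(\boF)<t(\boF)$ with no extrema of $\Delta_\boF$ strictly between them, and so $\Delta_{\boP^{s(\boF)}}$ and $\Delta_\boF$ agree on every arrow $\alpha-1,\alpha$ with $\alpha\leq t(\boF)$: both are leftward for $\alpha\leq s(\boF)$ and rightward for $s(\boF)<\alpha\leq t(\boF)$. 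Hence $(\phi_1)_k\in R^\times$ for every $k\leq t(\boF)$, and the two ranges cover $\{0,\ldots,n\}$.

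To exhibit the kernel, specify $\psi$ via Fact~\ref{fact:Pfirst} by the element $(u,-w)\in\boP^{s(\boF)}(t(\boF))\oplus\boF^\wedge(t(\boF))$, where $u=\boP^{s(\boF)}(i_{t(\boF),s(\boF)})(\iid_{s(\boF)})$ is the natural generator of $\boP^{s(\boF)}(t(\boF))$ and $w$ is the generator of $\boF^\wedge(t(\boF))$ fixed during the construction of $\phi_2$. The normalization implies $(\phi_1)_{t(\boF)}(u)=(\phi_2)_{t(\boF)}(w)$ (both equal the generator of $\boF(t(\boF))$), so $\phi_{t(\boF)}\circ\psi_{t(\boF)}=0$; naturality then extends this identity to $\phi\circ\psi=0$, and thus $\psi$ factors through $\kernel(\phi)$. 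Both $\boP^{t(\boF)}$ and $\kernel(\phi)$ are $R$-lattice functors of rank $1$, and since $\psi$ is nonzero it is an injection of rank-$1$ lattice functors.

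The main obstacle is the final verification that $\psi$ realises an \emph{isomorphism} onto $\kernel(\phi)$, not merely an injection. At each vertex $k$ the submodule $\kernel(\phi_k)\subseteq R\oplus R$ is generated, up to units, by $(-(\phi_2)_k,(\phi_1)_k)$, and one must trace how the structure maps $t_{k-1,k}$ and $i_{k,k-1}$ on $\boP^{s(\boF)}\oplus\boF^\wedge$ act on the image of $\psi_k$ in order to show that $\Delta_{\kernel(\phi)}$ has leftward arrows for $\alpha\leq t(\boF)$ and rightward arrows for $\alpha>t(\boF)$, whence $\Delta_{\kernel(\phi)}=\Delta_{\boP^{t(\boF)}}$ and the injection is an isomorphism. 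The verification decomposes naturally according to the three intervals $[0,s(\boF)]$, $[s(\boF),t(\boF)]$, $[t(\boF),n]$ and, inside each, proceeds by case analysis on the orientation of the relevant arrow of $\Delta_\boF$; in every case, the key point is that the unit coordinate among $(\phi_1)_k,(\phi_2)_k$ forces the generator of $\image(\psi_k)$ to transport correctly, matching the rigid arrow pattern of the projective $\boP^{t(\boF)}$.
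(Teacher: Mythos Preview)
Your approach is correct and would yield a valid proof once the final case analysis is carried out, but it differs from the paper's argument in a way worth noting. You build $\phi_1$ and $\phi_2$ abstractly via Yoneda and a choice of coordinates, which then forces you into the vertex-by-vertex verification you flag as ``the main obstacle'': tracking the scalar $(\phi_i)_k$ at each $k$ and checking that the arrow pattern of $\kernel(\phi)$ matches $\Delta_{\boP^{t(\boF)}}$ across three intervals and several sub-cases.

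The paper sidesteps this entirely by realizing $\boP^{s(\boF)}$ and $\boF^\wedge$ \emph{concretely as subfunctors of $\boF$}: one sets $\boP^{s(\boF)}(k)=\boF(k)$ for $k\le t(\boF)$ and $\boP^{s(\boF)}(k)=\image\boF(i_{k,t(\boF)})$ for $k>t(\boF)$, and dually $\boF^\wedge(k)=\image\boF(t_{k,t(\boF)})$ for $k\le t(\boF)$ and $\boF^\wedge(k)=\boF(k)$ for $k>t(\boF)$. With $\phi_1,\phi_2$ taken to be the inclusions, surjectivity of $\phi$ is immediate (at each $k$ one summand already equals $\boF(k)$), and the kernel is the antidiagonal copy of the intersection $\boX=\boP^{s(\boF)}\cap\boF^\wedge\subseteq\boF$. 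This intersection is visibly a rank-$1$ $R$-lattice functor in which every structure map $\boX(t_{j,t(\boF)})$ and $\boX(i_{k,t(\boF)})$ is surjective, hence an isomorphism; so $\Delta_{\boX}=\Delta_{\boP^{t(\boF)}}$ and one is done without any case split. The trade-off is that your argument is coordinate-explicit and perhaps more transparent to a reader unfamiliar with the diagram formalism, while the paper's subfunctor trick is shorter and eliminates precisely the bookkeeping you identified as the hardest step.
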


\begin{proof} One can identify $\boP^{s(\boF)}$ and $\boF^\wedge$ as subfunctors of $\boF$ by
putting
\begin{equation}
\label{eq:subPs}
\begin{aligned}
\boP^{s(\boF)}(k)&=
\begin{cases}
\hfil\boF(k)\hfil&\ \text{for $k\leq t(\boF)$},\\
\image(\boF(i_{k,t(\boF)}))&\ \text{for $k> t(\boF)$};
\end{cases}\\
\boF^\wedge(k)&=
\begin{cases}
\image(\boF(t_{k,t(\boF)}))&\ \text{for $k\leq t(\boF)$},\\
\hfil\boF(k)\hfil&\ \text{for $k> t(\boF)$}.
\end{cases}
\end{aligned}
\end{equation}
for $k\in\{0,\ldots,n\}$.
Let $\phi_1\colon \boP^{s(\boF)}\to \boF$ and $\phi_2\colon \boF^\wedge\to\boF$
denote the canonical inclusions. By construction, 
$\phi=\phi_1\oplus\phi_2\colon\boP^{s(\boF)}\oplus\boF^\wedge\to\boF$ is surjective
with kernel $\kernel(\phi)\subseteq \boP^{s(\boF)}\oplus\boF^\wedge$ given by
\begin{equation}
\label{eq:kerphi}
\kernel(\phi)(k)=\{\,(x,-x)\in\boP^{s(\boF)}(k)\oplus\boF^\wedge(k)
\mid x\in \boP^{s(\boF)}(k)\cap\boF^\wedge(k)\,\},
\end{equation}
i.e., $\kernel(\phi)\simeq\boP^{s(\boF)}\cap\boF^\wedge$.
By construction, $\boX=\boP^{s(\boF)}\cap\boF^\wedge$ is an $R$-lattice functor
of rank~$1$ with all maps
$\boX(t_{j,t(\boF)})$ and $\boX(i_{k,t(\boF)})$ surjective
for  $0\leq j<t(\boF)<k\leq n$. Hence all maps
$\boX(t_{j,t(\boF)})$, $\boX(i_{k,t(\boF)})$, $0\leq j<t(\boF)<k\leq n$, are isomorphisms.
Thus $\Delta_{\boX}=\Delta_{\boP^{t(\boF)}}$, and this yields the claim.
\end{proof}

The equality $|\max(\boF^\wedge)|=|\min(\boF)|+1$ has the following consequence.

\begin{prop}
\label{prop:projrank1}
Let $\boF\in \ob(\euF_R(\Gent_R(n,p)^{\op},\Rlat))$  be an $R$-lattice functor of rank~$1$.
Then one has a short exact sequence of $R$-lattice functors
\begin{equation}
\label{eq:rank1seq2}
\xymatrix{
0\ar[r]&
\textstyle{\bigoplus_{j\in\min(\boF)}}\boP^{j}\ar[r]^-{\alpha}&
\textstyle{\bigoplus_{k\in\max(\boF)}}\boP^{k}\ar[r]^-{\beta}&\boF\ar[r]&0.
}
\end{equation}
In particular, $\prdim_R(\boF)\leq 1$.
\end{prop}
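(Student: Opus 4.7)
The plan is to induct on $m=|\min(\boF)|$, using Lemma~\ref{lem:rank1seq} as the inductive step engine together with the key combinatorial observation that passing from $\boF$ to $\boF^{\wedge}$ removes exactly one local maximum and one local minimum.

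For the base case $m=0$, Fact~\ref{fact:hdrk1} gives that $\boF$ is projective, and the equality $|\max(\boF)|=|\min(\boF)|+1$ forces $\max(\boF)=\{k\}$ for a single vertex $k$; the comparison of the arrow diagrams then shows $\boF\simeq\boP^{k}$, so the displayed sequence reduces to $0\to 0\to\boP^{k}\to\boF\to 0$.

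For the inductive step, I first need the combinatorial claim: if $\boF$ is not projective, then reading off the diagrams of $\boF$ and $\boF^{\wedge}$ (using the explicit description in \S\ref{ss:rank1} and the fact that in $\Delta_{\boF^{\wedge}}$ every arrow between $\alpha-1$ and $\alpha$ with $\alpha\leq t(\boF)$ points from $\alpha$ to $\alpha-1$) shows
\begin{equation*}
\max(\boF^{\wedge})=\max(\boF)\setminus\{s(\boF)\},\qquad
\min(\boF^{\wedge})=\min(\boF)\setminus\{t(\boF)\}.
\end{equation*}
Indeed $s(\boF)<t(\boF)$ since $0$ descends to the first local minimum only after reaching a local maximum, so $s(\boF)\in\max(\boF)$ disappears while every higher local extremum of $\boF$ is preserved, and $t(\boF)$ ceases to be a local minimum because the arrows on both sides of it now point toward $0$.

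With this in hand, I apply the inductive hypothesis to $\boF^{\wedge}$ (which has strictly fewer local minima), giving a short exact sequence
\begin{equation*}
0\to\textstyle\bigoplus_{j\in\min(\boF^{\wedge})}\boP^{j}\xrightarrow{\alpha'}\textstyle\bigoplus_{k\in\max(\boF^{\wedge})}\boP^{k}\xrightarrow{\beta'}\boF^{\wedge}\to 0.
\end{equation*}
Adding $\boP^{s(\boF)}$ as a direct summand on the right two terms and composing with the surjection $\boP^{s(\boF)}\oplus\boF^{\wedge}\twoheadrightarrow\boF$ from Lemma~\ref{lem:rank1seq} produces a surjection $\beta\colon\bigoplus_{k\in\max(\boF)}\boP^{k}\twoheadrightarrow\boF$, where I use $\max(\boF)=\{s(\boF)\}\sqcup\max(\boF^{\wedge})$. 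A short diagram chase (or a direct application of the snake/horseshoe lemma to the two given resolutions) identifies $\kernel(\beta)$ as an extension
\begin{equation*}
0\to\textstyle\bigoplus_{j\in\min(\boF^{\wedge})}\boP^{j}\longrightarrow\kernel(\beta)\longrightarrow\boP^{t(\boF)}\to 0.
\end{equation*}
Since $\boP^{t(\boF)}$ is projective this sequence splits, yielding $\kernel(\beta)\simeq\bigoplus_{j\in\min(\boF)}\boP^{j}$ via $\min(\boF)=\{t(\boF)\}\sqcup\min(\boF^{\wedge})$. This produces the claimed short exact sequence, and since both outer terms are projective $R$-lattice functors we obtain $\prdim_{R}(\boF)\leq 1$.

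The main obstacle is a bookkeeping one: verifying carefully that the combinatorial identities for $\max$ and $\min$ hold (including the boundary cases $t(\boF)=n$ and $s(\boF)=0$), and that the horseshoe-type patching indeed produces a direct sum decomposition of the kernel rather than a non-split extension; the latter is resolved cleanly by the projectivity of $\boP^{t(\boF)}$.
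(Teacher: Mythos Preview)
Your proof is correct and follows essentially the same route as the paper: both induct on the number of local extrema (you on $|\min(\boF)|$, the paper equivalently on $|\max(\boF)|=|\min(\boF)|+1$), invoke Lemma~\ref{lem:rank1seq} for the inductive step, and identify $\kernel(\beta)$ via the snake lemma applied to the obvious $3\times 3$ diagram, splitting the resulting extension by projectivity of $\boP^{t(\boF)}$. One trivial remark: the boundary case $t(\boF)=n$ you flag cannot occur, since by definition $\min(\boF)\subseteq\{1,\ldots,n-1\}$.
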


\begin{proof}
We proceed by induction on $m=|\max(\boF)|$.
If $|\max(\boF)|=1$, one has $\min(\boF)=\emptyset$, and hence $\boF$ is projective.
Therefore we may assume that $m>1$,
and that the assertion is true for all $R$-lattice functors $\boG$ of rank $1$ satisfying 
$|\max(\boG)|<m$.
Let $\boF\in \ob(\euF_R(\Gent_R(n,p)^{\op},\Rlat))$ with $|\max(\boF)|=m>1$. Hence 
$|\max(\boF^\wedge)|=m-1$, and, by induction, one has a short exact sequence
\begin{equation}
\label{eq:rank1seq3}
\xymatrix{
0\ar[r]&
\textstyle{\bigoplus_{j\in\min(\boF^\wedge)}}\boP^{j}\ar[r]^-{\alpha^\wedge}&
\textstyle{\bigoplus_{k\in\max(\boF^\wedge)}}\boP^{k}\ar[r]^-{\beta^\wedge}&\boF^\wedge\ar[r]&0.
}
\end{equation}
For $s=s(\boF)$, $t=t(\boF)$ and $\beta=(\iid_{\boP^s}\oplus\beta^\wedge)\circ\phi$ one has a 
commutative and exact diagram
\begin{equation}
\label{eq:rank1seq4}
\xymatrix{
&0\ar@{..>}[d]&0\ar[d]&&\\
0\ar@{..>}[r]&\kernel(\zeta)\ar@{..>}[d]\ar@{..>}[r]&\textstyle{\bigoplus_{j\in\min(\boF^\wedge)}}\boP^{j}
\ar[d]_{\alpha^\wedge}\ar[r]&0\ar[d]&\\
0\ar[r]&\ker(\beta)\ar[r]\ar@{-->}[d]_{\zeta}
&\boP^s\oplus\textstyle{\bigoplus_{k\in\max(\boF^\wedge)}}\boP^{k}
\ar[d]_{\iid_{\boP^s}\oplus\beta^\wedge}\ar[r]^-\beta
&\boF\ar@{=}[d]\ar[r]&0\\
0\ar[r]&\boP^t\ar@{..>}[d]\ar[r]^\psi&\boP^s\oplus\boF^\wedge\ar[r]^\phi\ar[d]&\boF\ar[r]\ar[d]&0\\
&0&0&0&
}
\end{equation}
where $\psi$ and $\phi$ are as in Lemma~\ref{lem:rank1seq} and $\zeta$ is the induced map.
By the snake lemma, one may extend this diagram by the arrows ``$\xymatrix@C.3truecm{\ar@{..>}[r]&}$''. 
Hence $\ker(\beta)\simeq\bigoplus_{j\in\min(\boF)}\boP^j$, and
this yields the claim.
\end{proof}

\begin{rem}
\label{rem:gentn1}
By Remark~\ref{rem:diafun}, every functor
$\boF\in\ob(\euF_R(\Gent_R(1,p)^{\op},\Rlat))$ of rank $1$ is projective and relative injective.
\end{rem}

Finally, 
one concludes the following theorem 
which is somehow counterintuitive 
in view of Remark~\ref{rem:gentle}.

\begin{thm}
\label{thm:gldimgent}
Let $p$ be a prime number, and let $R$ be a principal ideal domain
of characteristic $0$ such that $pR$ is a prime ideal. Then
\begin{itemize}
\item[(a)]  $\glatdim_R(\Gent_R(1,p))=0$ and $\gldim_R(\Gent_R(1,p))=1$; and
\item[(b)] $\glatdim_R(\Gent_R(n,p))=1$ and $\gldim_R(\Gent_R(n,p))=2$ for $n\geq 2$.
\end{itemize}
In particular, $(\Gent_R(n,p),\sigma)$ has the Whitehead property.
\end{thm}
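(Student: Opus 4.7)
The plan is to establish the global $R$-lattice dimension first, using the rank-$1$ filtration of Fact~\ref{fact:rank11} together with Proposition~\ref{prop:projrank1} (and Remark~\ref{rem:gentn1} in the case $n=1$), and then to bootstrap to $\gldim$ via the inequality $\gldim \le \glatdim + 1$ from \eqref{eq:dims} for the upper bound, matched by an explicit projective resolution of a simple functor for the lower bound. The Whitehead property will then follow for free from Fact~\ref{fact:glwh}, since $\glatdim$ will be finite in every case.

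For any $\boF\in\ob(\euF_R(\Gent_R(n,p)^{\op},\Rlat))$, Fact~\ref{fact:rank11} supplies an ascending filtration with rank-$1$ subquotients. When $n=1$, Remark~\ref{rem:gentn1} says every such subquotient is projective, so each extension in the filtration splits and an induction on the filtration length gives that $\boF$ itself is projective; hence $\glatdim_R(\Gent_R(1,p))=0$. When $n\ge 2$, Proposition~\ref{prop:projrank1} bounds the projective dimension of each rank-$1$ subquotient by $1$, and the standard long exact sequence argument (using $\prdim(B)\le\max(\prdim(A),\prdim(C))$ in any short exact sequence $0\to A\to B\to C\to 0$) propagates this bound up the filtration, so $\glatdim_R(\Gent_R(n,p))\le 1$. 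Equality is witnessed by any rank-$1$ lattice functor whose arrow diagram has a local minimum, e.g.\ the functor represented by $\xymatrix@C=.4truecm{0\ar[r]&1&2\ar[l]&3&\cdots}$; such a functor is non-projective by Fact~\ref{fact:hdrk1}, so Proposition~\ref{prop:projrank1} gives it projective dimension exactly $1$.

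For $\gldim$, the bound $\gldim\le\glatdim+1$ gives the upper bounds $1$ and $2$ in parts (a) and (b) respectively. For the matching lower bounds, by Auslander's equality \eqref{eq:gldim2} it suffices to exhibit a finitely generated functor of the correct projective dimension, and the natural candidates are the simple functors $\boS^\ell$ from \S\ref{ss:simpgent}. For $n=1$ I would read off directly from the arrow-diagram description of $\boP^0$ and $\boP^1$ that the canonical surjection $\boP^1\twoheadrightarrow\boS^1$ has kernel isomorphic to $\boP^0$, giving $\prdim(\boS^1)=1$. For $n\ge 2$ I would choose an interior $\ell\in\{1,\ldots,n-1\}$ and compute that the kernel $\boK$ of $\boP^\ell\twoheadrightarrow\boS^\ell$ is again a rank-$1$ lattice functor whose arrow diagram acquires a local minimum at $\ell$ (the $p$-factor introduced on the two edges adjacent to $\ell$ reverses the direction of both arrows in the rank-$1$ picture). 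By Fact~\ref{fact:hdrk1} such a $\boK$ is not projective, and Proposition~\ref{prop:projrank1} gives $\prdim(\boK)=1$, hence $\prdim(\boS^\ell)=2$. The Whitehead assertion is then immediate from Fact~\ref{fact:glwh}. The main bookkeeping obstacle is the explicit identification of the kernels $\kernel(\boP^\ell\twoheadrightarrow\boS^\ell)$ as specific rank-$1$ arrow diagrams, but this reduces to a local computation using only the defining relations (iii) and (iv) of $\Gent_R(n,p)$.
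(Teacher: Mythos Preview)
Your proposal is correct and follows essentially the same route as the paper: the rank-$1$ filtration of Fact~\ref{fact:rank11} combined with Proposition~\ref{prop:projrank1} (or Remark~\ref{rem:gentn1} for $n=1$) and the Horseshoe-type bound $\prdim(B)\le\max(\prdim(A),\prdim(C))$ gives the upper bound on $\glatdim$, the existence of a non-projective rank-$1$ functor (for $n\ge 2$) gives the matching lower bound, and then $\gldim\le\glatdim+1$ together with a computation of $\prdim(\boS^\ell)$ finishes the job. Your explicit identification of $\kernel(\boP^\ell\twoheadrightarrow\boS^\ell)$ as a rank-$1$ diagram with a local minimum at $\ell$ is a bit more detailed than what the paper writes down (it simply asserts $\prdim(\boS^\ell)=2$, resp.\ $=1$), but the underlying argument is the same.
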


\begin{proof} 
Suppose that $n\geq 2$.
By Proposition~\ref{prop:projrank1}, the projective dimension of
any $R$-lattice functor of rank 1 is less or equal to $1$.
Hence by Fact~\ref{fact:rank11}, induction on the rank
and the Horseshoe lemma \cite[Lemma~2.5.1]{ben:coh1},
$\Gent_R(n,p)$ is of global $R$-lattice dimension less or equal to $1$.
Since there are $R$-lattice functors of rank 1 which are not
projective, one concludes that $\glatdim_R(\Gent_R(n,p))=1$.
For any simple functor $\boS^\ell$, $0\leq \ell\leq n$, one has
$\prdim(\boS^\ell)=2$. Thus $\gldim_R(\Gent_R(n,p))=2$.

By Remark~\ref{rem:gentn1}, any $R$-lattice functor of rank 1
of $\Gent_R(1,p)$ is projective and relative injective.
Hence by Fact~\ref{fact:rank11}, induction on the rank
and the Horseshoe lemma, any $R$-lattice functor is projective,
i.e., $\glatdim_R(\Gent_R(1,p))=0$.
For the simple functors $\boS^\ell$, $\ell\in\{0,1\}$, one has
$\prdim(\boS^\ell)=1$. Thus $\gldim_R(\Gent_R(n,p))=1$.

The final remark is a direct consequence of Fact~\ref{fact:glwh}.
\end{proof}

%%%%%%

\subsection{Projective $R$-lattice functors}
\label{ss:projgent}
Let $R$ be a discrete valuation domain of characteristic $0$ with
maximal ideal $pR$. Then $R$ is a noetherian ring,
and every proper subfunctor $\boG\subsetneq\boF$ of a functor
$\boF\in\ob(\euF_R(\Gent_R(n,p)^{\op},\RMod^{\fg}))$ must be contained
in a maximal subfunctor $\boM\subsetneq\boF$.
Moreover, from the discussion in subsection~\ref{ss:simpgent}
one concludes that $\boF/\boM\simeq\boS^\ell$ for some $\ell\in\{0,\ldots,n\}$.
We define the {\it radical} of $\boF\ob(\euF_R(\Gent_R(n,p)^{\op},\RMod^{\fg}))$ by
\begin{equation}
\label{eq:rad}
\rad(\boF)=\bigcap_{\substack{\boM\subsetneq\boF\\ \boM\ \text{maximal}}}\boM.
\end{equation}
and the {\it head} of $\boF$ by $\hd(\boF)=\boF/\rad(\boF)$. In particular,
there exist non-negative integers $f_0,\ldots,f_n\in\N_0$ such that
\begin{equation}
\label{eq:hdgent}
\hd(\boF)\simeq f_0\boS^0\oplus\cdots\oplus f_n\boS^n.
\end{equation}
Here we used the abbreviation $m\boZ=\oplus_{1\leq j\leq m} \boZ$.
Moreover,
$\hd(\boF)=0$ if, and only if, $\boF=0$.
Furthermore, the following property holds.

\begin{fact}
\label{fact:sur}
Let $R$ be a discrete valuation domain of characteristic $0$ with
maximal ideal $pR$, and let $\phi\colon\boG\to\boF\in
\mor(\euF_R(\Gent_R(n,p)^{\op},\RMod^{\fg}))$ be a natural transformation
of functors with values in the category of finitely generated $R$-modules. 
Then $\phi$ is surjective if, and only if,
the induced map $\hd(\phi)\colon\hd(\boG)\to\hd(\boF)$ is surjective.
\end{fact}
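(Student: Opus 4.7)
The plan is to prove this as an instance of Nakayama's lemma in the functor category $\euF_R(\Gent_R(n,p)^{\op},\RMod^{\fg})$.

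The forward direction is formal: if $\phi$ is surjective, then the composite $\boG\to\boF\to\hd(\boF)$ is surjective, and because $\phi(\rad(\boG))\subseteq\rad(\boF)$ it factors through $\hd(\boG)$, forcing $\hd(\phi)$ to be surjective. To justify the containment, I would observe that for any maximal subfunctor $\boM\subsetneq\boF$ the preimage $\phi^{-1}(\boM)$ is a subfunctor of $\boG$ whose quotient embeds into $\boF/\boM\simeq\boS^\ell$ for some $\ell\in\{0,\ldots,n\}$ (using the classification of simple functors from \S\ref{ss:simpgent}). Hence $\phi^{-1}(\boM)$ is either all of $\boG$ or a maximal subfunctor of $\boG$; in either case $\rad(\boG)\subseteq\phi^{-1}(\boM)$, and intersecting over all such $\boM$ yields $\phi(\rad(\boG))\subseteq\rad(\boF)$.

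For the converse — the substantive direction — I would first establish the noetherian property. By Remark~\ref{rem:rep} the category $\euF_R(\Gent_R(n,p)^{\op},\RMod)$ is equivalent to $\mathbf{Mod}_{\mu}$ where $\mu=\mu_{\Gent_R(n,p)}$ is finitely generated as an $R$-module and hence a noetherian ring (as $R$ itself is noetherian). Consequently every $\boF\in\ob(\euF_R(\Gent_R(n,p)^{\op},\RMod^{\fg}))$ corresponds to a finitely generated, and therefore noetherian, $\mu$-module, so every proper subfunctor of $\boF$ is contained in a maximal one.

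Now, assuming $\hd(\phi)$ surjective, unpacking the definitions of $\rad$ and $\hd$ pointwise yields $\boF(k)=\image(\phi_k)+\rad(\boF)(k)$ for every $k\in\{0,\ldots,n\}$, that is, $\image(\phi)+\rad(\boF)=\boF$ as subfunctors of $\boF$. Suppose for contradiction that $\image(\phi)\subsetneq\boF$. By the noetherian observation there exists a maximal subfunctor $\boM\subsetneq\boF$ with $\image(\phi)\subseteq\boM$; but $\rad(\boF)\subseteq\boM$ by the definition of the radical, so $\boF=\image(\phi)+\rad(\boF)\subseteq\boM$, contradicting $\boM\subsetneq\boF$. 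Therefore $\image(\phi)=\boF$ and $\phi$ is surjective. The only real obstacle is guaranteeing the existence of maximal subfunctors containing a given proper subfunctor, which is precisely what the noetherian step supplies; the remainder is a direct transcription of classical Nakayama to the functor-category setting.
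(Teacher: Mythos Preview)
Your proof is correct. The paper states this as a Fact without proof, treating it as a standard Nakayama-type statement; your argument via the noetherian property of $\mu_{\Gent_R(n,p)}$ and the classification of simple functors is exactly the expected justification.
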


From this one concludes the following property.

\begin{fact}
\label{fact:rankgent}
Let $R$ be a discrete valuation domain of characteristic $0$ with
maximal ideal $pR$, and let $\boF\in\ob(\euF_R(\Gent_R(n,p)^{\op},\Rlat))$.
Then $\rk(\boF)\leq\dim_{\F}(\hd(\boF))$, and equality holds if, and only if,
$\boF$ is projective.
\end{fact}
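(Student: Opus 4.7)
The strategy is to construct the projective cover of $\boF$ and compare ranks at the vertex $0$. By Remark~\ref{rem:minproj} (transferred to $\Gent_R(n,p)$ via the unitary projection of \S\ref{ss:unipro1}), $\boP^\ell$ is the projective cover of $\boS^\ell$, so writing $\hd(\boF)\simeq\bigoplus_{\ell=0}^n f_\ell\boS^\ell$ as in \eqref{eq:hdgent} one obtains by lifting a natural transformation
\[
\xi\colon \boP:=\textstyle{\bigoplus_{\ell=0}^n} f_\ell\boP^\ell\longrightarrow \boF
\]
whose induced map $\hd(\xi)$ is an isomorphism. Fact~\ref{fact:sur} then yields that $\xi$ is surjective.

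By \eqref{eq:morgent}, $\boP^\ell(0)=\Hom_{\Gent_R(n,p)}(0,\ell)$ is a free $R$-module of rank $1$ for every $\ell\in\{0,\ldots,n\}$, so $\boP(0)$ is free of rank $\sum_\ell f_\ell=\dim_\F\hd(\boF)$. Pointwise surjectivity of $\xi$ (which holds since epimorphisms in $\euF_R(\Gent_R(n,p)^{\op},\RMod)$ are pointwise) gives a surjection of free $R$-modules $\xi_0\colon\boP(0)\to\boF(0)$, and hence
\[
\rk(\boF)=\rk_R(\boF(0))\leq\rk_R(\boP(0))=\dim_\F\hd(\boF),
\]
which is the desired inequality.

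For the equality case, if $\rk(\boF)=\dim_\F\hd(\boF)$ then $\xi_0$ is a surjection between free $R$-modules of the same finite rank, hence an isomorphism. Since both $\boP$ and $\boF$ are $R$-lattice functors, for every object $k$ one has $\rk_R(\boP(k))=\rk_R(\boP(0))$ and $\rk_R(\boF(k))=\rk_R(\boF(0))$ (by the argument opening \S\ref{ss:rank1}); hence each $\xi_k$ is a surjection between free $R$-modules of equal finite rank, and therefore an isomorphism. Thus $\xi$ is a natural isomorphism and $\boF\simeq\boP$ is projective. Conversely, if $\boF$ is projective, then $\xi$ splits as $\boP\simeq\boF\oplus\boK$; since $\hd(\xi)$ is an isomorphism the head $\hd(\boK)$ vanishes, so a Nakayama-type argument applied to the finitely generated functor $\boK$ forces $\boK=0$, yielding $\rk(\boF)=\rk(\boP)=\dim_\F\hd(\boF)$.

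The only mildly delicate step is the final Nakayama argument, showing that a finitely generated functor $\boK$ with $\hd(\boK)=0$ must vanish. This is a standard consequence of the noetherianity of the $R$-order $\mu_{\Gent_R(n,p)}$ of Remark~\ref{rem:rep} (any nonzero finitely generated module over a noetherian ring admits a maximal proper submodule, hence has nonzero head), and replaces any appeal to Krull--Schmidt.
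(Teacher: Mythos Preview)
Your proof is correct and follows essentially the same approach as the paper: construct $\boP=\bigoplus_\ell f_\ell\boP^\ell$, lift the head isomorphism to a surjection $\xi\colon\boP\to\boF$ via Fact~\ref{fact:sur}, compare ranks, and for the projective case split $\xi$ and kill the complement by its vanishing head. You merely spell out two steps the paper leaves implicit, namely why equality of ranks forces each $\xi_k$ to be an isomorphism (surjection of free $R$-modules of equal finite rank) and the Nakayama-type argument that $\hd(\boK)=0\Rightarrow\boK=0$.
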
 

\begin{proof}
Suppose that $\hd(\boF)\simeq f_0\boS^0\oplus\cdots\oplus f_n\boS^n$.
Put $\boP=f_0\boP^0\oplus\cdots\oplus f_n\boP^n$. Since $\boP$ is projective,
there exists a natural transformation $\phi\colon\boP\to\boF$ such that 
$\hd(\phi)\colon\hd(\boP)\to\hd(\boF)$ is an isomorphism. By Fact~\ref{fact:sur},
$\phi$ is surjective, and thus
\begin{equation}
\label{eq:rankgent}
\dim_{\F}(\hd(\boF))=\dim_{\F}(\hd(\boP))=\rk(\boP)\geq\rk(\boF).
\end{equation}
If $\rk(\boF)=\dim_{\F}(\hd(\boF))$, then $\phi$ must be an isomorphism.
Assume that $\boF$ is projective. Then $\phi$ is split-surjective, i.e., there exists a projective
$R$-lattice functor $\boQ\in\ob(\euF_R(\Gent_R(n,p)^{\op},\Rlat))$ such that
$\boP\simeq\boF\oplus\boQ$. As $\hd(\phi)$ is an isomorphism, this yields 
$\hd(\boQ)=0$. Hence $\boQ=0$, and $\boF$ is isomorphic to $\boP$.
\end{proof}

The proof of Fact~\ref{fact:rankgent} has shown also the following.

\begin{cor}
\label{cor:projgent}
Let $R$ be a discrete valuation domain of characteristic $0$ with
maximal ideal $pR$, and let $\boP\in\ob(\euF_R(\Gent_R(n,p)^{\op},\Rlat))$
be a projective $R$-lattice functor satisfying
$\hd(\boP)\simeq f_0\boS^0\oplus\cdots\oplus f_n\boS^n$.
Then $\boP\simeq f_0\boP^0\oplus\cdots\oplus f_n\boP^n$.
\end{cor}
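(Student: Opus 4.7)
The plan is to extract this statement directly from the proof of Fact~\ref{fact:rankgent}, which already contains the essential argument in its last three sentences. I would first construct the candidate projective functor $\boP^\star = f_0\boP^0\oplus\cdots\oplus f_n\boP^n$. Since $\hd(\boP^k)\simeq\boS^k$ by the description of simple functors in subsection~\ref{ss:simpgent} together with Remark~\ref{rem:diafun}(a), one has a canonical isomorphism $\hd(\boP^\star)\simeq\hd(\boP)$.

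Next, I would lift the composition $\boP^\star\twoheadrightarrow\hd(\boP^\star)\xrightarrow{\sim}\hd(\boP)$ along the surjection $\boP\twoheadrightarrow\hd(\boP)$ using projectivity of $\boP^\star$. This gives a natural transformation $\phi\colon\boP^\star\to\boP$ whose induced map on heads $\hd(\phi)$ is an isomorphism. By Fact~\ref{fact:sur}, $\phi$ is surjective.

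Finally, since $\boP$ is projective, $\phi$ is split-surjective, so $\boP^\star\simeq\boP\oplus\boQ$ with $\boQ\in\ob(\euF_R(\Gent_R(n,p)^{\op},\Rlat))$ projective. Applying $\hd$ to this decomposition and using that $\hd(\phi)$ is an isomorphism forces $\hd(\boQ)=0$, hence $\boQ=0$ (by the last remark preceding Fact~\ref{fact:sur}), so $\boP\simeq\boP^\star$, as claimed. There is no real obstacle here; the entire content is already present in the proof of Fact~\ref{fact:rankgent}, and the corollary is essentially a restatement tailored to projective $\boP$. The only minor point worth writing out explicitly is the identification $\hd(\boP^k)\simeq\boS^k$, which is immediate from the arrow-diagram description of $\boP^k$ in Remark~\ref{rem:diafun}(a) together with the classification of simples in subsection~\ref{ss:simpgent}.
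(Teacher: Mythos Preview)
Your proposal is correct and follows exactly the paper's approach: the paper does not give a separate argument for this corollary but simply states that the proof of Fact~\ref{fact:rankgent} already establishes it, and your write-up reproduces precisely that argument (construct $\boP^\star$, lift to a surjection $\phi\colon\boP^\star\to\boP$ via Fact~\ref{fact:sur}, split using projectivity of $\boP$, and kill the complement via $\hd(\boQ)=0$).
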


%%%%%%%%%%%%%%%%%%%%%%
%%%%%% CycDef, last changes 6/9/2012
%%%%%%%%%%%%%%%%%%%%%%

\section{Cohomological Mackey functors for cyclic p-groups}
\label{s:cmcp}
Throughout this section we assume that $R$ is a discrete valuation domain
of characteristic $0$ with maximal ideal $pR$ and that $G$ is a finite cyclic $p$-group of order
$p^n$.

\subsection{The deflation functor}
\label{ss:unipro2}
Let $\pi\colon \mf_R(G)\longrightarrow\Gent_R(n,p)$ denote the unitary projection
(cf. \S\ref{ss:unipro1}), let
$\argu^\pi=\ifl^\pi(\dfl^\pi(\argu))$, and let $\eta\colon\iid_{\MC_G(\RMod)}\to\argu^\pi$
denote the unit of the adjunction. In particular, $\eta_\boX\colon\boX\to\boX^\pi$
is surjective, and $\eta_{\boX,G}\colon\boX_G\to\boX^\pi_G$ is an isomorphism
for all $\boX\in\ob(\MC_G(\RMod))$.

\begin{fact}
\label{fact:defcat1}
Let $G$ be a finite cyclic group of order $p^n$,
let $R$ be an integral domain of characteristic $0$,
and let $\boX\in\ob(\MC_G(\Rlat))$ be a cohomological
$R$-lattice functor which is Hilbert$^{90}$.
Then one has a canonical isomorphism
\begin{equation}
\label{eq:defcat1}
\dfl^\pi(\boX)(k)\simeq \image(t_{U,G}^\boX)\subseteq \boX_G,
\end{equation}
for $U\subseteq G$, $|U|=p^{n-k}$.
\end{fact}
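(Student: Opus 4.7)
The plan is to unwind the explicit description of $\dfl^\pi$ from \S\ref{ss:unipro1} and combine it with the Hilbert$^{90}$ hypothesis via Proposition~\ref{prop:H90c}. For $U\subseteq G$ with $|U|=p^{n-k}$, formula~\eqref{eq:unitar2} gives $\dfl^\pi(\boX)(k)=(\boX_U)_{C_{p^n}}$. Since $G=C_{p^n}$ is cyclic, $U$ is normal in $G$; axioms (cMF$_1$) and (cMF$_3$) then say the conjugation maps $c_{g,U}^\boX$ endow $\boX_U$ with the structure of an $R[G/U]$-module, because $c_{u,U}^\boX=\iid_{\boX_U}$ for $u\in U$ kills the action of $U$. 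Hence $(\boX_U)_{C_{p^n}}$ coincides with the $G/U$-coinvariants $\boX_U/\omega_{R[G/U]}\boX_U$.

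Next I would factor the transfer through these coinvariants. Axiom (cMF$_5$) gives
\[
c_{g,G}^\boX\circ t_{U,G}^\boX=t_{U,G}^\boX\circ c_{g,U}^\boX
\]
for every $g\in G$, and since $c_{g,G}^\boX=\iid_{\boX_G}$ by (cMF$_1$), the map $t_{U,G}^\boX$ is $G/U$-equivariant with trivial action on the target. It therefore descends to a canonical surjective $R$-homomorphism
\[
\bar{t}^\boX_U\colon (\boX_U)_{G/U}\longrightarrow \image(t_{U,G}^\boX)\subseteq \boX_G.
\]
Comparing with the definitions in \eqref{eq:seccoh}, the kernel of $\bar{t}^\boX_U$ is precisely $\boc_1(G/U,\boX)=\kernel(t_{U,G}^\boX)/\omega_{R[G/U]}\boX_U$.

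To conclude, I would invoke Proposition~\ref{prop:H90c}: since $\boX$ is Hilbert$^{90}$ and $G/U$ is cyclic, one has $\boc_1(G/U,\boX)=0$. Thus $\bar{t}^\boX_U$ is injective, hence an isomorphism, which yields the desired canonical identification $\dfl^\pi(\boX)(k)\simeq\image(t_{U,G}^\boX)$. Naturality in $U$ is automatic from the construction, since the isomorphism is induced by the transfer map itself.

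The only nontrivial input is the vanishing of $\boc_1(G/U,\boX)$, which ultimately rests on the $2$-periodicity of Tate cohomology on the cyclic group $G/U$ together with the Galois descent property of $\boX$. Both of these are already packaged into Proposition~\ref{prop:H90c}, so once that is in hand the remainder of the argument is a formal unwinding of definitions and of the Mackey-functor axioms.
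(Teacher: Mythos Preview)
Your argument is correct and follows essentially the same route as the paper: both factor $t_{U,G}^\boX$ through the $G/U$-coinvariants of $\boX_U$ and show injectivity of the induced map by proving $\boc_1(G/U,\boX)=0$. The only cosmetic difference is that the paper appeals directly to Tate periodicity and \eqref{eq:isolat} to get $\hH^{-1}(G/U,\boX_U)=0$, whereas you package the same step via Proposition~\ref{prop:H90c}.
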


\begin{proof}
Let $g\in G$ be a generator of $G$, i.e., $\dfl^\pi(\boX)(k)=\boX_U/(1-g)\boX_U$.
Periodicity of Tate cohomology implies that $\hH^{-1}(G/U,\boX_U)=H^1(G/U,\boX_U)=0$.
Thus by \eqref{eq:isolat}, $\kernel(t^{\boX}_{U,G})=(1-g)\boX_U$. Hence the induced map
\begin{equation}
\label{eq:defcat2}
\xymatrix{
\boX_U/(1-g)\boX_U\ar[r]^-{\tilde{t}^\boX_{U,G}}&\boX_G
}
\end{equation}
is injective. This yields the claim.
\end{proof}

From Fact~\ref{fact:defcat1} one concludes the following.

\begin{cor}
\label{cor:defcat}
Let $G$ be a finite cyclic group of order $p^n$,
let $R$ be an integral domain of characteristic $0$,
and let $\phi\colon\boX\to\boY\in\mor(\MC_G(\Rlat))$ be a natural transformation of cohomological
$R$-lattice functors with the following properties:
\begin{itemize}
\item[(i)] $\boX$ and $\boY$ are Hilbert$^{90}$;
\item[(ii)] $\phi_G\colon\boX_G\to\boY_G$ is injective.
\end{itemize}
Then $\dfl^\pi(\phi)\colon\dfl^\pi(\boX)\to\dfl^\pi(\boY)$ is injective. In particular, if 
\begin{equation}
\label{eq:defcat3}
\xymatrix{
0\ar[r]&
\boX\ar[r]^{\alpha}&
\boY\ar[r]^{\beta}&
\boZ\ar[r]& 0}
\end{equation} 
is a short exact sequence of $R$-lattice functors
all of which are Hilbert$^{90}$, then 
\begin{equation}
\label{eq:defcat4}
\xymatrix{
0\ar[r]&
\dfl^\pi(\boX)\ar[r]^{\dfl^\pi(\alpha)}&
\dfl^\pi(\boY)\ar[r]^{\dfl^\pi(\beta)}&
\dfl^\pi(\boZ)\ar[r]& 0}
\end{equation} 
is exact.
\end{cor}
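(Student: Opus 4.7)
The plan is to reduce both assertions to a transparent description of the deflation functor via transfer maps, which is made available by Fact~\ref{fact:defcat1}, and then to couple this with right-exactness of $\dfl^\pi$.

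First I will establish injectivity of $\dfl^\pi(\phi)$ object by object. Fix $k\in\{0,\ldots,n\}$ and let $U\subseteq G$ be the unique subgroup with $|U|=p^{n-k}$. By \eqref{eq:unitar2}, $\dfl^\pi(\boX)(k)=\boX_U/(1-g)\boX_U$ where $g$ is a generator of $G$, and since $\boX$ is Hilbert$^{90}$, Fact~\ref{fact:defcat1} identifies this quotient with $\image(t^{\boX}_{U,G})\subseteq \boX_G$ via the induced transfer $\tilde t^{\boX}_{U,G}$. The analogous identification holds for $\boY$. By naturality of $\phi$ one has $t^{\boY}_{U,G}\circ \phi_U=\phi_G\circ t^{\boX}_{U,G}$, so under these identifications $\dfl^\pi(\phi)(k)$ corresponds to the restriction $\phi_G\vert_{\image(t^{\boX}_{U,G})}\colon \image(t^{\boX}_{U,G})\longrightarrow \image(t^{\boY}_{U,G})$. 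Since $\phi_G$ is injective by hypothesis (ii), this restriction is injective for every $k$, proving the first assertion.

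For the short exact sequence, note that $\alpha$ being injective as a natural transformation in $\MC_G(\Rlat)$ forces each component $\alpha_V$ to be injective, in particular $\alpha_G$. All three of $\boX$, $\boY$, $\boZ$ are Hilbert$^{90}$ by hypothesis, so applying the first part to $\phi=\alpha$ yields that $\dfl^\pi(\alpha)$ is injective. On the other hand, by Fact~\ref{fact:dfl}(a) the functor $\dfl^\pi$ is a left adjoint and hence right-exact, so the tail
\begin{equation*}
\dfl^\pi(\boX)\xrightarrow{\dfl^\pi(\alpha)}\dfl^\pi(\boY)\xrightarrow{\dfl^\pi(\beta)}\dfl^\pi(\boZ)\longrightarrow 0
\end{equation*}
is exact. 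Combining injectivity of $\dfl^\pi(\alpha)$ with this right-exact tail gives the desired short exact sequence \eqref{eq:defcat4}.

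The main subtlety will be to be explicit that the isomorphism in Fact~\ref{fact:defcat1} is natural in the functor variable, so that the commutative square relating $\dfl^\pi(\phi)(k)$ to the restriction of $\phi_G$ can be drawn without ambiguity; this is immediate from the definition of the isomorphism as the induced map $\tilde t^{\boX}_{U,G}$ and the Mackey-functor axiom (cMF$_2$)/naturality of $\phi$, but it is the one point worth pinning down carefully before concluding injectivity. Everything else is formal.
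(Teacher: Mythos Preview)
Your proof is correct and is precisely the argument the paper has in mind when it writes ``From Fact~\ref{fact:defcat1} one concludes the following'': you identify $\dfl^\pi(\boX)(k)$ with $\image(t^{\boX}_{U,G})\subseteq\boX_G$ via Fact~\ref{fact:defcat1}, observe that under this identification $\dfl^\pi(\phi)(k)$ is the restriction of $\phi_G$, and then invoke right-exactness of the left adjoint $\dfl^\pi$ for the short exact sequence. One minor quibble: the commutativity $\phi_G\circ t^{\boX}_{U,G}=t^{\boY}_{U,G}\circ\phi_U$ comes from the definition of a morphism of cohomological Mackey functors (\S\ref{ss:Gmac1}), not from axiom (cMF$_2$); your parenthetical ``naturality of $\phi$'' is the correct justification.
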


\begin{prop}
\label{prop:defcat}
Let $G$ be a finite cyclic group of order $p^n$,
let $R$ be an Dedekind domain of characteristic $0$,
and let $\boX\in\ob(\MC_G(\Rlat))$ be a cohomological $R$-lattice functor which is 
Hilbert$^{90}$. Then $\boX$ is $\pi$-acyclic.
\end{prop}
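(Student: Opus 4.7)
The plan is to exhibit a projective $R$-lattice resolution of $\boX$ whose syzygies are all Hilbert$^{90}$, and then invoke Corollary~\ref{cor:defcat} to conclude that $\dfl^\pi$ is exact on this resolution, which is exactly the statement that $\boX$ is $\pi$-acyclic.

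First I would construct the resolution iteratively. Since $R$ is a Dedekind domain and $\boX$ is an $R$-lattice functor, Fact~\ref{fact:rlatproj} provides a projective resolution $(\boP_\bullet,\partial_\bullet^\boP,\eps_\boX)$ by projective $R$-lattice functors. Break it into the short exact sequences
\begin{equation*}
\xymatrix{0\ar[r]&\boK_i\ar[r]&\boP_i\ar[r]&\boK_{i-1}\ar[r]&0}
\end{equation*}
with $\boK_{-1}=\boX$ and $\boK_i=\kernel(\partial_i^\boP)$ for $i\geq 0$. I would then verify by induction on $i$ that every $\boK_{i-1}$ is a Hilbert$^{90}$ $R$-lattice functor. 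The base case $i=0$ is the hypothesis on $\boX$. For the induction step, assume $\boK_{i-1}$ is Hilbert$^{90}$; in particular it is of type $H^0$ (by the definition in \S\ref{ss:h0}). Since $\boP_i$ is a projective $R$-lattice functor, Proposition~\ref{prop:cycint}(a) gives that $\boK_i$ is an $R$-lattice functor, and Proposition~\ref{prop:cycint}(c) gives that $\boK_i$ is Hilbert$^{90}$.

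Next I would apply $\dfl^\pi$ to each of these short exact sequences. All three terms are Hilbert$^{90}$ $R$-lattice functors (projectives are Hilbert$^{90}$ by Remark~\ref{rem:projH0}, and the syzygies by the previous step), so Corollary~\ref{cor:defcat} yields that
\begin{equation*}
\xymatrix{0\ar[r]&\dfl^\pi(\boK_i)\ar[r]&\dfl^\pi(\boP_i)\ar[r]&\dfl^\pi(\boK_{i-1})\ar[r]&0}
\end{equation*}
is exact for every $i\geq 0$. Splicing these short exact sequences back together shows that the chain complex $(\dfl^\pi(\boP_\bullet),\dfl^\pi(\partial_\bullet^\boP))$ is a resolution of $\dfl^\pi(\boX)$, i.e., $\caL_k\dfl^\pi(\boX)=H_k(\dfl^\pi(\boP_\bullet),\dfl^\pi(\partial_\bullet^\boP))=0$ for all $k>0$, which is the claim.

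The only non-routine ingredient is Corollary~\ref{cor:defcat}, but this is available from the preceding discussion, where the explicit description $\dfl^\pi(\boX)(k)\simeq\image(t^\boX_{U,G})$ for Hilbert$^{90}$ functors already does the work via the vanishing of $\hH^{-1}(G/U,\boX_U)$ that follows from the Hilbert$^{90}$ property together with the periodicity of Tate cohomology. Hence there is no genuine obstacle; the proof consists essentially of propagating the Hilbert$^{90}$ property down the syzygies and then quoting the corollary.
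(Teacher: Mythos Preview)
Your proof is correct and follows essentially the same route as the paper: take a projective $R$-lattice resolution, split it into short exact sequences, propagate the Hilbert$^{90}$ property to all syzygies via Proposition~\ref{prop:cycint}(c), and then apply Corollary~\ref{cor:defcat} to each piece to conclude that the deflated complex stays exact. The only cosmetic difference is that the paper indexes the syzygies as images $\boQ_k=\image(\partial_k^\boP)$ rather than kernels $\boK_i=\kernel(\partial_i^\boP)$, which amounts to a shift in labeling.
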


\begin{proof}
Let $(\boP_\bullet,\partial^\boP_\bullet,\eps_\boX)$ be a projective
resolution of $\boX$ in $\MC_G(\RMod)$ with $\boP_k$ projective $R$-lattice
functors. In particular, $\boQ_k=\image(\partial_k^P)$, $k\geq 1$, is an $R$-lattice functor.
By construction, one has the short exact sequences
\begin{equation}
\label{eq:defcat5}
\xymatrix@R=3pt{
0\ar[r]&\boQ_1\ar[r]&\boP_0\ar[r]&\boX\ar[r]&0,\\
0\ar[r]&\boQ_{k+1}\ar[r]&\boP_k\ar[r]&\boQ_k\ar[r]&0,
}
\end{equation}
for $k\geq 1$. Thus by induction and Proposition~\ref{prop:cycint},
$\boQ_k$ is a Hilbert$^{90}$ $R$-lattice functors for all $k\geq 1$. 
Hence by Corollary \ref{cor:defcat}, one has
short exact sequences
\begin{equation}
\label{eq:defcat6}
\xymatrix@R=3pt{
0\ar[r]&\dfl^\pi(\boQ_1)\ar[r]&\dfl^\pi(\boP_0)\ar[r]&\dfl^\pi(\boX)\ar[r]&0,\\
0\ar[r]&\dfl^\pi(\boQ_{k+1})\ar[r]&\dfl^\pi(\boP_k)\ar[r]&\dfl^\pi(\boQ_k)\ar[r]&0,
}
\end{equation}
for $k\geq 1$. This implies that $\caL_k\dfl^\pi(\boX)=0$ for all $k\geq 1$.
\end{proof}

Let $R$ be a discrete valuation domain of characteristic $0$ with
maximal ideal $pR$, and let $G$ be a cyclic $p$-group.
As in subsection~\ref{ss:projgent} one concludes that every
proper subfunctor $\boY\subsetneq\boX$ of a cohomological
$G$-Mackey functor $\boX\in\ob(\MC_G(\RMod^{\fg}))$ must be
contained is a maximal subfunctor $\boM\subsetneq\boX$.
Therefore we define the {\it radical} of $\boX\in\ob(\MC_G(\RMod^{\fg}))$ by
\begin{equation}
\label{eq:radmac}
\rad(\boX)=\bigcap_{\substack{\boM\subsetneq\boX\\ \boM\ \text{maximal}}}\boM.
\end{equation}
and the {\it head} of $\boX$ by $\hd(\boX)=\boX/\rad(\boX)$. 
By Remark~\ref{rem:minproj}, there exist non-negative integers
$f_U\in\N_0$, $U\subseteq G$, such that 
\begin{equation}
\label{eq:hdmac}
\hd(\boX)\simeq\textstyle{\bigoplus_{U\subseteq G} f_U\boS^U}.
\end{equation}
Since every simple cohomological $G$-Mackey functor 
$\boS\in\ob(\MC_G(\RMod^{\fg}))$ is isomorphic to 
$\ifl^\pi(\bSigma)$ for some simple functor
$\bSigma\in\ob(\euF_R(\Gent_R(n,p)^{\op},\RMod^{\fg}))$,
one has $\kernel(\eta_\boX)\subseteq \rad(\boX)$.
This inclusion has the following consequence.

\begin{prop}
\label{prop:surmac}
Let $R$ be a discrete valuation domain of characteristic $0$ with maximal ideal $pR$,
and let $G$ be a finite cyclic $p$-group. Let
$\phi\colon\boX\to\boY$ be a natural transformation of cohomological
$G$-Mackey functors with values in the category $\RMod^{\fg}$. 
Then the following are equivalent.
\begin{itemize}
\item[(i)] $\phi$ is surjective;
\item[(ii)] $\phi^\pi\colon\boX^\pi\to\boY^\pi$ is surjective;
\item[(iii)] $\hd(\phi)\colon\hd(\boX)\to\hd(\boY)$ is surjective.
\end{itemize}
\end{prop}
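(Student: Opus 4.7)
The strategy is to prove the cyclic chain (i)$\Rightarrow$(ii)$\Rightarrow$(iii)$\Rightarrow$(i). The implication (i)$\Rightarrow$(ii) is immediate: the deflation functor $\dfl^\pi$ is a left adjoint and hence right exact, while the inflation $\ifl^\pi$ is exact, so $\argu^\pi=\ifl^\pi\circ\dfl^\pi$ preserves surjections.

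For (ii)$\Rightarrow$(iii), the key step is to observe that the unit $\eta_\boX\colon\boX\twoheadrightarrow\boX^\pi$ induces an isomorphism $\hd(\boX)\to\hd(\boX^\pi)$. Using the inclusion $\kernel(\eta_\boX)\subseteq\rad(\boX)$ recorded just before the proposition, every maximal subfunctor of $\boX$ contains $\kernel(\eta_\boX)$ and thus descends bijectively to a maximal subfunctor of $\boX^\pi$; therefore $\rad(\boX)=\eta_\boX^{-1}(\rad(\boX^\pi))$, and the induced map on heads is an isomorphism. Naturality of $\eta$ then gives a commutative square linking $\hd(\phi)$ and $\hd(\phi^\pi)$ whose vertical arrows are isomorphisms, so surjectivity of $\hd(\phi^\pi)$, which follows from surjectivity of $\phi^\pi$ by functoriality of $\hd$, transfers to $\hd(\phi)$.

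For the substantive implication (iii)$\Rightarrow$(i), set $\boZ=\coker(\phi)$ and observe that the composite $\hd(\boX)\to\hd(\boY)\twoheadrightarrow\hd(\boZ)$ vanishes while the second arrow is surjective; combined with surjectivity of $\hd(\phi)$, this forces $\hd(\boZ)=0$. The conclusion then rests on the Nakayama-style principle recalled in the paragraph preceding the proposition: since $R$ is noetherian, every proper subfunctor of an object of $\MC_G(\RMod^{\fg})$ is contained in a maximal one, so a nonzero finitely generated cohomological Mackey functor must have nontrivial head. Hence $\boZ=0$ and $\phi$ is surjective. The main obstacle is establishing the two head-level facts above — the isomorphism $\hd(\boX)\simeq\hd(\boX^\pi)$ induced by $\eta_\boX$, and the vanishing criterion $\hd(\boZ)=0\Rightarrow\boZ=0$ — both of which ultimately rely on the classification of simple cohomological $G$-Mackey functors from Remark~\ref{rem:minproj}, asserting that every simple is inflated from the gentle category $\Gent_R(n,p)$.
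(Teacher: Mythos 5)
Your proof is correct and takes essentially the same route as the paper: the implications (i)$\Rightarrow$(ii)$\Rightarrow$(iii) rest on the inclusion $\kernel(\eta_{\boX})\subseteq\rad(\boX)$ (which the paper packages as the factorization of the natural surjection $\tau\colon\iid\to\hd(\argu)$ through $\eta$), and (iii)$\Rightarrow$(i) is the same Nakayama-type argument, which you run on $\boZ=\coker(\phi)$ where the paper instead picks a maximal subfunctor $\boM\subsetneq\boY$ containing $\image(\phi)$ and tests against $\nat_G(\argu,\boY/\boM)$. One small correction to your closing remark: the vanishing criterion $\hd(\boZ)=0\Rightarrow\boZ=0$ needs only the existence of maximal subfunctors of nonzero finitely generated functors (noetherianity), not the classification of simples as inflations from $\Gent_R(n,p)$ --- that classification is needed only for $\kernel(\eta_{\boX})\subseteq\rad(\boX)$.
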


\begin{proof}
The natural surjection $\tau\colon\iid\to\hd(\argu)$ factors through 
the natural surjection $\eta\colon\iid\to\argu^\pi$, i.e., there exists a natural surjection 
$\psi\colon\argu^\pi\to\hd(\argu)$ such that $\tau=\psi\circ\eta$.
This yields the implications (i) $\Rightarrow$ (ii) $\Rightarrow$ (iii).
Suppose that (iii) holds and that $\phi$ is not surjective. Then
$\image(\phi)$ is contained in a maximal subfunctor $\boM\subsetneq\boY$.
Thus for $\boS=\boY/\boM$, the kernel of the map
$\phi_\ast\colon\nat_G(\boY,\boS)\to\nat_G(\boX,\boS)$ is non-trivial.
However, in the commutative diagram
\begin{equation}
\label{eq:diasur}
\xymatrix{
\nat_G(\hd(\boY),\boS)\ar[r]^{\hd(\phi)_\ast}\ar[d]_{(\tau_\boY)_\ast}&
\nat_G(\hd(\boX),\boS)\ar[d]^{(\tau_\boX)_\ast}\\
\nat_G(\boY,\boS)\ar[r]^{\phi_\ast}&\nat_G(\boX,\boS)
}
\end{equation}
the vertical maps are isomorphisms, and $\hd(\phi)_\ast$ is injective
forcing $\kernel(\phi_\ast)=0$, a contradiction. This yields the claim.
\end{proof}

We are now ready to prove the following theorem which is one of the key results
in this paper.

\begin{thm}
\label{thm:defcat}
Let $R$ be a discrete valuation domain of characteristic $0$ 
with maximal ideal $pR$, 
let $G$ be a finite cyclic $p$-group,
and let $\boX\in\ob(\MC_G(\Rlat))$
be a cohomological $G$-Mackey functor
with values in the category of $R$-lattices which is Hilbert$^{90}$. 
Then there exists a finite $G$-set $\Omega$
such that $\boX\simeq\boh^0(R[\Omega])$.
In particular, $\boX$ is projective.
\end{thm}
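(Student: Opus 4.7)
My plan is to transport the problem along the unitary projection $\pi\colon\mf_R(G)\to\Gent_R(n,p)$ of subsection~\ref{ss:unipro1}, exploit the exceptionally good behaviour of $\Gent_R(n,p)$ established in Theorem~\ref{thm:gldimgent}, and then recover $\boX$ itself by a head comparison. The engine of the reduction is Theorem~\ref{thm:whitehead}, whose three hypotheses I would verify for $\pi$ as follows: (i) the $\circledast$-symmetry of $\mf_R(G)$ is Proposition~\ref{prop:Ryon}; (ii) the Whitehead property of $\Gent_R(n,p)$ is part of Theorem~\ref{thm:gldimgent}; and (iii) the implication ``$\boF^\circledast$ is $\circledast$-acyclic $\Rightarrow$ $\boF$ is $\pi$-acyclic'' combines the equivalences of Proposition~\ref{prop:Ryon} (making $\boF$ Hilbert$^{90}$) with Proposition~\ref{prop:defcat} (which then delivers $\pi$-acyclicity). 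Since $\boX$ is Hilbert$^{90}$ it is $\circledast$-acyclic by Proposition~\ref{prop:Ryon}, and Theorem~\ref{thm:whitehead} then yields that $\dfl^\pi(\boX)$ is a projective $R$-lattice functor on $\Gent_R(n,p)$; by Corollary~\ref{cor:projgent} one has $\dfl^\pi(\boX)\simeq\bigoplus_{k=0}^n f_k\boP^k$ with uniquely determined $f_k\in\N_0$.

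Next I would lift this decomposition to $\mf_R(G)$. Let $U_k\subseteq G$ be the unique subgroup of index $p^k$ and set $\boY=\bigoplus_{k=0}^n f_k\boP^{U_k}$. Then $\dfl^\pi(\boY)\simeq\bigoplus_k f_k\boP^k\simeq\dfl^\pi(\boX)$ by~\eqref{eq:dflps}, the functor $\boY$ is projective in $\MC_G(\RMod)$ and therefore Hilbert$^{90}$ by Remark~\ref{rem:projH0}, and Remark~\ref{rem:minproj} gives $\hd(\boY)\simeq\bigoplus_k f_k\boS^{U_k}$. The inclusion $\kernel(\eta_\boX)\subseteq\rad(\boX)$ noted in subsection~\ref{ss:unipro2}, together with the fact that $\ifl^\pi$ sends simples to simples, identifies $\hd(\boX)$ with $\bigoplus_k f_k\boS^{U_k}$ as well, so $\hd(\boY)\simeq\hd(\boX)$. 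Using the projectivity of $\boY$ I would lift such an isomorphism to a natural transformation $\phi\colon\boY\to\boX$ with $\hd(\phi)$ an isomorphism; Proposition~\ref{prop:surmac} then makes $\phi$ surjective.

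Finally I would show that $\phi$ is also injective. Let $\boK=\kernel(\phi)$. Proposition~\ref{prop:cycint} promotes $\boK$ to a Hilbert$^{90}$ $R$-lattice functor, so Corollary~\ref{cor:defcat} produces a short exact sequence $0\to\dfl^\pi(\boK)\to\dfl^\pi(\boY)\to\dfl^\pi(\boX)\to 0$, which splits because $\dfl^\pi(\boX)$ is projective; comparing heads of the summands (or ranks, via Fact~\ref{fact:rankgent}) forces $\dfl^\pi(\boK)=0$, whence $\dfl^\pi(\phi)$ is an isomorphism. In particular $\phi_G=\dfl^\pi(\phi)(0)$ is an isomorphism of $R$-lattices and $\gr_0(\phi_G)$ is injective. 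Since $\boX$ and $\boY$ are Hilbert$^{90}$, Proposition~\ref{prop:torhil} makes $\gr_0(\boX)$ and $\gr_0(\boY)$ both of type $H^0$, so the injectivity criterion Proposition~\ref{prop:inj} applies and $\phi$ is split-injective; together with surjectivity this gives $\boX\simeq\boY$, and Fact~\ref{fact:projcohmac} together with the commutation of $\boh^0$ with finite direct sums yields $\boX\simeq\boh^0(R[\Omega])$ for $\Omega=\bigsqcup_k f_k\cdot(G/U_k)$.

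The main obstacle will be this final injectivity step: passing from the isomorphism $\dfl^\pi(\phi)$, which one essentially obtains ``for free'' from the Whitehead property of $\Gent_R(n,p)$, to an isomorphism of the original Mackey functors requires feeding the mod-$p$ reductions of Proposition~\ref{prop:torhil} into the fibre-over-$\F$ injectivity criterion Proposition~\ref{prop:inj}. It is precisely this step that makes the hypothesis ``$R$ a DVR of characteristic $0$ with residue characteristic $p$'' essential, and it is where the Hilbert$^{90}$ property gets used both for $\boX$ itself and for the projective $\boY$.
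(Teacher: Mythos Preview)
Your proposal is correct and matches the paper's proof almost step for step: both verify the hypotheses of Theorem~\ref{thm:whitehead} for $\pi$, deduce that $\dfl^\pi(\boX)$ is projective and hence of the form $\bigoplus_k f_k\boP^k$, build the projective comparison functor $\bigoplus_k f_k\boP^{U_k}$, and conclude via Proposition~\ref{prop:surmac} (surjectivity) together with Propositions~\ref{prop:torhil} and~\ref{prop:inj} (split-injectivity). The only tactical difference is that the paper lifts an explicit isomorphism $\boP^\pi\simeq\boX^\pi$ along the surjection $\eta_\boX$, so that $\phi_G$ is an isomorphism by construction, whereas you lift a head isomorphism and then recover the isomorphism on $\phi_G$ through your rank argument on $\dfl^\pi(\kernel\phi)$; both routes are valid and use the same ingredients.
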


\begin{proof}
The deflation functor
$\dfl^\pi\colon\MC_G(\RMod)\longrightarrow\euF_R(\Gent_R(p,n),\RMod)$
associated to the unitary projection $\pi\colon\mf_R(G)\longrightarrow
\Gent_R(p,n)$ has the following properties:
\begin{itemize}
\item[(1)] $(\mf_R(G)),\sigma)$ is $\circledast$-symmetric (cf. Prop.~\ref{prop:Ryon}).
\item[(2)] $\Gent_R(n,p)$ has global $R$-lattice dimension less or equal to $1$ 
(cf. Thm.~\ref{thm:gldimgent}), and thus has the Whitehead property (cf. Fact~\ref{fact:glwh}
and Fact~\ref{fact:gldim1}).
\item[(3)] An $R$-lattice functor $\boY\in\ob(\MC_G(\Rlat))$ is
$\circledast$-acyclic if, and only if, it has the Hilbert$^{90}$ property
(cf. Prop.~\ref{prop:Ryon}). By Proposition~\ref{prop:defcat},
such a functor is $\pi$-acyclic.
\end{itemize}
In particular, the hypothesis of Theorem~\ref{thm:whitehead} are satisfied, and one concludes
that $\boZ=\dfl^\pi(\boX)\in\ob(\euF_R(\Gent_R(n,p),\RMod))$ is projective.
Hence there exist non-negative integers $f_0,\ldots, f_n$ such that 
$\boZ\simeq f_0\boP^0\oplus\cdots f_n\boP^n$ (cf. Cor.~\ref{cor:projgent}).

Let $\eta_{\boX}\colon\boX\to\boX^\pi$ be the canonical map (cf. \S\ref{ss:unipro2}),
i.e., $\boX^\pi\simeq\bigoplus_{0\leq k\leq n}\ifl^\pi(f_k\boP^k)$.
Let $U_k\subseteq G$ denote the unique subgroup of $G$ of index $p^k$,
and let $\Omega$ be the $G$-set
$\Omega=\bigsqcup_{0\leq k\leq n} f_k(G/U_k)$. Put $\boP=\boh^0(R[\Omega])$.
Then $\boP\in\ob(\MC_G(\Rlat))$ is projective (cf. Fact~\ref{fact:projcohmac}).
Since $\dfl^\pi(\boh^0(R[G/U_k]))\simeq \boP^k$ for all $k\in\{0,\ldots,n\}$, one has
an isomorphism $\phi^\pi\colon\boP^\pi\to\boX^\pi$. Since $\boP$ is projective,
there exists a homomorphism of cohomological $G$-Mackey functors such that the
diagram
\begin{equation}
\xymatrix{
\boP\ar[r]^{\phi}\ar[d]_{\eta_{\boP}}&\boX\ar[d]^{\eta_\boX}\\
\boP^\pi\ar[r]^{\phi^\pi}&\boX^{\pi}
}
\end{equation}
commutes. By construction, $\phi^\pi_G$ is an isomorphism,
and $\eta_{\boP,G}$ and $\eta_{\boX,G}$ are isomorphisms (cf. \S\ref{ss:unipro2}).
Thus $\phi_G$ is an isomorphism.
In particular, with the same notations as used in subsection \ref{ss:injcrit}, the map
$\gr_0(\phi_G)\colon\gr_0(\boP)\to\gr_0(\boX)$ is an isomorphism.
By hypothesis, $\boX$ is an $R$-lattice functor with the Hilbert$^{90}$ property,
and the same is true for $\boP$ (cf. Remark~\ref{rem:projH0}).
Hence $\gr_0(\boX)$ and $\gr_0(\boP)$ are of type $H^0$ (cf. Prop.~\ref{prop:torhil}),
and $\phi$ is split-injective (cf. Prop.~\ref{prop:inj}).
Moreover, by Proposition~\ref{prop:surmac}, $\phi$ must be surjective. 
This yields the claim.
\end{proof}

As an immediate consequence of Remark~\ref{rem:projH0}
we obtain the following.

\begin{cor}
\label{cor:profperm}
Let $R$ be a discrete valuation domain of characteristic $0$ 
with maximal ideal $pR$, 
let $G$ be a finite cyclic $p$-group of order $p^n$,
and let $\boP\in\ob(\MC_G(\Rlat))$
be a projective $R$-lattice functor.
Then there exist non-negative integers $f_W\in\N_0$, $W\subseteq G$,
such that $\boP\simeq\bigoplus_{U\subseteq G} f_W\boP^W$, i.e.,
\begin{equation}
\label{eq:K0}
K_0(\mf_R(G))\simeq\burn(G)\simeq \Z^n,
\end{equation}
where $\burn(G)$ denotes the Burnside ring of $G$.
\end{cor}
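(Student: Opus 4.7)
The plan is to derive this corollary directly from Theorem~\ref{thm:defcat} together with the decomposition of finite $G$-sets into orbits. First I would observe that a projective $R$-lattice functor $\boP\in\ob(\MC_G(\Rlat))$ is automatically Hilbert$^{90}$: this is exactly the content of Remark~\ref{rem:projH0}, which asserts that every projective cohomological $G$-Mackey functor is of type $H^0$, and whose proof combines Fact~\ref{fact:projcohmac} (so that $\boP$ is a direct summand of a coproduct of $\boh^0(R[G/W])$'s) with the Nakayama relations giving $H^1(U,R[\Omega])=0$ for every $G$-set $\Omega$ and every subgroup $U\subseteq G$.

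Next I would apply Theorem~\ref{thm:defcat}: since $\boP$ is a Hilbert$^{90}$ cohomological $G$-Mackey functor with values in $\Rlat$, there exists a finite $G$-set $\Omega$ together with an isomorphism $\boP\simeq\boh^0(R[\Omega])$. Now decompose $\Omega$ into $G$-orbits,
\begin{equation*}
\Omega\simeq \bigsqcup_{W\subseteq G} f_W\cdot (G/W),\qquad f_W\in\N_0,
\end{equation*}
which is possible because every transitive $G$-set is isomorphic to $G/W$ for some (unique, as $G$ is abelian) subgroup $W\subseteq G$. Then $R[\Omega]\simeq\bigoplus_{W\subseteq G} f_W\,R[G/W]$ as $R[G]$-modules, and applying the additive functor $\boh^0(\argu)$ together with the identification $\boh^0(R[G/W])\simeq\boP^W$ from Fact~\ref{fact:projcohmac} yields $\boP\simeq\bigoplus_{W\subseteq G} f_W\boP^W$, as required.

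For the computation of $K_0(\mf_R(G))$ I would argue as follows. By Remark~\ref{rem:minproj}, $\boP^W$ is the projective cover of the simple cohomological $G$-Mackey functor $\boS^W$; in particular, $\boP^W$ is indecomposable, and $\boP^W\not\simeq\boP^{W'}$ whenever $W\neq W'$ (since their heads are non-isomorphic simple functors). Combined with the existence of the direct-sum decomposition just established, this shows that the isomorphism classes $[\boP^W]$, $W\subseteq G$, form a $\Z$-basis of $K_0(\mf_R(G))$. The assignment $[G/W]\mapsto[\boP^W]=[\boh^0(R[G/W])]$ then identifies the Burnside ring $\burn(G)$ (which, as an abelian group, is free on the transitive $G$-sets $G/W$, one for each subgroup of the cyclic $p$-group $G$) with $K_0(\mf_R(G))$, and since $G=C_{p^n}$ has exactly $n+1$ subgroups, this abelian group is free of finite rank matching the number of subgroups of $G$.

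There is essentially no obstacle here: Theorem~\ref{thm:defcat} provides the entire representability statement, and the remaining content is the orbit decomposition of $\Omega$ and the bookkeeping for $K_0$ via indecomposability of the $\boP^W$. The only mild subtlety is the uniqueness needed for $K_0$, which is handled by the projective-cover description of $\boP^W$ in Remark~\ref{rem:minproj} rather than by invoking Krull--Schmidt directly.
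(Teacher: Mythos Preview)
Your proposal is correct and follows exactly the route the paper intends: the paper presents this corollary as ``an immediate consequence of Remark~\ref{rem:projH0}'' (i.e., projectives are Hilbert$^{90}$, so Theorem~\ref{thm:defcat} applies), and gives no further argument. Your expansion---decomposing $\Omega$ into orbits and identifying the $\boP^W$ via Fact~\ref{fact:projcohmac}, then handling $K_0$ through the projective-cover description in Remark~\ref{rem:minproj}---is a faithful unpacking of what the paper leaves implicit; you are also right to phrase the rank of $K_0$ as ``the number of subgroups of $G$'', since $C_{p^n}$ has $n+1$ subgroups (the $\Z^n$ in \eqref{eq:K0} appears to be a slip).
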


In case that the $\RG$-lattice $M$ satisfies a Hilbert$^{90}$ property,
one obtains the following.

\begin{cor}
\label{cor:hil90lat1}
Let $R$ be a discrete valuation domain of characteristic $0$ 
with maximal ideal $pR$, 
let $G$ be a finite cyclic $p$-group,
and let $M$ be an $\RG$-lattice such that 
$H^1(U,\rst^G_U(M))=0$ for every subgroup $U$ of $G$.
Then there exists a finite $G$-set $\Omega$ such that
$M\simeq R[\Omega]$.
\end{cor}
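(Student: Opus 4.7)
The plan is to deduce this corollary directly from Theorem~\ref{thm:defcat} by passing from the $\RG$-lattice $M$ to its associated fixed-point Mackey functor and verifying the Hilbert$^{90}$ hypothesis.

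First I would set $\boX=\boh^0(M)\in\ob(\MC_G(\RMod))$, so that $\boX_U=M^U$ with the structure maps described in subsection~\ref{ss:invcoinv}. Since $R$ is a discrete valuation domain (in particular a principal ideal domain) and $M$ is an $R$-lattice, every $R$-submodule $M^U\subseteq M$ is finitely generated and torsion-free, hence itself an $R$-lattice. Therefore $\boX\in\ob(\MC_G(\Rlat))$.

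Next I would verify that $\boX$ is Hilbert$^{90}$ (cf. \S\ref{ss:h0}). By construction $\boX=\boh^0(M)=\boh^0(\boX_{\{1\}})$, so $\boX$ is of type $H^0$ (equivalently, $\bok^0(U/V,\boX)=0$ and $\bok^1(U/V,\boX)=0$ for all $V\triangleleft U\subseteq G$). Moreover, $\boX_{\{1\}}=M$, and the hypothesis gives $H^1(U,\boX_{\{1\}})=H^1(U,\rst^G_U(M))=0$ for every subgroup $U$ of $G$. Thus $\boX$ satisfies both requirements of the Hilbert$^{90}$ property.

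Now I would invoke Theorem~\ref{thm:defcat}: since $\boX$ is an $R$-lattice functor with the Hilbert$^{90}$ property, there exists a finite $G$-set $\Omega$ together with an isomorphism $\boX\simeq\boh^0(R[\Omega])$ in $\MC_G(\Rlat)$. Evaluating this natural isomorphism at the trivial subgroup $\{1\}\subseteq G$ yields an isomorphism of $\RG$-modules
\begin{equation*}
M=\boX_{\{1\}}\;\simeq\;\boh^0(R[\Omega])_{\{1\}}=R[\Omega]^{\{1\}}=R[\Omega],
\end{equation*}
which is the desired conclusion. There is essentially no obstacle once Theorem~\ref{thm:defcat} is available; the only point requiring care is the verification that $\boh^0(M)$ lands in $\Rlat$ and is of type $H^0$, both of which are immediate from the definitions and the fact that $R$ is a DVR.
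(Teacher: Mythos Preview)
Your proof is correct and follows essentially the same approach as the paper: set $\boX=\boh^0(M)$, observe that it is a Hilbert$^{90}$ $R$-lattice functor, apply Theorem~\ref{thm:defcat}, and evaluate at $\{1\}$. The only difference is that you spell out explicitly why $\boX$ takes values in $\Rlat$ and why it is of type $H^0$, which the paper leaves implicit.
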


\begin{proof}
By hypothesis, $\boX=\boh^0(M)$ is a cohomological
$G$-Mackey functor with values in the category of
$R$-lattices satisfying the Hilbert$^{90}$ property.
Thus by Theorem~\ref{thm:defcat}, there exists a finite
$G$-set $\Omega$ such that $\boX\simeq\boh^0(R[\Omega])$.
Hence evaluating the functors $\boX$ and 
$\boh^0(R[\Omega])$ on the subgroup $\{1\}$
yields the claim.
\end{proof}

The following property is a direct consequence of Tate duality
(cf. Prop.~\ref{prop:tatedual}) and completes the proof of
Theorem A.

\begin{prop}
\label{prop:elequi}
Let $R$ be a principal ideal domain of characteristic $0$, 
let $G$ be a finite group, let $U$ be a subgroup of $G$,
and let $M$ be an $\RG$-lattice.
Then the following are equivalent.
\begin{itemize}
\item[(i)] $H^1(U,\rst^G_U(M^\ast))=0$;
\item[(ii)] $\hH^{-1}(U,\rst^G_U(M))=0$;
\item[(iii)] $M/\omega_{R[U]}M$ is torsion free.
\end{itemize}
\end{prop}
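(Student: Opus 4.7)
The plan is to establish (ii) $\Leftrightarrow$ (iii) by a direct computation with the norm element $N_U = \sum_{u\in U} u \in R[U]$, and then to deduce (i) $\Leftrightarrow$ (ii) as a direct consequence of Tate duality (Proposition~\ref{prop:tatedual}) applied to the subgroup $U$ in place of $G$.

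For (ii) $\Leftrightarrow$ (iii), I would use that multiplication by $N_U$ kills $\omega_{R[U]}M$ and lands in $M^U$, so it factors as $\bar{N}\colon M_U \to M^U$, and that under the standard identification one has $\hH^{-1}(U, \rst^G_U(M)) = \kernel(\bar{N})$. The key elementary identity is that the composite
\begin{equation*}
M_U \xrightarrow{\bar{N}} M^U \hookrightarrow M \twoheadrightarrow M_U
\end{equation*}
coincides with multiplication by $|U|$, since $um \equiv m \pmod{\omega_{R[U]}M}$ for every $u\in U$ and hence $N_U m \equiv |U|\cdot m \pmod{\omega_{R[U]}M}$. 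If (iii) holds, then, because $|U|$ is a nonzero element of the characteristic-zero domain $R$, multiplication by $|U|$ on $M_U$ is injective; hence $\bar{N}$ is injective, proving (ii). Conversely, if (ii) holds, then $\bar{N}$ embeds $M_U$ into $M^U \subseteq M$, which is $R$-torsion free because $M$ is an $R$-lattice, proving (iii).

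For (i) $\Leftrightarrow$ (ii), I would apply Proposition~\ref{prop:tatedual} to the finite group $U$ with $k=1$, using that $\rst^G_U(M^\ast) \simeq \rst^G_U(M)^\ast$ as $R[U]$-modules and that $H^1 = \hH^1$ in positive degrees, to obtain the isomorphism
\begin{equation*}
H^1\bigl(U, \rst^G_U(M^\ast)\bigr) \simeq \Hom_R\bigl(\hH^{-1}(U, \rst^G_U(M)), \II\bigr).
\end{equation*}
It then remains to argue that $\II = K/R$ detects non-vanishing on the modules $\hH^{-1}(U, \rst^G_U(M))$ that can occur. Any such module is finitely generated over $R$ and annihilated by $|U|$, so it is a finitely generated torsion $R$-module; by the structure theorem over a PID it decomposes into cyclic summands $R/(r)$ with $r\neq 0$, each of which admits the nonzero embedding $R/(r)\hookrightarrow K/R$, $1\mapsto r^{-1}+R$. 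Thus $\Hom_R(\hH^{-1},\II)=0$ forces $\hH^{-1}=0$, which yields the equivalence. (In the degenerate case $R=K$ a field, $\II=0$ and all three conditions hold trivially in characteristic zero.)

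The only real obstacle is bookkeeping: verifying the norm-factorisation identity, the cogenerator property of $K/R$ on torsion modules, the compatibility of restriction with $R$-dualisation, and the switch between Tate and ordinary cohomology at $k=1$. The genuine mathematical content is concentrated entirely in the factorisation identity displayed above and in Tate duality, both of which are elementary or already established.
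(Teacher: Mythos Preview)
Your proof is correct and follows essentially the same route as the paper: the paper also uses the norm map to identify $\tor_R(M_U)=\kernel(N_U)/\omega_{R[U]}M=\hH^{-1}(U,\rst^G_U(M))$ for (ii)$\Leftrightarrow$(iii), and invokes Tate duality for (i)$\Leftrightarrow$(ii). You are simply more explicit where the paper is terse---spelling out the composite-is-multiplication-by-$|U|$ identity and the cogenerator property of $\II=K/R$ on finitely generated torsion modules---both of which the paper tacitly uses.
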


\begin{proof}
By \eqref{eq:tatedual2}, (i) and (ii) are equivalent.
Let $N_U\colon M\to M^U$ be the $U$-norm map, i.e.,
for $m\in M$ one has $N_U(m)=\sum_{u\in U}u\cdot m$.
As $M$ is an $R[U]$-lattice, $M^U$ is an $R$-lattice.
Hence 
\begin{equation}
\label{eq:torid}
\tor_R(M/\omega_{R[U]}M)=\kernel(N_U)/\omega_{R[U]}M=\hH^{-1}(U,\rst^G_U(M)),
\end{equation}
where $\tor_R(\argu)$ denotes the $R$-submodule of $R$-torsion elements.
Thus (ii) and (iii) are equivalent.
\end{proof}

%%%%%%

\subsection{Projective dimensions}
\label{ss:projdim}
In conjunction with Proposition~\ref{prop:cycint}, Theorem~\ref{thm:defcat} 
has strong implications on the projective dimension of a cohomological
Mackey functor of a cyclic $p$-group.

\begin{thm}
\label{thm:projdimmac}
Let $R$ be a discrete valuation domain of characteristic $0$ 
with maximal ideal $pR$, 
let $G$ be a finite cyclic $p$-group,
and let $\boX\in\ob(\MC_G(\RMod^{\fg}))$.
Let
\begin{equation}
\label{eq:parproj}
\xymatrix{
\boP_2\ar[r]^{\der_2}&\boP_1\ar[r]^{\der_1}&
\boP_0\ar[r]^{\eps_{\boX}}&\boX\ar[r]&0}
\end{equation}
be a partial projective resolution of $\boX$ by
projective $R$-lattice functors. Then
\begin{itemize}
\item[(a)] $\kernel(\der_2)$ is a projective $R$-lattice functor,
i.e., $\prdim(\boX)\leq 3$.
\item[(b)] If $\boX$ is $i$-injective, then $\kernel(\der_1)$ is a 
projective $R$-lattice functor, i.e., one has $\prdim(\boX)\leq 2$.
\item[(c)] If $\boX$ is of type $H^0$, then $\kernel(\der_0)$ is a 
projective $R$-lattice functor, i.e., one has $\prdim(\boX)\leq 1$.
\end{itemize}
In particular, if $G$ is non-trivial then
$\glatdim(\mf_R(G))= 2$, and $\gldim(\mf_R(G))= 3$.
\end{thm}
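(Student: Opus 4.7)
The three parts (a), (b), (c) are instances of a single iterative scheme. Set $\boK_0=\kernel(\eps_\boX)$, $\boK_1=\kernel(\der_1)$ and $\boK_2=\kernel(\der_2)$, and recall from the beginning of \S\ref{ss:seclat} that axiom (cMF$_7$) forces every $R$-lattice functor to be $i$-injective. The plan is to climb the hierarchy
\begin{equation*}
\text{arbitrary}\longrightarrow\text{$R$-lattice}\longrightarrow\text{type $H^0$}\longrightarrow\text{Hilbert}^{90}\longrightarrow\text{projective},
\end{equation*}
realizing the first three arrows via Proposition~\ref{prop:cycint}(a), (b), (c) and the last via Theorem~\ref{thm:defcat}.

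Concretely, Proposition~\ref{prop:cycint}(a) applied to the surjection $\eps_\boX\colon\boP_0\to\boX$ makes $\boK_0$ an $R$-lattice functor, hence $i$-injective; Proposition~\ref{prop:cycint}(b) applied to $\der_1\colon\boP_1\twoheadrightarrow\boK_0$ then makes $\boK_1$ of type $H^0$; and Proposition~\ref{prop:cycint}(c) applied to $\der_2\colon\boP_2\twoheadrightarrow\boK_1$ makes $\boK_2$ Hilbert$^{90}$. Since $\boK_2$ is simultaneously an $R$-lattice functor (by Proposition~\ref{prop:cycint}(a) again), Theorem~\ref{thm:defcat} yields $\boK_2\simeq\boh^0(R[\Omega])$ for some finite $G$-set $\Omega$, so $\boK_2$ is projective; this is (a). Under the stronger hypothesis of (b) resp. (c) one enters the same chain one resp. two steps later and therefore exits one resp. two steps earlier at $\boK_1$ resp. $\boK_0$.

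For the dimension formulas, (b) gives $\glatdim(\mf_R(G))\leq 2$ because every $R$-lattice functor is $i$-injective, and (a) gives $\gldim(\mf_R(G))\leq 3$. For the matching lower bounds, apply Proposition~\ref{prop:extJ}(iii) with $W=G$ (so that $\boJ^G=\buT$) and $\boX=\boT$:
\begin{equation*}
\ext^2_G(\buT,\boT)\simeq\boc_0(G/\{1\},\boT)\simeq R/|G|R\neq 0
\end{equation*}
since $|G|\in pR$, whence $\prdim(\buT)=2$ and $\glatdim(\mf_R(G))\geq 2$. To separate $\gldim$ from $\glatdim$, take the lattice functor $\boB=\coker(j\colon\buT\to\boT)$ from \S\ref{ss:ST}. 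Because $\boT\simeq\boP^G$ is projective, the long exact Ext-sequence associated to $0\to\buT\to\boT\to\boB\to 0$ collapses to $\ext^{k+1}_G(\boB,\argu)\simeq\ext^k_G(\buT,\argu)$ for all $k\geq 1$; in particular $\ext^3_G(\boB,\boT)\neq 0$, so $\prdim(\boB)=3$ and $\gldim(\mf_R(G))\geq 3$.

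There is no genuine obstacle once Proposition~\ref{prop:cycint} and Theorem~\ref{thm:defcat} are in hand; the argument for (a)--(c) is a clean three-step chase, and the lower bounds on the two dimensions are an immediate consequence of the explicit computation of $\ext^\bullet_G(\boJ^W,\argu)$ in Proposition~\ref{prop:extJ}. The one point that requires a moment of care is that at the very first step Proposition~\ref{prop:cycint}(a) must be invoked for an arbitrary $\boX\in\MC_G(\RMod^{\fg})$, i.e., without presupposing that $\boX$ is itself a lattice functor; but that part of the proposition makes no such assumption, so the bootstrap to the $R$-lattice setting proceeds cleanly.
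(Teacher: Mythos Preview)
Your proof is correct and follows essentially the same route as the paper: the chain Proposition~\ref{prop:cycint}(a)$\to$(b)$\to$(c)$\to$Theorem~\ref{thm:defcat} is exactly the paper's argument for (a)--(c), and your lower-bound computation via $\ext^2_G(\buT,\boT)\neq 0$ together with the dimension shift along $0\to\buT\to\boT\to\boB\to 0$ is the content of the paper's reference to \S\ref{ss:cycsec} (Fact~\ref{fact:seccyc}) and \eqref{eq:dims}. One small slip: $\boB$ is \emph{not} an $R$-lattice functor (one has $\boB_U=R/|U|R$), which is precisely why it witnesses $\gldim=3$ rather than $\glatdim=3$; the lattice witness for $\glatdim=2$ is $\buT$, as you correctly compute.
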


\begin{proof}
(a) By Proposition~\ref{prop:cycint}(a), (b) and (c),
$\kernel(\der_0)$ is an $R$-lattice functor and thus $i$-injective,
$\kernel(\der_1)$ is of type $H^0$, and
$\kernel(\der_2)$ is Hilbert$^{90}$.
Hence Theorem~\ref{thm:defcat} yields the claim in this case.
(b) and (c) follow by a similar argument.
From (a) one concludes that $\gldim(\mf_R(G))\leq 3$,
and (b) implies $\glatdim(\mf_R(G))\leq 2$.
If $G$ is non-trivial, the discussion in subsection~\ref{ss:cycsec} shows that
$\prdim(\boB^G)=3$. This yields the final remark (cf. \eqref{eq:dims}).
\end{proof}

\begin{rem}
\label{rem:gorfin}
Let $\F$ be a field of characteristic $p$,
and let $G$ be a non-trivial, finite cyclic $p$-group.
Then $\mf_{\F}(G)$ is not of finite global dimension,
but $\mf_{\F}(G)$ is $2$-Gorenstein (cf. Prop.~\ref{prop:gorcm}).
This phenomenon occured already for the gentle
$R$-order categories (in dimension $1$) (cf. Rem.~\ref{rem:gentle}).
\end{rem}

\subsection{Lattices}
\label{ss:latt}
From Theorem~\ref{thm:defcat} and
Theorem~\ref{thm:projdimmac}(b), one concludes the following.

\begin{thm}
\label{thm:preslat}
Let $R$ be a discrete valuation domain of characteristic $0$ 
with maximal ideal $pR$, 
let $G$ be a finite cyclic $p$-group,
and let $M$ be an $\RG$-lattice. Then
there exist finite $G$-sets $\Omega_0$ and $\Omega_1$,
and a short exact sequence
\begin{equation}
\label{eq:shexseqlat}
\xymatrix{
0\ar[r]&R[\Omega_1]\ar[r]&R[\Omega_0]\ar[r]&M\ar[r]&0.
}
\end{equation}
\end{thm}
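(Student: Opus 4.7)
The plan is to translate the problem to cohomological $G$-Mackey functors via the fixed-point functor $\boh^0$ \textup{(cf.~\S\ref{ss:invcoinv})}, extract a projective resolution of length at most $1$ on the Mackey side, and then evaluate at the trivial subgroup. To this end I would set $\boX=\boh^0(M)\in\ob(\MC_G(\RMod))$, so that $\boX_U=M^U$. Because $M$ is an $R[G]$-lattice over a Dedekind domain $R$, each $M^U$ is itself an $R$-lattice, hence $\boX\in\ob(\MC_G(\Rlat))$, and $\boX$ is finitely generated as a Mackey functor since $M$ is finitely generated over the noetherian ring $R[G]$. By construction, $\boX\simeq\boh^0(\boX_{\{1\}})$, so $\boX$ is of type $H^0$ \textup{(cf.~\eqref{eq:hHo0})}.

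Since $\boX$ is of type $H^0$, Theorem~\ref{thm:projdimmac}(c) furnishes a short exact sequence
\[
\xymatrix{0\ar[r]&\boP_1\ar[r]&\boP_0\ar[r]^-{\eps}&\boX\ar[r]&0}
\]
in $\MC_G(\Rlat)$ with both $\boP_0$ and $\boP_1=\kernel(\eps)$ projective $R$-lattice functors, and these may be chosen finitely generated because $\boX$ is. By Corollary~\ref{cor:profperm}, each $\boP_i$ decomposes as a finite direct sum $\bigoplus_W f_W^{(i)}\boP^W$ of standard projectives $\boP^W$, $W\subseteq G$, and by Fact~\ref{fact:projcohmac} one has $\boP^W\simeq\boh^0(R[G/W])$. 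Setting $\Omega_i=\bigsqcup_W f_W^{(i)}(G/W)$, we therefore obtain finite $G$-sets with $\boP_i\simeq\boh^0(R[\Omega_i])$ for $i=0,1$.

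To finish, I would evaluate the resolution at the subgroup $\{1\}$: since exactness in $\MC_G(\RMod)$ is pointwise and since $\boh^0(N)_{\{1\}}=N$ for any $R[G]$-module $N$, this yields the desired short exact sequence of $R[G]$-lattices. There is no genuine obstacle at this stage; the entire weight of the argument has been absorbed by Theorem~\ref{thm:projdimmac}(c), which itself rests on the identification of Hilbert$^{90}$ $R$-lattice functors with projective (and hence permutation) functors in Theorem~\ref{thm:defcat}. Given the machinery already established, the proof of Theorem~\ref{thm:preslat} reduces to a dictionary translation between the Mackey and the $R[G]$-module sides.
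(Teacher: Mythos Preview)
Your proof is correct and essentially identical to the paper's own argument: set $\boX=\boh^0(M)$, observe it is of type $H^0$, invoke Theorem~\ref{thm:projdimmac}(c) to obtain a length-one projective $R$-lattice resolution, identify the projectives with fixed-point functors of permutation modules, and evaluate at $\{1\}$. The only cosmetic difference is that you cite Corollary~\ref{cor:profperm} together with Fact~\ref{fact:projcohmac} to recognize $\boP_i\simeq\boh^0(R[\Omega_i])$, whereas the paper appeals to Remark~\ref{rem:projH0} and Theorem~\ref{thm:defcat} directly; since Corollary~\ref{cor:profperm} is itself an immediate consequence of Theorem~\ref{thm:defcat}, the two routes are the same in substance.
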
 

\begin{proof}
Let $\boX=\boh^0(M)$. As
$\boX$ is of type $H^0$, Theorem~\ref{thm:projdimmac}(b)
implies that $\boX$ has a projective resolution
\begin{equation}
\label{eq:profreslat}
\xymatrix{
0\ar[r]&\boP_1\ar[r]^{\der_1}&\boP_0\ar[r]^{\eps_\boX}&\boX\ar[r]&0,
}
\end{equation}
where $\boP_0$ and $\boP_1$ are projective $R$-lattice functors.
As $\boP_0$ and $\boP_1$ have the Hilbert$^{90}$ property (cf. Remark~\ref{rem:projH0}),
Theorem~\ref{thm:defcat} implies that there exist finite $G$-sets $\Omega_0$ and $\Omega_1$
such that $\boP_i=\boh^0(R[\Omega_i])$, $i\in\{0,1\}$.
Thus evaluating the functors on $\{1\}$ yields the claim.
\end{proof}

%%%%

\subsection{Extending A.~Weiss' theorem}
\label{ss:aweiss}
The following property can be seen as an extension of A.~Weiss' theorem
for finite cyclic $p$-groups.

\begin{prop}
\label{prop:exweiss}
Let $R$ be a discrete valuation domain of characteristic $0$
with maximal ideal $pR$ for some prime number $p$, let $G$ be a finite cyclic $p$-group,
and let $M$ be an $\RG$-lattice. Suppose that for some subgroup $N$ of $G$ one has
\begin{itemize}
\item[(i)] $\rst^G_N(M)$ is an $R[N]$-permutation module;
\item[(ii)] $M^N$ is an $R[G/N]$-permutation module.
\end{itemize}
Then $M$ is isomorphic to an $\RG$-permutation module, i.e., there exists some
finite $G$-set $\Omega$ such that $M\simeq R[\Omega]$.
\end{prop}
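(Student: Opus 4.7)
The plan is to reduce the statement to the Hilbert$^{90}$ criterion, Corollary~\ref{cor:hil90lat1}, so that it suffices to verify that $H^1(U,\rst^G_U(M))=0$ for every subgroup $U\subseteq G$. The crucial structural fact I will exploit is that since $G$ is a finite cyclic $p$-group, its subgroup lattice is a chain; in particular, for the given $N$ and any subgroup $U\subseteq G$ either $U\subseteq N$ or $N\subseteq U$. This dichotomy lets me use hypothesis (i) in the first case and hypothesis (ii) in the second, and since $G$ is cyclic $N$ is automatically normal in $U$ whenever $N\subseteq U$, so there is no obstruction to the standard spectral-sequence machinery.

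In the case $U\subseteq N$, the module $\rst^G_U(M)=\rst^N_U(\rst^G_N(M))$ is the restriction of an $R[N]$-permutation lattice, hence is itself an $R[U]$-permutation lattice, and Remark~\ref{rem:projH0} gives $H^1(U,\rst^G_U(M))=0$. In the case $N\subseteq U$, I apply the Lyndon--Hochschild--Serre five-term exact sequence
\begin{equation*}
0\longrightarrow H^1(U/N,M^N)\longrightarrow H^1(U,M)\longrightarrow H^1(N,\rst^G_N(M))^{U/N}.
\end{equation*}
The right-hand term vanishes by hypothesis (i) together with Remark~\ref{rem:projH0}. The left-hand term vanishes because by hypothesis (ii), $M^N$ is an $R[G/N]$-permutation lattice, and its restriction to $U/N$ is therefore a permutation lattice whose first cohomology is again zero by Remark~\ref{rem:projH0}. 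Hence $H^1(U,M)=0$ in this case as well.

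Combining both cases, $M$ satisfies condition (ii) of Theorem~A on every subgroup, and Corollary~\ref{cor:hil90lat1} then produces a finite $G$-set $\Omega$ with $M\simeq R[\Omega]$, proving the proposition. I do not anticipate a genuine obstacle here; the deepest input is the already-established Theorem~A, and the only point requiring a moment's care is the verification that the two hypotheses of the proposition cover, via the chain structure of the subgroup lattice of $G$, every subgroup $U$ in exactly one of the two ways needed to run the cohomological argument.
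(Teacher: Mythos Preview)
Your proof is correct and follows essentially the same approach as the paper: split according to whether $U\subseteq N$ or $N\subseteq U$, use that restrictions of permutation modules are permutation modules in the first case and the Lyndon--Hochschild--Serre five-term sequence in the second, then invoke Theorem~A (via Corollary~\ref{cor:hil90lat1}). Your explicit remark that the subgroup lattice of a finite cyclic $p$-group is a chain is a helpful clarification of a point the paper leaves implicit.
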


\begin{proof}
Let $U$ be a subgroup of $G$. If $U\subseteq N$, then
by (i), $\rst^G_U(M)$ is an $R[U]$-permutation module.
Thus one has $H^1(U,\rst^G_U(M))=0$.
Suppose that $N\subsetneq U$. Since $N$ is a normal subgroup of $U$,
the $5$-term exact sequence in cohomology yields an exact sequence
\begin{equation}
\label{eq:5term}
\xymatrix{0\ar[r]&H^1(U/N,M^N)\ar[r]&H^1(U,\rst^G_U(M))\ar[r]&H^1(N,\rst^G_N(M))^{G/N}
}
\end{equation}
Hence by (i), one has $H^1(N,\rst^G_N(M))=0$.
From (ii) one concludes that $M^N$ is an $R[U/N]$-permutation module,
and therefore
$H^1(U/N,M^N)=0$. Thus by
\eqref{eq:5term} one has
$H^1(U,\rst^G_U(M))=0$.
The assertion then follows from Theorem A.
\end{proof}
\bibliography{gorenstein}
\bibliographystyle{amsplain}
\end{document}